\documentclass[final]{siamltex}
\usepackage{amsmath}
\usepackage{mathtools}
\usepackage{amssymb}
\usepackage{amsfonts}
\usepackage{amsxtra}
\usepackage{amstext}
\usepackage{amsbsy}
\usepackage{amscd}
\usepackage{graphicx}
\usepackage{float}
\usepackage{cite}
\usepackage{lmodern}
\usepackage{xcolor}
\usepackage{srcltx} 
\usepackage{marginnote,slashbox}
\usepackage{rotating}
\definecolor{darkblue}{rgb}{0.0,0.0,0.6}
\definecolor{darkgreen}{rgb}{0.0,0.6,0.0}
\usepackage[pdftex,colorlinks=true,urlcolor=darkblue,citecolor=darkblue,linkcolor=darkblue]{hyperref}
\usepackage[utf8]{inputenc}      
\usepackage{booktabs}            
\usepackage{url}                 
\usepackage[T1]{fontenc}         
\usepackage{algorithmic}         

\usepackage{calc}
\usepackage[notcite,notref]{showkeys}

\numberwithin{table}{section}    
\numberwithin{figure}{section}   
\numberwithin{equation}{section} 

\setlength{\parindent}{0cm}
\setlength{\parskip}{1ex plus 0.5ex minus 0.5ex}
\setlength{\multlinegap}{0cm}

\usepackage{my_latex_commands}

\newtheorem{assumption}[theorem]{Assumption}
\newtheorem{remark}[theorem]{Remark}

\renewcommand{\bl}[1]{\textcolor{black}{#1}}

\newcommand{\maxlim}[1]{\max\nolimits{#1}}
\newcommand{\dy}{\delta y}
\newcommand{\du}{\delta u}
\newcommand{\dl}{\delta \ell}

\newcommand{\df}{\delta \varphi}
\newcommand{\dd}{\delta d}
\newcommand{\yy}{\bar y}
\newcommand{\uu}{\bar u}
\newcommand{\lu}{L^2(0,T;U)}
\renewcommand{\ll}{\bar \ell}
\newcommand{\lU}{L^2(0,T;U^\ast)}
\newcommand{\lH}{L^2(0,T;Z)}
\newcommand{\lV}{L^2(0,T;V)}
\newcommand{\lY}{L^2(0,T;Y^\ast)}
\newcommand{\ly}{L^2(0,T;Y)}
\newcommand{\lo}{L^2(\O)}
\newcommand{\hon}{H^1_0(\O)}
\newcommand{\hoon}{H^{-1}(\O)}
\newcommand{\ho}{H^1(\O)}
\newcommand{\hoo}{H^1(\O)^\ast}
\newcommand{\hy}{H^1_0(0,T;Y)}
\newcommand{\hY}{H^1_T(0,T;Y^\ast)}
\newcommand{\lly}{L^2(0,T;Y)}
\newcommand{\llx}{L^2(0,T;X)}
\newcommand{\dense}{\overset{d}{\embed}}
\renewcommand{\o}{\omega}
\newcommand{\V}{\mathcal{V}}
\newcommand{\doo}{\delta \o}
\renewcommand{\bo}{\bar \omega}

\newcommand{\hhyy}{H^1_0(0,T;H^1_0(\O))}
\newcommand{\llU}{L^2(0,T;\hoo)}
\newcommand{\llu}{L^2(0,T;H^{1}(\Omega))}
\newcommand{\llun}{L^2(0,T;H^{-1}(\Omega))}
\newcommand{\llz}{L^2(0,T; L^2(\Omega))}

\newcommand{\hhyn}{H^1_0(0,T;\lo)}
\newcommand{\F}{(\kappa \circ H)}

\begin{document}

\title{Strong Stationarity for the  control of viscous history-dependent Evolutionary VIs arising in Applications
}
\date{\today}
\author{Livia\ Betz\footnotemark[1]}
\renewcommand{\thefootnote}{\fnsymbol{footnote}}
\footnotetext[1]{Faculty of Mathematics, University of W\"urzburg,  Germany.}
\renewcommand{\thefootnote}{\arabic{footnote}}

\maketitle

 \normalsize
 
 \begin{abstract}
 This paper addresses  optimal control problems governed by history-dependent EVIs with viscosity. One of the prominent properties of the state system is its non-smooth nature, so that the application of standard adjoint calculus is excluded. We extend the results from \cite{st_coup} by showing that history-dependent EVIs with viscosity can be formulated as non-smooth ODEs in Hilbert space in a general setting. The Hadamard directional differentiability of the solution map is investigated. Based on previous results, this allows us to 
establish strong stationary conditions for two different viscous damage models with fatigue.
 \end{abstract}

\begin{keywords}
history-dependence, evolutionary VIs with viscosity, non-smooth optimization, strong stationarity, fatigue, viscous damage evolution \end{keywords}

\begin{AMS}
34G25, 34K35, 49J40, 49K21, 74R99
\end{AMS}
\section{Introduction}\label{sec:i}

The study of history-dependent evolutionary variational inequalities (EVIs) has attracted much attention lately, see e.g.\,\cite{mig2021, mig_bai} (frictional contact), \cite{mig_sof} (sweeping processes) and the references therein. Most of the papers on this topic address the existence and the regularity  of solutions. For a comprehensive study of history-dependent EVIs \textit{with viscosity}  we refer to \cite[Chp.\,4.4]{sofonea} where the applications focus on total slip-dependent frictional contact problems involving viscoelastic materials \cite[Chp.\,10.4]{sofonea}.

When it comes to the \textit{optimal control} of history-dependent  variational inequalities,  there are only a few papers available, most of which are concerned with the existence of optimal solutions, cf.\,for instance \cite{oc_h_d, oc_sof}. 
Because of   the non-smooth nature of EVIs, the derivation of optimality conditions is a challenging issue. Indeed, the literature on differentiability properties of EVIs is rather scarce, see \cite{conical, cc, qevi} (EVIs of obstacle type) and \cite{susu_dam, st_coup, hal} (viscous EVIs). 
We point out that these contributions do not take a history operator into account, except \cite{hal}, where a concrete application is considered.
To the best of our knowledge,  the sensitivity analysis of viscous history-dependent EVIs in a general framework has not been examined so far, let alone  the \textit{strong stationarity} for the control thereof.

This paper aims at addressing this particular aspect. We establish strong stationary optimality conditions for the control of two damage models with fatigue. Both these models fall into the category of viscous history-dependent EVIs. In a general framework, this type of evolution is described as follows
\begin{equation}\label{evi}\tag{EVI}
 R(\HH(y)(t),\eta)-R(\HH(y)(t),\dot y(t))+\dual{\V\dot y(t)}{\eta-\dot y(t)}_Y  \geq \dual{g(y(t),\ell(t))}{\eta-\dot y(t)}_Y \quad \forall\, \eta \in Y, 
\end{equation}\text{\ae}$(0,T)$, where $Y$ is a Hilbert space, $\HH$ is the history operator and $\VV$ denotes the viscosity. In the present paper, the dissipation $R$ may take infinite values and $g$ is a directionally differentiable mapping. The precise assumptions on the data are stated in Assumption \ref{assu:st} below. 
The existence of solutions in a slightly less general framework has already been addressed in \cite[Chp.\,4.4]{sofonea}. In this paper we go one step further, by formulating \eqref{evi} as a non-smooth ODE in Hilbert space. This facilitates the   investigation of the directional differentiability of the solution map associated to \eqref{evi}. By resorting to previous findings \cite{st_coup}, we are then able  to establish strong stationarity for the control of the two applications mentioned above.

From the point of view of optimal control, the essential feature of the problem under consideration is that it has a \textit{non-smooth} character, so that the standard methods for the derivation of qualified optimality conditions are not applicable here. 
The key novelties of the present paper are:
\begin{itemize}
\item the equivalence of \eqref{evi} to a {concrete non-smooth ODE} in Hilbert space; in this context we give an explicit formula for the underlying non-smooth non-linearity 
\item sensitivity analysis for viscous \textit{history-dependent}  evolutionary VIs in a general framework 
\item optimal control for damage models with fatigue  in terms of {strong stationarity }
\end{itemize}

Deriving necessary optimality conditions is a challenging issue even in finite dimensions, where a special attention is given to MPCCs  (mathematical programs with complementarity constraints).
In \cite{ScheelScholtes2000} a detailed overview of various optimality conditions of different strength was introduced, see also \cite{HintermuellerKopacka2008:1} for the infinite-dimensional case. The  most rigorous stationarity concept is strong stationarity.
Roughly speaking, the strong stationarity conditions involve an optimality system, which is equivalent to the purely primal conditions saying that the directional derivative of the reduced objective in feasible directions is nonnegative (which is referred to as B stationarity).

When it comes to establishing optimality conditions for the control of EVIs (and of non-smooth processes in general), most of the authors resort to smoothening procedures. {This approach is meanwhile standard  \cite{Barbu:1981:NCD} when dealing with the control of non-smooth evolutions see e.g.\,\cite{Barbu:1981:NCD, barbu84, tiba, ItoKunisch2008:1, tiba_hyp, barbu_tiba} and the references therein.
The optimality systems derived in this way are of intermediate strength and are not expected to be of strong stationary type, since one always loses information when passing to the limit in the regularization scheme see e.g.\,\cite[Rem. 3.11]{mcrf} and \cite[Subsec 7.2]{paper}. Thus, proving strong stationarity for the optimal control of  non-smooth  problems requires direct approaches, which employ the limited differentiability properties of the control-to-state map. 
 Based on the pioneering work  \cite{mp76} (strong stationarity  for  optimal control of elliptic VIs of obstacle type), most of them focus on elliptic VIs, see e.g.\,\cite{mp84, e_qvi, DelosReyes-Meyer} and the references therein. Regarding strong stationarity for  optimal control of non-smooth evolution processes, the literature is very scarce and the only  papers known to the author addressing this issue so far are \cite{brok_ch, qevi, st_coup, cc} (EVIs) and \cite{paper, st_coup, frac} (time-dependent PDEs/ODEs). We point out that, in contrast to our problem, all the above mentioned contributions on the topic of strong stationarity for EVIs do not take a history operator into account.

Let us give an overview of the main contributions in this paper. After introducing the notation, we recall in section \ref{sec:pres} a result from \cite{st_coup} concerning strong stationarity  for the optimal control of non-smooth coupled systems. 
Together  with our main finding from section \ref{sec:evi}, this will  allow us  to establish strong stationarity for  two concrete applications (section \ref{sec:dm}).

In section \ref{sec:evi}, we show that  viscous history-dependent EVIs are non-smooth ODEs in Hilbert space (Theorem \ref{thm:ode}). Here we extend the results from the previous work \cite{st_coup}, where such a characterization was introduced for viscous EVIs which do not involve history.
We give an explicit formula for the non-smooth non-linearity in the ODE in terms of a projection operator (Definitions \ref{proj}, \ref{def:fp}). By contrast to \cite{st_coup}, the non-smoothness in the present paper has two arguments instead of one.  Its concrete description provides multiple advantages. Firstly, it simplifies the  solvability theory and allows us to easily examine the existence and the regularity of solutions for discontinuous right-hand sides by means of a classical fix point argument (cf.\,also Remark \ref{p}). Secondly, the characterization of \eqref{evi} in terms of a non-smooth ODE  plays an essential role in the context of sensitivity analysis and optimal control. In fact, with the explicit formulation of the non-smoothness at hand, we can state conditions on the  projection operator such that the  Hadamard \textit{directional differentiability} of the solution map to \eqref{evi} is guaranteed (Theorem \ref{thm:ode_dir}).

Section  \ref{sec:dm} focuses on proving strong stationarity for the optimal control of two viscous  gradient damage models with \textit{fatigue}. Here we are concerned with the application of the above mentioned  results. We first employ the main findings from section \ref{sec:evi} to show that these concrete applications can be rewritten as non-smooth ODEs, after which we make use of the result from section \ref{sec:pres} to derive strong stationary optimality conditions.
In subsection \ref{sec:0}, the viscosity is expressed in terms of the $H^1_0$ norm and it describes the evolution of a single damage variable. In subsection \ref{sec:1}, a penalization approach is employed such that the model becomes two-field. This allows us to work with $L^2$ viscosity. It  has the advantage that the  non-smoothness appearing in the ODE is then expressed by means of the Nemytskii operator associated to $\max(\cdot,0)$.
This is not the case in the single-field model, where the underlying non-smooth function features the projection onto a subset of $\hoon$. As we will see, the more accurate description of the damage evolution in the two-field model carries over to the associated directional differentiability and the strong stationarity conditions.


\subsection*{Notation}
Throughout the paper, $T > 0$ is a fixed final time. If $X$ and $Y$ are linear normed spaces, then the space of linear and bounded operators from 
$X$ to $Y$ is denoted by $\LL(X,Y)$, and $X \overset{d}{\embed} Y$ means that $X$ is \bl{densely embedded} in $Y$.
The dual space of  $X$ will be denoted by $X^*$, except the dual of the space ${H_0^1(\O)}$ which is denoted by $\hoon$. For the dual pairing between $X$ and $X^*$
we write $\dual{.}{.}_X$.  The \bl{closed} ball in $X$ around $x \in X$ with radius $\alpha >0$ is denoted by $B_X(x,\alpha)$. \bl{If $X$ is a Hilbert space, we write $(\cdot,\cdot)_X$ for the associated scalar product.}
The  following abbreviations will be used throughout the paper:
\begin{equation*}
\begin{aligned}
H^{1}_0(0,T;X)&:=\{z\in H^{1}(0,T;X):z(0)=0\},
\\H^{1}_T(0,T;X)&:=\{z\in H^{1}(0,T;X):z(T)=0\},
\end{aligned}
\end{equation*}where $X$ is a Banach space.
For the polar cone of a set $ M \subset X$ we use the notation $M^\circ:=\{x^\ast \in X^\ast : \dual{x^\ast}{x}_X \leq 0 \quad \forall\, x \in M\}.$ By  $\raisebox{3pt}{$\chi$}_{M}$ we denote the characteristic function associated to the set $M$. Given $x \in X^\ast$, we denote its annihilator by $ [x]^{\perp}:=\{\mu \in X: \dual{x}{ \mu}_ {X}=0 \} $. 
Derivatives w.r.t.\ time (weak derivatives of vector-valued functions) are frequently denoted by a dot.
The symbol $\partial f$ stands for the convex subdifferential and by $\dom(f)$ we denote the domain of the  functional $f:X \to (-\infty,\infty]$, see e.g.\ \cite{rockaf}.
For a mapping $\RR:X \times Y \to (-\infty,\infty]$, 
the set $\partial_{2}\RR(x,y) \subset Y^\ast$ describes the convex subdifferential of the functional $\RR(x,\cdot):Y \to (-\infty,\infty]$ in $y$. 
The Nemystkii-operators associated with the mappings considered in this paper will be described by the same symbol, even when considered with different domains and ranges. By $\max(\cdot, 0)$ we denote the positive part function, while $\max'(x; h)$ indicates its directional derivative  in the point $x$ in direction $h$. Similarly,  $\min(\cdot, 0)$ stands for the negative part function.
With a little abuse of notation, we use in the paper the Laplace symbol for the operator $\Delta: \ho \to \hoo$ defined by 
\begin{equation*}
 \dual{\Delta \eta}{\psi}_{\ho} := - \int_\Omega \nabla \eta  \nabla \psi \,dx \quad \forall\,
 \psi \in \ho.
\end{equation*}The same symbol is used for the Laplace operator with domain $\hon$ and range $\hoon$.

 \section{A  strong stationarity result}\label{sec:pres}
In this section we recall a  known result \cite{st_coup} concerning the optimal control of non-smooth coupled systems (Theorem \ref{thm:strongstat} below). This states the strong stationarity optimality conditions for this particular type of state system and it will play an essential role later on in section \ref{sec:dm}.
 \begin{equation}\label{eq:optprob}
 \left.
 \begin{aligned}
  \min_{\ell \in L^2(0,T;V)} \quad & J(y,u,\ell)\\
  \text{s.t.} \quad & 
  \begin{aligned}[t]
   &\dot y(t) = f(\Phi(y,u)(t)) \quad \text{\ae } (0,T),\quad y(0) = 0,\\
     &\Psi(y,u)(t)=\ell(t)  \quad  \text{\ae } (0,T),\\
    & y \in H^1(0,T;Y), \quad u \in L^2(0,T;U).
  \end{aligned} 
 \end{aligned}
 \quad \right\}
\end{equation}
We begin by gathering all the necessary assumptions that are needed for the main result in Theorem \ref{thm:strongstat} to be true.
\begin{assumption}{\cite[Assumption 2.1]{st_coup}}\label{assu:stand}
For the quantities in \eqref{eq:optprob} we require the following:
 \begin{enumerate}
  \item\label{it:stand1}$V$, $Y$, and $U$ are  real reflexive Banach spaces, such that $V \dense U^\ast$.
   \item\label{it:stand3} The mappings $\Phi: L^2(0,T;Y \times U) \to L^2(0,T;Y^\ast)$ and $\Psi: L^2(0,T;Y \times U) \to L^2(0,T;U^\ast)$ are G\^ateaux-differentiable operators. 
    \item\label{it:stand2}
The non-smooth function  $f: Y^\ast \to Y$ is assumed to be Lipschitz continuous and directionally differentiable, i.e., 
  \begin{equation*}\label{eq:fdiff}
   \Big\|\frac{f(x + \tau \,h) - f(x)}{\tau} - f'(x;h)\Big\|_{Y} \stackrel{\tau \searrow 0}{\longrightarrow} 0 \quad \forall \, x,h \in Y^\ast.
  \end{equation*}
 \vspace{-0.3cm} \item\label{it:stand4} 
  The objective $J: L^2(0,T;Y)\times \lu \times L^2(0,T;V) \to \R$ 
  is Fr\'echet-differentiable. 
 \end{enumerate}
\end{assumption}
\begin{remark}\label{rem}
Note that in contrast to \cite[Assump.\,2.1.2]{st_coup}, we work  with operators $\Phi$ and $\Psi$ mapping between abstract function spaces, so that these operators are not necessary Nemytskii operators as in \cite[Sec.\,2]{st_coup}. This allows us to apply the findings in this section to applications which feature e.g.\,integral operators such as history operators, cf.\,Assumption \ref{assu:standd}.\ref{it:stand1d} below. A short inspection of \cite[Sec.\,2]{st_coup} shows that the entire analysis can be carried on in the same manner for our slightly more general setting without affecting the main result in Theorem \ref{thm:strongstat} below. 
\end{remark}

As in \cite[Sec.\,2]{st_coup}, the properties we need from the control-to-state map in order to prove the main result (Theorem \ref{thm:strongstat})  are just assumed to be true. To keep the {demonstration} concise, we do not discuss the unique solvability of the state system {nor} its differentiability properties.
These issues will be addressed in detail for the applications considered in section  \ref{sec:dm} below.
 \begin{assumption}{\cite[Assumption 2.3]{st_coup}}[Control-to-state operator]\label{assu:s_diff}
 \begin{enumerate} \item\label{it:s1}
 Throughout this section, we assume that  for every $\ell \in \lV$, the  state equation \begin{equation}\label{eq:pde}\left.
  \begin{aligned}
   &\dot y= f(\Phi(y,u)) \quad \text{\ae } (0,T),\ y(0) = 0,\\
     &\Psi(y,u)=\ell  \quad  \text{\ae } (0,T)
  \end{aligned} \quad \right\}
\end{equation}admits a unique solution $(y,u) \in H^1_0(0,T;Y) \times L^2(0,T;U) $ 
and denote the associated solution operator by 
$$\SS:L^2(0,T;V) \ni \ell \mapsto (y,u) \in H^1_0(0,T;Y) \times L^2(0,T;U).$$
 \item\label{it:s2} \bl{The mapping $\SS:L^2(0,T;V) \to \ly \times \lu $ is directionally differentiable}, i.e.,
  \begin{equation*}
   \Big\|\frac{\SS(\bl{\ell} + \tau \,\dl) - \SS(\bl{\ell})}{\tau} - \SS'(\bl{\ell};\dl)\Big\|_{\ly \times \lu } \stackrel{\tau \searrow 0}{\longrightarrow} 0 \quad \forall \ \bl{\ell,}\dl \in L^2(0,T;V).
  \end{equation*}
Moreover, we suppose that for any \bl{$\ell,\dl \in L^2(0,T;V)$}, the pair $(\delta y, \delta u):= \SS'(\bl{\ell};\dl) \in H^1_0(0,T;Y) \times L^2(0,T;U)$ is  the unique solution of \begin{subequations}\label{eq:pde_lin} \begin{gather}
   \dot \dy = f'\big(\Phi( y,u);\Phi'( y,u)(\dy,\du)\big) \  \text{\ae } (0,T),\ \delta y(0) = 0,\label{eq:pde1}\\
     \Psi'(  y,u)(\dy,\du)=\dl  \quad  \text{\ae } (0,T),\label{eq:pde2}
\end{gather}\end{subequations}where we abbreviate $\bl{( y, u)}:=\SS(\bl{\ell})$.

 \item\label{it:s3}  \bl{For any $\ell \in \lV$, there exists a  constant   $K>0$ so that}
 \begin{equation*}\label{eq:est}
\|\SS'(\bar \ell;\dl)\|_{\lly \times \lu} \leq K\, \|\dl\|_{L^2(0,T;U^\ast)} \quad \forall\, \dl \in \lV.\end{equation*}
\bl{If  $(\hat \dy, \hat \du) \in \hy \times \lu$ solves \eqref{eq:pde_lin} \bl{with} r.h.s.\ $\hat \dl \in \lU$ and if there exists a sequence $\{\dl_n\}_n \subset \lV$ with $\dl_n \to \hat \dl $ in $\lU$, then 
$\SS'(\bl{\ell};\dl_n) \to (\hat \dy, \hat \du)$ in $\lly \times \lu$.
}
 \end{enumerate}
\end{assumption}


Throughout this section one tacitly assumes that Assumptions \ref{assu:stand} and  \ref{assu:s_diff}  always hold true, without mentioning them everytime.

\begin{lemma}{\cite[Lemma 2.5]{st_coup}}[B-stationarity] If $\bar \ell \in L^2(0,T;V)$ is locally optimal for \eqref{eq:optprob}, then there holds
 \begin{equation}\label{eq:vi}
  \partial_{(y,u)} J(\SS(\bar \ell),\bar \ell )\SS'(\bar \ell;\dl)+\partial_{\ell} J(\SS(\bar \ell),\bar \ell ) \dl \geq 0   \quad \forall\, \dl\in L^2(0,T;V).
 \end{equation}
\end{lemma}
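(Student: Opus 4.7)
The plan is to reduce the problem to the standard first-order necessary condition for a locally optimal point by working with the reduced objective
\[
j : L^2(0,T;V) \to \R, \qquad j(\ell) := J(\SS(\ell), \ell).
\]
Local optimality of $\bar \ell$ for \eqref{eq:optprob} is equivalent to $\bar \ell$ being a local minimizer of $j$, so for every $\delta \ell \in L^2(0,T;V)$ and every sufficiently small $\tau > 0$ we have the difference quotient inequality
\[
\frac{j(\bar \ell + \tau\,\delta \ell) - j(\bar\ell)}{\tau} \geq 0.
\]
It then suffices to pass to the limit $\tau \searrow 0$ and identify the limit as the left-hand side of \eqref{eq:vi}.

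The key step is a chain rule for directional derivatives. Write $(\yy,\uu) := \SS(\bar\ell)$ and $(y_\tau,u_\tau) := \SS(\bar\ell + \tau\,\delta\ell)$. By Assumption \ref{assu:s_diff}.\ref{it:s2},
\[
\frac{(y_\tau,u_\tau) - (\yy,\uu)}{\tau} \longrightarrow \SS'(\bar\ell;\delta\ell) \quad \text{in } \ly \times \lu \text{ as } \tau \searrow 0.
\]
Since $J : \ly \times \lu \times \lV \to \R$ is Fr\'echet differentiable by Assumption \ref{assu:stand}.\ref{it:stand4}, a first-order Taylor expansion at $(\yy,\uu,\bar\ell)$ gives
\[
\frac{j(\bar\ell + \tau\,\delta\ell) - j(\bar\ell)}{\tau}
= \partial_{(y,u)}J(\yy,\uu,\bar\ell)\,\frac{(y_\tau,u_\tau)-(\yy,\uu)}{\tau} + \partial_\ell J(\yy,\uu,\bar\ell)\,\delta\ell + o(1),
\]
where the $o(1)$ term comes from the remainder of Fr\'echet differentiability together with the boundedness of the difference quotient in $\ly \times \lu$. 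Passing to the limit using continuity of the partial derivatives and the convergence above yields the directional derivative
\[
j'(\bar\ell;\delta\ell) = \partial_{(y,u)}J(\yy,\uu,\bar\ell)\,\SS'(\bar\ell;\delta\ell) + \partial_\ell J(\yy,\uu,\bar\ell)\,\delta\ell,
\]
and the nonnegativity of the difference quotient transfers to $j'(\bar\ell;\delta\ell) \geq 0$, which is exactly \eqref{eq:vi}.

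Essentially no obstacle arises here: the only subtlety is making sure the chain rule is justified, and this is immediate because $J$ is Fr\'echet (not merely G\^ateaux) differentiable and the convergence of difference quotients of $\SS$ takes place in the full space $\ly \times \lu$ on which the partial derivatives of $J$ act. In particular, no Hadamard-type reinforcement of the directional derivative of $\SS$ is needed, since $J$ absorbs this role via its Fr\'echet differentiability. The argument does not exploit the specific form of $f$, $\Phi$, $\Psi$ beyond what is recorded in Assumptions \ref{assu:stand} and \ref{assu:s_diff}, which is consistent with this being a purely abstract B-stationarity statement.
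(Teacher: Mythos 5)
Your proof is correct and follows the standard argument that the cited reference uses for this lemma: reduce to the nonnegativity of difference quotients of the reduced objective $j(\ell)=J(\SS(\ell),\ell)$ at a local minimizer, and pass to the limit via the chain rule, which is justified precisely because $J$ is Fr\'echet differentiable and the difference quotients of $\SS$ converge in $\ly\times\lu$ (so the remainder is $o(\tau)$ uniformly along the bounded family of quotients). The only cosmetic point is that no ``continuity of the partial derivatives'' in the base point is needed or assumed --- only the boundedness of the linear functionals $\partial_{(y,u)}J$ and $\partial_\ell J$ at the fixed point $(\SS(\bar\ell),\bar\ell)$, which is exactly what your limit passage actually uses.
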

\begin{assumption}{\cite[Assumption 2.6]{st_coup}}\label{assu:surj}
For any local optimum $\bar \ell$ of \eqref{eq:optprob}, we  assume that \bl{$\rg(\partial_u \Phi(\bar y,\bar u))\dense \lY$,} where $(\bar y,\bar u):=\SS(\ll)$.
\end{assumption}

\begin{assumption}{\cite[Assumption 2.9]{st_coup}}\label{assu:reg}
For any local optimum $\bar \ell$ of \eqref{eq:optprob}, we assume that there exists $\lambda\in \ly$ so that $$ - \partial_u \Phi(\bar y,\bar u)^{\ast}  \lambda= \partial_u J(\bar y, \bar u, \ll)+\partial_u  \Psi (\bar y, \bar u)^*  \partial_\ell J(\bar y, \bar u, \ll) ,  $$where $(\bar y,\uu) : = \SS(\ll)$. \end{assumption}
 
 \begin{theorem}{\cite[Theorem 2.11]{st_coup}}[Strong Stationarity]\label{thm:strongstat} Suppose that Assumptions  \ref{assu:surj} and  \ref{assu:reg} are  satisfied. 
 Let $\bar \ell \in \lV$ be locally optimal for \eqref{eq:optprob} with associated state $(\bar y,\uu) : = \SS(\ll)$. 
 Then, there exist unique adjoint states
 \begin{equation*}
  \xi \in \hY \quad \text{and} \quad w \in \lu 
 \end{equation*}  
 and a unique multiplier $\lambda \in L^{2}(0,T;Y)$ such that the following system is satisfied 
 \begin{subequations}\label{eq:strongstat}
 \begin{gather}
  -\dot \xi- \partial_y  \Phi (\bar y, \bar u)^* \lambda+\partial_y  \Psi (\bar y, \bar u)^* w = \partial_y J(\bar y, \bar u, \ll) \  \text{ in  }\lY, \quad \xi(T) = 0, \label{eq:adjoint1}\\[1mm]
  - \partial_u  \Phi (\bar y, \bar u)^* \lambda +  \partial_u  \Psi (\bar y, \bar u)^* w= \partial_u J(\bar y, \bar u, \ll) \  \text{ in  }\lU, \label{eq:adjoint2}\\[1mm]
  \dual{\xi(t)}{f'(\Phi (\bar y,\bar u)(t) ; v)}_{Y}  \geq  \dual{\lambda(t)}{v}_{Y^\ast} 
  \quad \forall\, v\in Y^\ast, \;\text{a.e.\ in  }  \, (0,T), \label{eq:signcond} \\[1mm]
w+ \partial_{\ell} J(\bar y, \bar u, \ll)= 0 \  \text{ in  }\lu. \label{eq:grad}
 \end{gather} 
 \end{subequations}
\end{theorem}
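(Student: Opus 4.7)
The plan is to pass from primal B-stationarity (Lemma 2.5) to the dual system \eqref{eq:strongstat} via adjoint calculus. I would construct the three multipliers directly. Set $w := -\partial_\ell J(\bar y,\bar u,\bar\ell) \in L^2(0,T;U)$, which makes \eqref{eq:grad} trivial. Take $\lambda \in L^2(0,T;Y)$ granted by Assumption \ref{assu:reg}; substituting $\partial_u\Psi(\bar y,\bar u)^*\partial_\ell J = -\partial_u\Psi(\bar y,\bar u)^* w$ into the defining identity of $\lambda$ yields \eqref{eq:adjoint2} at once. Finally, define $\xi \in H^1_T(0,T;Y^*)$ as the unique solution of the linear backward Cauchy problem \eqref{eq:adjoint1}, which exists because its right-hand side $\partial_y J + \partial_y\Phi(\bar y,\bar u)^*\lambda - \partial_y\Psi(\bar y,\bar u)^* w$ lies in $L^2(0,T;Y^*)$ and the equation is a standard linear backward ODE in Hilbert space.

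With these multipliers fixed, B-stationarity reads $\int_0^T [\partial_y J\cdot\delta y + \partial_u J\cdot\delta u + \partial_\ell J\cdot\delta\ell]\,dt \geq 0$ for every $\delta\ell \in L^2(0,T;V)$ and $(\delta y,\delta u) := \SS'(\bar\ell;\delta\ell)$. I would substitute $\partial_y J$ and $\partial_u J$ via \eqref{eq:adjoint1}--\eqref{eq:adjoint2}, integrate the term $-\dual{\dot\xi}{\delta y}_{Y^\ast}$ by parts using $\delta y(0)=0$ and $\xi(T)=0$, and then insert $\dot{\delta y} = f'(\Phi(\bar y,\bar u);\Phi'(\bar y,\bar u)(\delta y,\delta u))$ from \eqref{eq:pde1} together with $\Psi'(\bar y,\bar u)(\delta y,\delta u) = \delta\ell$ from \eqref{eq:pde2}. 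The terms containing $w$ and $\partial_\ell J$ cancel through $w+\partial_\ell J = 0$, and the reduced inequality becomes
\begin{equation*}
\int_0^T \Big[\dual{\xi(t)}{f'(\Phi(\bar y,\bar u)(t);\Phi'(\bar y,\bar u)(\delta y,\delta u)(t))}_{Y} - \dual{\lambda(t)}{\Phi'(\bar y,\bar u)(\delta y,\delta u)(t)}_{Y^\ast}\Big]\,dt \geq 0,
\end{equation*}
valid for every $\delta\ell \in L^2(0,T;V)$.

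The main obstacle is converting this integrated inequality into the pointwise sign condition \eqref{eq:signcond}; this is where the strong-stationarity-specific assumptions enter nontrivially. The argument has two stages. First, by Assumption \ref{assu:surj} the range of $\partial_u\Phi(\bar y,\bar u)$ is dense in $L^2(0,T;Y^*)$, so for any target $v \in L^2(0,T;Y^*)$ there exists a sequence $\delta\ell_n \in L^2(0,T;V)$ whose associated $\Phi'(\bar y,\bar u)(\delta y_n,\delta u_n)$ converges to $v$ in $L^2(0,T;Y^*)$; the continuity clause in Assumption \ref{assu:s_diff}.\ref{it:s3} together with the Lipschitz continuity of $f$ (which transfers to $f'(\Phi(\bar y,\bar u);\cdot)$) then permits passing to the limit, extending the reduced inequality to every $v \in L^2(0,T;Y^*)$. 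Second, a standard pointwise-a.e.\ Lebesgue localization, testing with $v = \raisebox{3pt}{$\chi$}_{E} v_0$ for arbitrary measurable $E\subset(0,T)$ and $v_0 \in Y^*$ and exploiting the positive $1$-homogeneity of $v_0 \mapsto f'(\Phi(\bar y,\bar u)(t);v_0)$, yields \eqref{eq:signcond} for almost every $t$. Uniqueness of $(\xi,w,\lambda)$ follows routinely: $w$ is pinned down by \eqref{eq:grad}, $\lambda$ by the injectivity of $\partial_u\Phi(\bar y,\bar u)^*$ implied by Assumption \ref{assu:surj}, and $\xi$ by linearity of the backward ODE.
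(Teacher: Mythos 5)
The paper offers no proof of this theorem---it is quoted verbatim from \cite[Theorem~2.11]{st_coup} (cf.\ Remark~\ref{rem})---and your proposal reconstructs essentially the argument of that reference: multipliers built from \eqref{eq:grad}, Assumption~\ref{assu:reg} and the backward adjoint equation \eqref{eq:adjoint1}, B-stationarity reduced via integration by parts and the linearized system \eqref{eq:pde_lin}, the sign condition \eqref{eq:signcond} obtained by densely realizing arbitrary directions $v\in\lY$ through Assumptions~\ref{assu:surj} and \ref{assu:s_diff}.\ref{it:s3} followed by a.e.\ localization, and uniqueness from \eqref{eq:grad}, the injectivity of $\partial_u\Phi(\bar y,\bar u)^*$ and the backward ODE. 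The only point stated more loosely than in \cite{st_coup} is the density of the achievable inner directions $\Phi'(\bar y,\bar u)(\dy,\du)$ in $\lY$: since $\dy$ is coupled to $\du$ through the linearized ODE, density of $\rg(\partial_u\Phi(\bar y,\bar u))$ alone does not immediately give it, and one additionally needs the Gronwall-type continuity of $\du\mapsto\dy$ together with the approximation clause of Assumption~\ref{assu:s_diff}.\ref{it:s3}---but these are exactly the ingredients you invoke, so the gap is one of detail rather than of substance.
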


\begin{theorem}{\cite[Theorem 2.13]{st_coup}}[Equivalence between B- and strong stationarity]\label{thm:equiv_B_strong} 
Assume that $\bar \ell \in L^2(0,T;V)$ together with its states $(\bar y, \bar u) \in \hy \times \lu$, some 
 adjoint states $(\xi,w) \in \hY \times \lu$, and a multiplier $\lambda \in \ly$ 
 satisfy the optimality system \eqref{eq:adjoint1}--\eqref{eq:grad}.
 Then, it also satisfies the variational inequality  \eqref{eq:vi}.  If \bl{Assumptions \ref{assu:surj} and \ref{assu:reg}}  hold true,  \eqref{eq:vi} is equivalent to  \eqref{eq:adjoint1}--\eqref{eq:grad}. \end{theorem}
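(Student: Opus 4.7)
The statement packages two implications, so I would split the proof into two clearly separated directions. The harder direction, namely that B-stationarity \eqref{eq:vi} implies the strong stationarity system \eqref{eq:adjoint1}--\eqref{eq:grad} under Assumptions \ref{assu:surj} and \ref{assu:reg}, is already Theorem \ref{thm:strongstat}, so my only job there is to invoke it. All the work lies in the reverse direction: assuming that $(\bar\ell, \bar y, \bar u, \xi, w, \lambda)$ satisfies \eqref{eq:adjoint1}--\eqref{eq:grad}, derive the variational inequality \eqref{eq:vi} for arbitrary $\delta\ell\in L^2(0,T;V)$. Notice that for this direction the extra surjectivity and regularity assumptions are \emph{not} needed, which is consistent with the way the statement is phrased.

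The plan for the non-trivial direction is to test the adjoint system against the linearized state. Fix $\delta\ell\in L^2(0,T;V)$ and set $(\delta y, \delta u) := \SS'(\bar\ell;\delta\ell)\in H^1_0(0,T;Y)\times L^2(0,T;U)$, which by Assumption \ref{assu:s_diff} solves the linearized system \eqref{eq:pde_lin}. I would then evaluate $\partial_y J(\bar y,\bar u,\bar\ell)\,\delta y + \partial_u J(\bar y,\bar u,\bar\ell)\,\delta u$ by substituting the right-hand sides coming from \eqref{eq:adjoint1} and \eqref{eq:adjoint2}. This expresses the sum as a combination of a term involving $-\dot\xi$ paired with $\delta y$, cross terms with $\partial_y\Phi^*\lambda,\,\partial_u\Phi^*\lambda$, and cross terms with $\partial_y\Psi^*w,\,\partial_u\Psi^*w$. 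The $\Psi$ cross terms collapse to $\langle w,\delta\ell\rangle$ via \eqref{eq:pde2}, while the $\Phi$ cross terms recombine into $-\langle\lambda, \Phi'(\bar y,\bar u)(\delta y,\delta u)\rangle$.

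Next, I would integrate by parts in time on the $\langle -\dot\xi,\delta y\rangle$ term; this is clean because $\delta y\in H^1_0(0,T;Y)$ and $\xi\in H^1_T(0,T;Y^*)$ so both boundary contributions vanish. Using \eqref{eq:pde1} this yields $\int_0^T\langle \xi(t), f'(\Phi(\bar y,\bar u)(t); \Phi'(\bar y,\bar u)(\delta y,\delta u)(t))\rangle_Y\,dt$. Now the sign condition \eqref{eq:signcond} applied pointwise with $v = \Phi'(\bar y,\bar u)(\delta y,\delta u)(t)\in Y^*$ gives exactly the estimate needed to cancel the remaining $-\langle\lambda,\Phi'(\bar y,\bar u)(\delta y,\delta u)\rangle$ term, producing an inequality rather than an equality. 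Combining all contributions with the gradient equation \eqref{eq:grad} (which rewrites $\langle w,\delta\ell\rangle$ as $-\langle\partial_\ell J,\delta\ell\rangle$) yields precisely \eqref{eq:vi}. The expected main obstacle is merely bookkeeping: keeping track of the correct dualities (the $Y^*$/$Y$ versus $Y$/$Y^*$ orientation in \eqref{eq:signcond}) and verifying that the pointwise sign condition can indeed be integrated against $\Phi'(\bar y,\bar u)(\delta y,\delta u)\in L^2(0,T;Y^*)$ to yield the required integral inequality. Once this is established, the final claim of the theorem follows by chaining the two directions: Theorem \ref{thm:strongstat} under Assumptions \ref{assu:surj}, \ref{assu:reg} gives \eqref{eq:vi}$\Rightarrow$\eqref{eq:adjoint1}--\eqref{eq:grad}, and the argument above supplies the converse.
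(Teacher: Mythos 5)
Your argument is correct and is essentially the standard proof of this equivalence; note that the paper itself imports Theorem \ref{thm:equiv_B_strong} verbatim from \cite[Thm.\,2.13]{st_coup} without reproducing a proof, so there is nothing in the text to compare against line by line. Your main computation — substituting \eqref{eq:adjoint1}, \eqref{eq:adjoint2}, \eqref{eq:grad} into $\partial_{(y,u)}J\,\SS'(\bar\ell;\dl)+\partial_\ell J\,\dl$, cancelling the $\Psi$-terms via \eqref{eq:pde2}, integrating $\langle-\dot\xi,\dy\rangle$ by parts (both boundary terms vanish since $\dy(0)=0$ and $\xi(T)=0$), inserting \eqref{eq:pde1}, and applying \eqref{eq:signcond} pointwise with $v=\Phi'(\bar y,\bar u)(\dy,\du)(t)\in Y^\ast$ — is exactly the intended mechanism and closes correctly. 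The only point worth making explicit in a write-up is the converse direction: Theorem \ref{thm:strongstat} is \emph{stated} for a local optimum, whereas here one must apply it to an $\bar\ell$ that merely satisfies \eqref{eq:vi}; this is legitimate because the proof of Theorem \ref{thm:strongstat} in \cite{st_coup} uses local optimality only through the B-stationarity inequality \eqref{eq:vi} (and Assumptions \ref{assu:surj}, \ref{assu:reg} must be read as holding at this $\bar\ell$), but as phrased your appeal to the theorem is a slight overstatement of what its hypotheses literally permit.
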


\section{{Formulation of viscous history-dependent EVIs as non-smooth ODEs}}\label{sec:evi}
This section focuses on proving that  the following viscous history-dependent evolution
\begin{equation}\label{eq:evi}\tag{EVI}
 R(\HH(y)(t),\eta)-R(\HH(y)(t),\dot y(t))+\dual{\V\dot y(t)}{\eta-\dot y(t)}_Y  \geq \dual{g(y(t),\ell(t))}{\eta-\dot y(t)}_Y \quad \forall\, \eta \in Y, 
\end{equation}\text{\ae}$(0,T)$, is equivalent to a non-smooth  ODE in the Hilbert space $Y$, cf.\,Theorem \ref{thm:ode} below. 

If the dependency of the dissipation $R$ on the history operator $\HH$ is dropped, \eqref{eq:evi} is just the viscous EVI from \cite[Sec.\,3]{st_coup}.
By contrast to \cite{st_coup}, the non-smooth non-linearity $\FF$ (Definition \ref{def:fp} below) appearing in our  ODE \eqref{eq:ode} has two arguments $(\zeta,\omega)$ such that for each $\zeta$, $\FF(\zeta)$  is the solution operator of an elliptic VI of the second kind, cf.\,\eqref{eq:vi_ell}. This can be described by means of an explicit formula featuring the  projection operator \cite[Sec.\,3]{st_coup}. 
Such a formula allows us to state conditions under which the directional differentiability  of the solution map associated to \eqref{eq:evi} is guaranteed (Theorem \ref{thm:ode_dir} below).

In all what follows, $\ell \in \lH$ is fixed. 
Here, $Z$ is a real reflexive Banach space, while $Y$ is a real Hilbert space.

\begin{assumption}\label{assu:st}
For the operators  in \eqref{eq:evi} we require:
 \begin{enumerate}
   \item\label{it:st1} The non-smooth functional $R:X \times Y  \to (-\infty,\infty]$ has the following properties:
 \begin{enumerate}
   \item\label{it:st11} For each $\zeta \in X$, $R(\zeta, \cdot)$ is proper, convex, lower semicontinuous and \bl{positively} homogeneous, i.e., $R(\zeta,\alpha \eta)=\alpha R(\zeta,\eta)$ for all $\alpha >0$ and all $\eta \in Y$.
   \item \label{it:st12} There exists $L_R \geq 0$ such that 
\begin{align*}
R(\zeta_1,\eta_2)-& R(\zeta_1,\eta_1)+R(\zeta_2,\eta_1)-R(\zeta_2,\eta_2)  \leq L_R\,\|\zeta_1-\zeta_2\|_X\|\eta_1-\eta_2\|_Y 
   \\& \quad \qquad  \qquad \forall\,\zeta_1,\zeta_2 \in X,\ \forall\,\eta_1 \in  \dom R(\zeta_1,\cdot),\eta_2 \in \dom R(\zeta_2,\cdot).
   \end{align*}
   \end{enumerate}
   \item\label{it:st_h}  The \textit{history operator} $\HH:\lly \to \llx$ satisfies 
$$\|\HH (\eta_1)(t)-\HH (\eta_2)(t)\|_{X} \leq L_\HH\, \int_0^t \|\eta_1(s)-\eta_2(s)\|_{Y} \,ds \quad  \text{a.e.\ in } (0,T),$$for all  $\eta_1,\eta_2 \in \lly$, where $L_\HH>0$ is a positive constant.   Moreover, $ \HH:  \lly \to \llx$ is supposed to be directionally differentiable.
  \item\label{it:st2} The viscosity operator $\V \in \LL(Y,Y^\ast)$ is coercive, i.e., there exists $\vartheta >0$ so that 
  $\dual{\V\eta}{\eta}_Y \geq \vartheta \|\eta\|_Y^2$ for all $\eta \in Y$. Moreover, $\V$ is \bl{self-adjoint}, i.e., $\dual{\V \eta}{y}_Y=\dual{\V y}{\eta}_Y$ for all $\eta, y \in Y$. 
    \item\label{it:st3}
The  mapping  $g: Y \times Z \to Y^\ast$ is {directionally} differentiable and   Lipschitz continuous with Lipschitz constant $L_g>0$.
\end{enumerate}
\end{assumption}

\begin{remark}\label{rem:volt}
The requirement in Assumption \ref{assu:st}.\ref{it:st12} corresponds to \cite[(3.54)]{sofonea}. Note that this condition is needed in the proof of \cite[Thm.\,4.9, page 72]{sofonea} as well, in order to be able to apply a fix point argument, which in turn leads to the existence of a unique solution for \eqref{eq:evi}.
\\
Assumption \ref{assu:st}.\ref{it:st_h} is satisfied by the Volterra operator $\HH:\lly \to C([0,T]; X)$, defined as 
\begin{equation*}
 [0,T] \ni t \mapsto \HH(y)(t):=\int_0^t A(t-s)y(s) \,ds +y_0 \in X,\end{equation*}
where $A\in C([0,T]; \LL(Y,X))$ and $y_0\in X.$
This type of operator is often employed in the study of history-depedent evolutionary variational inequalities, see e.g.\cite[Ch.\,4.4]{sofonea}.\end{remark}

\subsection{Preliminaries}

 In the sequel, Assumption \ref{assu:st} is tacitly assumed, without mentioning it every time. Note that, in view of Assumption \ref{assu:st}.\ref{it:st2}, the operator $\V$ induces a norm on $Y$, which will be denoted by $\|\cdot\|_{\V}:=\sqrt{\dual{\V\cdot }{\cdot }_Y}$. Similarly,  the operator $\V^{-1}$ induces a norm on $Y^\ast$, which we abbreviate $\|\cdot\|_{\V^{-1}}:=\sqrt{\dual{\V^{-1}\cdot }{\cdot }_{Y^\ast}}$ in the following. We remark that $\|\cdot\|_{\V}$ and $\|\cdot\|_{\V^{-1}}$ are equivalent to $\|\cdot\|_{Y}$ and $\|\cdot\|_{Y^\ast}$, respectively.

 \begin{definition}[The projection operator]\label{proj}
Let us define the function $B:X \times Y^\ast \to Y^\ast$ as  \begin{equation*}\label{eq:b}
B (\zeta,\o):=P_{\partial_2 R(\zeta,0)}\o,
\end{equation*}where, for each $\zeta \in X$, the operator $P_{\partial_2 R(\zeta, 0)}:Y^\ast \to Y^\ast $ is the \bl{(metric)} projection onto the set $$\partial_2 R(\zeta, 0)=\{\varphi \in Y^\ast: \dual{\varphi}{v}_Y \leq R(\zeta,v) \quad \forall\,v \in Y\}$$ w.r.t.\ the \bl{inner product $\dual{\V^{-1}\cdot }{\cdot }_{Y^\ast}$}, i.e.,  $P_{\partial_2 R(\zeta, 0)}\o$ is the unique solution of \begin{equation}\label{eq:min} \min_{\mu \in \partial_2 R(\zeta, 0)} \frac{1}{2}\|\o-\mu\|_{\V^{-1}}^2
\end{equation} 
for any $\o \in Y^\ast$.
\end{definition}

\begin{definition}[The non-smooth non-linearity]\label{def:fp}
Let us define the function $\FF:X \times Y^\ast \to Y$ as  \begin{equation}\label{eq:f}
\mathcal{F} (\zeta,\o):=\V^{-1}(\o-B(\zeta,\o)).
\end{equation}
\end{definition}

\begin{lemma}\label{lem:f}
For each $\zeta \in X$, the  mapping $\FF(\zeta,\cdot):Y^\ast \ni \o \mapsto z \in Y$ is the solution operator of the following elliptic VI of the second kind
  \begin{equation}\label{eq:vi_ell}
R(\zeta,\eta)-R(\zeta,z)+\dual{\V z}{\eta-z}_Y \geq \dual{\o}{\eta-z}_Y  \quad \forall\, \eta \in Y.
\end{equation}Thus, \eqref{eq:vi_ell} is equivalent to $z=\FF(\zeta,\o)=\V^{-1}(\o-B(\zeta,\o))$ for any $(\zeta,\o) \in X \times Y^\ast$.
\end{lemma}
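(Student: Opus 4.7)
The plan is to reduce the variational inequality \eqref{eq:vi_ell} to a subdifferential inclusion and then match that inclusion with the characterization of the projection $B(\zeta,\o)$. Throughout, $\zeta \in X$ is fixed. First, by standard theory on elliptic VIs of the second kind (coercivity and self-adjointness of $\V$, together with $R(\zeta,\cdot)$ being proper, convex and lower semicontinuous), the variational inequality \eqref{eq:vi_ell} admits a unique solution $z \in Y$ for every $\o \in Y^\ast$. By definition of the subdifferential, \eqref{eq:vi_ell} is equivalent to the inclusion
\begin{equation*}
\o - \V z \in \partial_2 R(\zeta, z).
\end{equation*}
Thus it suffices to show that $B := \o - \V z$ coincides with $P_{\partial_2 R(\zeta,0)}\o$.

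Next I would exploit positive homogeneity (Assumption \ref{assu:st}.\ref{it:st11}) to derive the well-known representation
\begin{equation*}
R(\zeta,\eta) \;=\; \sup_{\mu \in \partial_2 R(\zeta,0)} \dual{\mu}{\eta}_Y \quad \forall\, \eta \in Y,
\end{equation*}
i.e., $R(\zeta,\cdot)$ is the support function of $\partial_2 R(\zeta,0)$ (this uses convexity, lower semicontinuity, positive homogeneity and $R(\zeta,0)=0$). A direct consequence is the refined subdifferential identity
\begin{equation*}
\partial_2 R(\zeta, z) \;=\; \bigl\{\mu \in \partial_2 R(\zeta,0)\,:\, \dual{\mu}{z}_Y = R(\zeta,z)\bigr\}.
\end{equation*}

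Finally I would compare this with the characterization of the metric projection $B(\zeta,\o)$ associated to \eqref{eq:min}. Since $\partial_2 R(\zeta,0)$ is closed and convex in $Y^\ast$, and the inner product $\dual{\V^{-1}\cdot}{\cdot}_{Y^\ast}$ is equivalent to the standard one, $B=B(\zeta,\o)$ is uniquely characterized by $B \in \partial_2 R(\zeta,0)$ together with
\begin{equation*}
\dual{\V^{-1}(\o - B)}{\mu - B}_{Y^\ast} \leq 0 \quad \forall\, \mu \in \partial_2 R(\zeta,0).
\end{equation*}
Setting $z := \V^{-1}(\o - B) \in Y$ (so that $\V z = \o - B$), the projection inequality rewrites as $\dual{\mu - B}{z}_Y \leq 0$ for all $\mu \in \partial_2 R(\zeta,0)$. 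Taking the supremum over $\mu$ yields $R(\zeta,z) \leq \dual{B}{z}_Y$, while $B \in \partial_2 R(\zeta,0)$ forces the reverse inequality $\dual{B}{z}_Y \leq R(\zeta,z)$. Hence $\dual{B}{z}_Y = R(\zeta,z)$, which by the refined characterization above means $B \in \partial_2 R(\zeta,z)$, i.e., $\o - \V z \in \partial_2 R(\zeta,z)$. Therefore $z$ solves \eqref{eq:vi_ell} and by uniqueness $z = \FF(\zeta,\o) = \V^{-1}(\o - B(\zeta,\o))$. The reverse direction — starting from a solution $z$ of \eqref{eq:vi_ell} and verifying that $\o - \V z$ satisfies the projection optimality conditions — follows by reading the same chain of equivalences backwards.

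The main technical obstacle is the step relating $\partial_2 R(\zeta,z)$ to $\partial_2 R(\zeta,0)$: it hinges on $R(\zeta,\cdot)$ being the support function of $\partial_2 R(\zeta,0)$, which in turn crucially requires both positive homogeneity and lower semicontinuity (ensuring $R(\zeta,0)=0$ and the biconjugate identity). All remaining steps are manipulations with the defining inequalities of subdifferential and projection, made convenient by the self-adjointness of $\V$, which guarantees that $\dual{\V^{-1}\cdot}{\cdot}_{Y^\ast}$ defines a genuine inner product on $Y^\ast$.
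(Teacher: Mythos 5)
Your proof is correct. The paper's own proof of this lemma is a one-line appeal to \cite[Lemma 3.3]{st_coup}, applied to $R(\zeta,\cdot)$ for each fixed $\zeta$; your argument is a valid self-contained reconstruction of exactly that result, via the standard route (the VI as the inclusion $\o-\V z\in\partial_2 R(\zeta,z)$, the support-function representation $R(\zeta,\cdot)=\sigma_{\partial_2 R(\zeta,0)}$ coming from positive homogeneity and lower semicontinuity, and the variational characterization of the metric projection in the $\dual{\V^{-1}\cdot}{\cdot}_{Y^\ast}$ inner product). All the auxiliary facts you invoke hold under Assumption \ref{assu:st} (in particular $R(\zeta,0)=0$ and the nonemptiness and closed convexity of $\partial_2 R(\zeta,0)$), so nothing is missing.
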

\begin{proof}
The result follows by applying \cite[Lemma 3.3]{st_coup} for $R(\zeta,\cdot)$ for each $\zeta \in X$.
\end{proof}




\begin{lemma}[Lipschitz continuity of $\FF$]\label{lip_F}
The function $\FF:X \times Y^\ast \to Y$ is Lipschitz continuous with Lipschitz constant $L_\FF:=\frac{\max\{1,L_R\}}{\vartheta}$.
\end{lemma}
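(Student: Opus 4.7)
The plan is to exploit the VI characterisation of $\FF$ given by Lemma \ref{lem:f}, test the two VIs against each other, and then use the mixed-increment estimate on $R$ from Assumption \ref{assu:st}.\ref{it:st12} together with the coercivity of $\V$.

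More precisely, fix $(\zeta_1,\o_1),(\zeta_2,\o_2)\in X\times Y^\ast$ and set $z_i:=\FF(\zeta_i,\o_i)\in Y$ for $i=1,2$. By Lemma \ref{lem:f} each $z_i$ belongs to $\dom R(\zeta_i,\cdot)$ and satisfies
\begin{equation*}
R(\zeta_i,\eta)-R(\zeta_i,z_i)+\dual{\V z_i}{\eta-z_i}_Y\ge \dual{\o_i}{\eta-z_i}_Y\quad \forall\,\eta\in Y.
\end{equation*}
Testing the VI for $z_1$ with $\eta=z_2$ and the VI for $z_2$ with $\eta=z_1$ and adding, the $\V$-terms combine into $-\dual{\V(z_1-z_2)}{z_1-z_2}_Y$, yielding
\begin{equation*}
\dual{\V(z_1-z_2)}{z_1-z_2}_Y\le \bigl[R(\zeta_1,z_2)-R(\zeta_1,z_1)+R(\zeta_2,z_1)-R(\zeta_2,z_2)\bigr]+\dual{\o_1-\o_2}{z_1-z_2}_Y.
\end{equation*}

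Applying Assumption \ref{assu:st}.\ref{it:st12} (which is applicable since $z_i\in\dom R(\zeta_i,\cdot)$) bounds the bracket by $L_R\|\zeta_1-\zeta_2\|_X\|z_1-z_2\|_Y$, while the coercivity in Assumption \ref{assu:st}.\ref{it:st2} bounds the left-hand side below by $\vartheta\|z_1-z_2\|_Y^2$. Dividing by $\|z_1-z_2\|_Y$ (the estimate is trivial if this vanishes) and grouping the constants gives
\begin{equation*}
\|z_1-z_2\|_Y\le \frac{L_R}{\vartheta}\|\zeta_1-\zeta_2\|_X+\frac{1}{\vartheta}\|\o_1-\o_2\|_{Y^\ast}\le \frac{\max\{1,L_R\}}{\vartheta}\bigl(\|\zeta_1-\zeta_2\|_X+\|\o_1-\o_2\|_{Y^\ast}\bigr),
\end{equation*}
which is exactly the claimed Lipschitz bound on $X\times Y^\ast$.

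No real obstacle is expected: the only points requiring a bit of care are the membership $z_i\in\dom R(\zeta_i,\cdot)$ needed to invoke Assumption \ref{assu:st}.\ref{it:st12}, and the fact that one must work directly with $\V$ rather than with its induced norm $\|\cdot\|_\V$ in order to match the explicit constant $\max\{1,L_R\}/\vartheta$. Both are handled in the chain above.
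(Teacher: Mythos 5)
Your proof is correct and follows essentially the same route as the paper: characterise $z_i=\FF(\zeta_i,\o_i)$ via the elliptic VI from Lemma \ref{lem:f}, cross-test the two inequalities, bound the mixed $R$-increments with Assumption \ref{assu:st}.\ref{it:st12} (noting $z_i\in\dom R(\zeta_i,\cdot)$), and conclude with the coercivity of $\V$. The constant $\max\{1,L_R\}/\vartheta$ comes out exactly as in the paper.
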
 
\begin{proof}Let $(\zeta_1,\o_1),(\zeta_2,\o_2) \in X \times Y^\ast$ be arbitrary but fixed and let us abbreviate $z_i:=\FF(\zeta_i,\o_i),\ i=1,2.$ According to Lemma \ref{lem:f}, $z_i$, $i=1,2$, solves the VI
\[  R(\zeta_i,\eta)-R(\zeta_i,z_i)+\dual{\V z_i}{\eta-z_i}_Y \geq \dual{\o_i}{\eta-z_i}_Y  \quad \forall\, \eta \in Y.\]
Testing with $z_j$, $j \neq i$ and adding the resulting inequalities leads to 
\begin{align*} 
 \dual{\V (z_2-z_1)}{z_2-z_1}_Y &\leq \dual{\o_2-\o_1}{z_2-z_1}_Y
\\&\quad + R(\zeta_1,z_2)-R(\zeta_1,z_1)+R(\zeta_2,z_1)-R(\zeta_2,z_2)
\\&\leq \|\o_2-\o_1\|_{Y^\ast}\|z_2-z_1\|_Y
+L_R \|\zeta_2-\zeta_1\|_{X}\|z_2-z_1\|_Y,\end{align*}where the last inequality is due to  Assumption \ref{assu:st}.\ref{it:st12}; note that $z_i \in \dom R(\zeta_i,\cdot),\ i=1,2,$ as a result of Lemma \ref{lem:f}.
Now, the coercivity of $\V$, see Assumption \ref{assu:st}.\ref{it:st2}, yields the desired assertion.
\end{proof}

As an immediate consequence of Lemma \ref{lem:f} and Assumption \ref{assu:st}.\ref{it:st2}, we have the following 

\begin{corollary}
\label{cor:fp}
The solution operator $\FF:X \times Y^\ast \ni (\zeta, \o) \mapsto z \in Y$ of \eqref{eq:vi_ell} is directionally differentiable at $(\bar \zeta,\bo) \in X \times Y^\ast$ if and only if the mapping $B:( \zeta,\o)\mapsto P_{\partial_2 R(\zeta, 0)}\o$ is directionally differentiable at $(\bar \zeta,\bo) \in X \times Y^\ast$. If this is the case, then
 \begin{equation*}\label{eq:f_deriv}
\FF'((\bar \zeta,\bo);(\delta \zeta,\delta \omega))=\V^{-1}\Big(\doo-B'((\bar \zeta,\bo);(\delta \zeta,\delta \omega))\Big) \quad \forall\, (\delta \zeta,\doo) \in X \times Y^\ast.\end{equation*}
\end{corollary}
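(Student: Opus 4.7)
The corollary reduces to a one-line calculation once we exploit that $\V^{-1}\in \LL(Y^\ast,Y)$ is a bounded \emph{linear} bijection (Assumption \ref{assu:st}.\ref{it:st2}). The plan is to pass the difference quotient through $\V^{-1}$ and read off the equivalence.

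First, I would fix $(\bar\zeta,\bo)\in X\times Y^\ast$ and an arbitrary direction $(\delta\zeta,\delta\omega)\in X\times Y^\ast$, and use Definition \ref{def:fp} to write
\begin{equation*}
\frac{\FF(\bar\zeta+\tau\delta\zeta,\bo+\tau\delta\omega)-\FF(\bar\zeta,\bo)}{\tau}
=\V^{-1}\!\left(\delta\omega-\frac{B(\bar\zeta+\tau\delta\zeta,\bo+\tau\delta\omega)-B(\bar\zeta,\bo)}{\tau}\right).
\end{equation*}
Since $\V^{-1}$ is continuous from $Y^\ast$ to $Y$, the $Y$-limit of the left-hand side as $\tau\searrow 0$ exists if and only if the $Y^\ast$-limit of the bracketed difference quotient on the right exists; and in that case the two limits correspond under $\V^{-1}$. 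This immediately gives the ``if'' direction together with the claimed formula.

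For the converse, I would invert the relation: from \eqref{eq:f} one has $B(\zeta,\omega)=\omega-\V \FF(\zeta,\omega)$, and $\V\in\LL(Y,Y^\ast)$ is also bounded linear. Thus the analogous difference-quotient identity
\begin{equation*}
\frac{B(\bar\zeta+\tau\delta\zeta,\bo+\tau\delta\omega)-B(\bar\zeta,\bo)}{\tau}
=\delta\omega-\V\,\frac{\FF(\bar\zeta+\tau\delta\zeta,\bo+\tau\delta\omega)-\FF(\bar\zeta,\bo)}{\tau}
\end{equation*}
shows that directional differentiability of $\FF$ implies that of $B$ (with $B'=\delta\omega-\V\FF'$, which is consistent with the formula above after applying $\V^{-1}$).

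There is no real obstacle here: Lemma \ref{lem:f} already identifies $\FF$ with $\V^{-1}(\mathrm{id}-B)$, so the corollary is a direct consequence of the chain rule for directional derivatives under composition with a bounded linear homeomorphism. The only thing to note is that the directional derivative is taken in the strong sense (Assumption \ref{assu:s_diff}-style), and the continuity of $\V^{-1}$ preserves strong convergence of the difference quotients, which is exactly what is needed.
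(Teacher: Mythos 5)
Your argument is correct and is precisely the computation the paper leaves implicit when it calls the corollary ``an immediate consequence of Lemma \ref{lem:f} and Assumption \ref{assu:st}.\ref{it:st2}'': the identity $\FF=\V^{-1}(\mathrm{id}-B)$ together with the bounded linear invertibility of $\V$ (coercivity plus Lax--Milgram) transfers difference quotients in both directions. No gap and no genuinely different route.
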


\subsection{Main results}
\begin{theorem}[Viscous history-dependent EVIs are non-smooth ODEs]\label{thm:ode}
The viscous history-dependent problem \eqref{eq:evi} is equivalent to the following ODE in Hilbert space
 \begin{equation}\label{eq:ode}
\dot y= \FF(\HH( y),g(y,\ell)) \ \text{ \ae }(0,T),
 \end{equation}where $\FF$ is given by \eqref{eq:f} and  $\ell :[0,T] \to Z$. If $y(0)=0$, then \eqref{eq:evi} admits a unique solution $y \in \hy$ for every right-hand side $\ell \in \lH$. \end{theorem}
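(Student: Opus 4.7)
The plan is to establish both claims by combining Lemma \ref{lem:f} applied pointwise in time with a Banach fixed point argument that exploits the Volterra-type structure of $\HH$. For the equivalence, I would fix $y \in \hy$ and, for a.e.\,$t \in (0,T)$, set $\zeta := \HH(y)(t) \in X$, $\omega := g(y(t),\ell(t)) \in Y^\ast$, and $z := \dot y(t) \in Y$. The variational inequality in \eqref{eq:evi} at time $t$ is then exactly the elliptic VI of the second kind \eqref{eq:vi_ell} for this triple, so Lemma \ref{lem:f} yields the pointwise equivalence $\dot y(t) = \FF(\HH(y)(t), g(y(t),\ell(t)))$, which is precisely \eqref{eq:ode}; the reverse direction is obtained by reversing this argument, and measurability of the resulting right-hand side follows from the Lipschitz continuity of $\FF$, $\HH$, and $g$.

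For existence and uniqueness under $y(0)=0$, I would recast \eqref{eq:ode} as a fixed point equation on $C([0,T];Y)$ by defining
\[
\Lambda(y)(t) := \int_0^t \FF\bigl(\HH(y)(s),\, g(y(s),\ell(s))\bigr)\, ds.
\]
Combining the Lipschitz bound from Lemma \ref{lip_F} with Assumption \ref{assu:st}.\ref{it:st_h} and Assumption \ref{assu:st}.\ref{it:st3}, for any $y_1, y_2 \in C([0,T];Y)$ one readily obtains
\[
\|\Lambda(y_1)(t) - \Lambda(y_2)(t)\|_Y \leq L_\FF \int_0^t \Bigl( L_\HH \int_0^s \|y_1(\tau) - y_2(\tau)\|_Y\, d\tau + L_g \|y_1(s) - y_2(s)\|_Y \Bigr) ds.
\]
Endowing $C([0,T];Y)$ with the equivalent Bielecki-type norm $\|y\|_\lambda := \sup_{t \in [0,T]} e^{-\lambda t}\|y(t)\|_Y$ and choosing $\lambda$ sufficiently large would render $\Lambda$ a strict contraction, so Banach's fixed point theorem would provide a unique $y \in C([0,T];Y)$ with $y = \Lambda(y)$ and $y(0)=0$.

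To upgrade the regularity to $\hy$, I would observe that $\ell \in \lH$ together with $y \in C([0,T];Y)$ and the Lipschitz bounds on $\HH$ and $g$ imply $\HH(y) \in \llx$ and $g(y,\ell) \in \lY$; by Lemma \ref{lip_F} the integrand defining $\Lambda(y)$ then lies in $\lly$, so $\dot y \in \lly$ and $y \in \hy$. The main technical point I expect to address is that, because of the memory contribution of $\HH$, a naive contraction estimate in the unweighted sup norm fails on the full interval $[0,T]$; the Bielecki weight $e^{-\lambda t}$ circumvents any need to iterate on shrinking subintervals by simultaneously absorbing both the instantaneous Lipschitz term from $g$ and the accumulated Volterra term from $\HH$ inside a single contraction constant that can be made arbitrarily small by increasing $\lambda$.
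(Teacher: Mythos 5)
Your proposal is correct and follows essentially the same route as the paper: the equivalence is obtained by applying Lemma \ref{lem:f} pointwise in time, and existence and uniqueness follow from a Banach fixed point argument for the integral operator built from $\FF$, using the Lipschitz bounds of Lemma \ref{lip_F} together with Assumptions \ref{assu:st}.\ref{it:st_h} and \ref{assu:st}.\ref{it:st3}. The only difference is a standard technical variant: the paper obtains the contraction in $L^2(0,t^\ast;Y)$ on a sufficiently short interval and then concatenates, whereas you obtain it globally on $C([0,T];Y)$ via a Bielecki weighted norm; both devices are valid and yield the same conclusion.
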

 \begin{proof}
 The first assertion is due to Lemma \ref{lem:f}.  To solve \eqref{eq:ode}, we apply a fixed-point argument. For this, we take a look at the 
 mapping $L^2(0,t^\ast;Y) \ni \eta \mapsto \GG(\eta) \in H^1(0,t^\ast;Y)$, given by 
\begin{equation*}\label{eq:FF}
\GG(\eta)(\tau):=\int_0^\tau  \FF(\HH( \eta),g(\eta,\ell))(s) \;ds \quad \forall\, \tau \in [0,t^\ast],
\end{equation*}
where $t^\ast \in (0,T]$ is to be determined so that $\GG:L^2(0,t^\ast;Y) \to L^2(0,t^\ast;Y)$ is a contraction. 
For all $\eta_1,\eta_2 \in L^2(0,t^\ast; Y)$ the following estimate is true
\begin{equation}\label{eqq}
\begin{aligned}
 \|\GG(\eta_1)(\tau)&-\GG(\eta_2)(\tau)\|_{Y} 
 \\&\quad \leq L_{\FF} \int_0^\tau   \|\HH(\eta_1)(s)-\HH(\eta_2)(s)\|_{X} +\|g(\eta_1(s),\ell(s))-g(\eta_2(s),\ell(s))\|_{Y^\ast} \,ds
 \\& \quad    \leq L_{\FF}\int_0^\tau L_{\HH} \int_0^s \|\eta_1(\zeta)-\eta_2(\zeta) \|_{Y} \,d\zeta  +L_g\,\|\eta_1(s)-\eta_2(s) \|_{Y} \,ds
\\ &\quad \leq t^\ast L_{\FF}  \, L_{\HH}\|\eta_1-\eta_2\|_{L^1(0,t^\ast;Y)}
+L_{\FF}   \, L_{g}\|\eta_1-\eta_2\|_{L^1(0,t^\ast;Y)}
\\ &\quad \leq L_{\FF}  \,(t^\ast)^{1/2} (t^\ast L_{\HH} +L_g) \|\eta_1-\eta_2\|_{L^2(0,t^\ast;Y)}
\quad  \text{ for all }\tau \in [0,t^\ast].
\end{aligned}\end{equation}
Here we used the fact that $\FF:X \times Y^\ast \to Y$ is Lipschitzian according to Lemma \ref{lip_F}, as well as Assumptions \ref{assu:st}.\ref{it:st_h} and \ref{assu:st}.\ref{it:st3}. From \eqref{eqq} we deduce
 \begin{equation*}\label{eq:kontrak}
 \|\GG(\eta_1)-\GG(\eta_2)\|_{L^2(0,t^\ast; Y)}
\leq L_{\FF}  \,t^\ast (t^\ast L_{\HH} +L_g) \|\eta_1-\eta_2\|_{L^2(0,t^\ast;Y)},\end{equation*}which allows us to conclude that $\GG$ is a contraction for a small enough $t^\ast$. Thus, the ODE \eqref{eq:ode} restricted on $(0,t^\ast)$ admits a unique solution in $H^1_0(0,t^\ast;Y)$. Now, the unique solvability of \eqref{eq:ode} on the whole interval $(0,T)$ and the desired regularity of $y$ follows by a concatenation argument.
\end{proof}
 \begin{proposition}[Lipschitz continuity]\label{prop:lip}
The  solution map $\SS: \lH \ni \ell \mapsto y \in H^1_0(0,T;Y)$ associated to \eqref{eq:evi} is Lipschitz continuous.     \end{proposition}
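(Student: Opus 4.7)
The plan is to work with the equivalent non-smooth ODE reformulation from Theorem \ref{thm:ode}, which provides a single clean inequality instead of the VI. Fix $\ell_1,\ell_2 \in \lH$ and set $y_i := \SS(\ell_i)$, so each satisfies $\dot y_i = \FF(\HH(y_i), g(y_i,\ell_i))$ a.e.\,on $(0,T)$ with $y_i(0)=0$. Integrating the difference in time and using the initial condition gives
\begin{equation*}
\|y_1(t)-y_2(t)\|_Y \leq \int_0^t \|\dot y_1(s)-\dot y_2(s)\|_Y \, ds.
\end{equation*}

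Next I would bound the integrand pointwise using the three Lipschitz bounds already at hand. Lemma \ref{lip_F} gives Lipschitz continuity of $\FF$ on $X\times Y^\ast$, Assumption \ref{assu:st}.\ref{it:st_h} controls the history operator, and Assumption \ref{assu:st}.\ref{it:st3} controls $g$. Combining these,
\begin{equation*}
\|\dot y_1(s)-\dot y_2(s)\|_Y \leq L_\FF\, L_\HH \int_0^s\|y_1(\tau)-y_2(\tau)\|_Y \, d\tau + L_\FF\,L_g\bigl(\|y_1(s)-y_2(s)\|_Y+\|\ell_1(s)-\ell_2(s)\|_Z\bigr).
\end{equation*}

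Substituting this back and switching the order of integration on the double integral (using $\int_0^t\int_0^s \cdots \, d\tau\, ds \leq t \int_0^t \cdots\, d\tau$) yields an estimate of the form
\begin{equation*}
\|y_1(t)-y_2(t)\|_Y \leq C_1 \int_0^t \|y_1(s)-y_2(s)\|_Y\,ds + C_2 \int_0^t \|\ell_1(s)-\ell_2(s)\|_Z\,ds,
\end{equation*}
with constants $C_1,C_2$ depending only on $T$, $L_\FF$, $L_\HH$, $L_g$. Gronwall's lemma then produces the uniform pointwise estimate $\|y_1(t)-y_2(t)\|_Y \leq C\,\|\ell_1-\ell_2\|_{L^1(0,T;Z)} \leq \tilde C\,\|\ell_1-\ell_2\|_{\lH}$. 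Feeding this back into the bound on $\|\dot y_1-\dot y_2\|_Y$ and squaring/integrating produces the analogous $L^2$-in-time estimate on the derivatives, giving Lipschitz continuity into $\hy$.

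No step is genuinely difficult here since all ingredients (ODE formulation, Lipschitz bounds on $\FF$, $\HH$, $g$) are already in place; the mild technical point is the double integral coming from the history operator, which must be handled carefully so that the Gronwall argument closes on $\|y_1-y_2\|_Y$ alone rather than on some mixed norm.
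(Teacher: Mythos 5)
Your proposal is correct and follows essentially the same route as the paper: both pass to the ODE reformulation $\dot y_i=\FF(\HH(y_i),g(y_i,\ell_i))$, bound the difference via the Lipschitz constants $L_\FF$, $L_\HH$, $L_g$ exactly as in the estimate \eqref{eqq}, and close with Gronwall's lemma before recovering the $H^1(0,T;Y)$ bound from the pointwise estimate on $\dot y_1-\dot y_2$. The handling of the double integral from the history operator is the same standard reduction used in the paper's proof of Theorem \ref{thm:ode}.
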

 \begin{proof}
Let $\ell_1,\ell_2 \in \lH$ be arbitrary but fixed and let us abbreviate $y_i:=\SS(\ell_i), i=1,2$. Estimating as in \eqref{eqq} leads to 
\begin{equation*}\label{eqqq}
\begin{aligned}
\| &y_1(t)- y_2(t)\|_Y \leq  \int_0^t \| \FF(\HH( y_1),g(y_1,\ell_1))(s)-\FF(\HH( y_2),g(y_2,\ell_2))(s)\|_Y \,ds
\\&\leq L_{\FF}  \int_0^t L_{\HH} \int_0^s \|y_1(\zeta)-y_2(\zeta) \|_{Y} \,d\zeta  +L_g\,(\|y_1(s)-y_2(s) \|_{Y}+\|\ell_1(s)-\ell_2(s) \|_{Z}) \,ds 
\end{aligned}\end{equation*}\text{\ae }$(0,T)$.
Gronwall's lemma and arguing as at the end of the proof of Theorem \ref{thm:ode} then yields the desired assertion.
\end{proof}

\begin{lemma}
For all $\eta, \delta \eta_1, \delta \eta_2 \in \lly$, it holds
\begin{equation}\label{eq:h'}
\|\HH'(\eta;\delta \eta_1)(t)-\HH'(\eta;\delta \eta_2)(t)\|_{X} \leq L_{\HH}  \int_0^t \|\delta \eta_1(s)-\delta \eta_2(s)\|_{Y} \,ds
\end{equation} \ae $(0,T)$.\end{lemma}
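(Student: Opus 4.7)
The plan is to exploit the Lipschitz estimate in Assumption \ref{assu:st}.\ref{it:st_h} applied to the two perturbations $\eta+\tau\delta\eta_1$ and $\eta+\tau\delta\eta_2$, and then pass to the limit $\tau\searrow 0$ using the directional differentiability of $\HH$.

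First, for any $\tau>0$, Assumption \ref{assu:st}.\ref{it:st_h} applied to the pair $\eta+\tau\delta\eta_1,\eta+\tau\delta\eta_2\in \lly$ yields, a.e.\ in $(0,T)$,
\begin{equation*}
\Big\|\frac{\HH(\eta+\tau\delta\eta_1)(t)-\HH(\eta+\tau\delta\eta_2)(t)}{\tau}\Big\|_X
\;\leq\; L_\HH\int_0^t\|\delta\eta_1(s)-\delta\eta_2(s)\|_Y\,ds,
\end{equation*}
where the factor $\tau$ cancels after pulling it out of the norm. Rewriting the left-hand side as
\begin{equation*}
\frac{\HH(\eta+\tau\delta\eta_1)-\HH(\eta)}{\tau}-\frac{\HH(\eta+\tau\delta\eta_2)-\HH(\eta)}{\tau},
\end{equation*}
the directional differentiability of $\HH:\lly\to\llx$ (Assumption \ref{assu:st}.\ref{it:st_h}) guarantees that, as $\tau\searrow 0$, this expression converges to $\HH'(\eta;\delta\eta_1)-\HH'(\eta;\delta\eta_2)$ strongly in $\llx$.

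Next, I select a null sequence $\{\tau_n\}\searrow 0$ and extract, via the standard argument (convergence in $L^2(0,T;X)$ implies a.e.\ pointwise convergence in $X$ along a subsequence), a common subsequence $\{\tau_{n_k}\}$ such that for $i=1,2$,
\begin{equation*}
\frac{\HH(\eta+\tau_{n_k}\delta\eta_i)(t)-\HH(\eta)(t)}{\tau_{n_k}}\;\longrightarrow\;\HH'(\eta;\delta\eta_i)(t)\quad\text{in }X,
\end{equation*}
for a.e.\ $t\in(0,T)$. Passing to the limit $k\to\infty$ in the pointwise inequality displayed above then delivers \eqref{eq:h'}.

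The only mildly delicate point is the passage from $\llx$-convergence to pointwise a.e.\ convergence in $X$; this is resolved by the subsequence extraction just described, applied simultaneously to both difference quotients. Everything else reduces to the positive homogeneity of the norm and the Lipschitz assumption on $\HH$.
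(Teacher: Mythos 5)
Your proof is correct and follows essentially the same route as the paper: apply the Lipschitz estimate from Assumption \ref{assu:st}.\ref{it:st_h} to $\eta+\tau\delta\eta_1$ and $\eta+\tau\delta\eta_2$, divide by $\tau$, and pass to the limit using directional differentiability together with a.e.\ convergence in $X$ along a subsequence. Your write-up merely makes explicit the common-subsequence extraction that the paper leaves implicit.
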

\begin{proof}
We observe that, in view of Assumption \ref{assu:st}.\ref{it:st_h}, it holds
$$\frac{1}{\tau}\|\HH (\eta+ \tau \delta \eta_1)(t)-\HH (\eta+\tau \delta \eta_2)(t)\|_{X} \leq   L_\HH\, \int_0^t \|\delta \eta_1(s)-\delta \eta_2(s)\|_{Y} \,ds $$$\ae $  $(0,T)$, for all  $\eta, \delta \eta_1,\delta \eta_2 \in \lly$ and all $\tau>0$. Passing to the limit $\tau \searrow 0$, where one uses the  directional differentiability of $ \HH$ and  the fact that convergence in $\llx$ implies $\ae$ convergence in $X$ for a subsequence,
then  yields  
the desired estimate.\end{proof}

\begin{theorem}[Hadamard directional differentiability]\label{thm:ode_dir}
The  solution map $\SS: \lH \ni \ell \mapsto y \in H^1_0(0,T;Y)$ associated to \eqref{eq:evi} is Hadamard directionally differentiable \cite[Def.\ 3.1.1]{schirotzek} at $\ll \in \lH$, if $\FF:X \times Y^\ast \to Y$  is directionally differentiable at $(\HH( \bar y)(t),g(\bar y(t),\ll(t)))$ f.a.a.\ $t \in (0,T)$ or, equivalently, if $B:X \times Y^\ast \to Y^\ast$ does so, where we abbreviate $\bar y:=\SS(\ll)$.
Its directional derivative $\dy:=\SS'(\ll;\dl)$ at $\ll$ in direction $\dl \in \lH$ is the unique solution of
 \begin{equation} \label{eq:ode_lin}
   \dot \dy= \FF'\Big((\HH(\bar y),g(\bar y,\ll));\big(\HH'(\bar y;\dy),g'(\yy,\ll){;}(\dy,\dl)\big)\Big) \  \text{ \ae }(0,T),\ \delta y(0) = 0.\end{equation}
      \end{theorem}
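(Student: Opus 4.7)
The plan is to proceed in three stages: existence/uniqueness of the linearized solution, a priori bounds on the difference quotients, and a residual/Gronwall argument.

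First, I would establish a unique $\dy \in \hy$ solving \eqref{eq:ode_lin} by replaying the contraction argument from Theorem \ref{thm:ode}. The key input is that, since $\FF$, $g$ and $\HH$ are Lipschitz continuous (Lemma \ref{lip_F} and Assumption \ref{assu:st}) and directionally differentiable, their directional derivatives are Lipschitz in the direction with the same constants; for $\HH'$ this is precisely the estimate \eqref{eq:h'}. The map
\begin{equation*}
\dy \mapsto \int_0^\cdot \FF'\big((\HH(\yy),g(\yy,\ll));(\HH'(\yy;\dy), g'(\yy,\ll;(\dy,\dl)))\big)(s)\,ds
\end{equation*}
then admits the same type of estimate as \eqref{eqq}, becomes a contraction on a sufficiently small interval, and extends to $(0,T)$ by concatenation.

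Given arbitrary sequences $\tau_n \searrow 0$ and $\dl_n \to \dl$ in $\lH$, I would set $y_n := \SS(\ll+\tau_n \dl_n)$ and $\dy_n := (y_n-\yy)/\tau_n$. Proposition \ref{prop:lip} bounds $\{\dy_n\}_n$ uniformly in $\hy$. Defining the residual $r_n := \dy_n - \dy$ and inserting the intermediate point $(\HH(\yy+\tau_n\dy), g(\yy+\tau_n\dy,\ll+\tau_n\dl))$ inside the first argument of $\FF$, I would decompose
\begin{equation*}
\dot r_n(t) = A_n(t) + B_n(t),
\end{equation*}
where $A_n$ is the $\FF$-difference between arguments involving $y_n$ and $\yy+\tau_n\dy$, while
\begin{equation*}
B_n(t) = \frac{\FF\big(p(t)+\tau_n q_n(t)\big) - \FF\big(p(t)\big)}{\tau_n} - \FF'\big(p(t); q(t)\big),
\end{equation*}
with $p := (\HH(\yy), g(\yy,\ll))$, $q := (\HH'(\yy;\dy), g'(\yy,\ll;(\dy,\dl)))$, and $q_n$ the corresponding finite-difference quotients. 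Exploiting the Lipschitz properties of $\FF$, $g$ and the estimate \eqref{eq:h'} yields
\begin{equation*}
\|A_n(t)\|_Y \leq L_{\FF}\Big(L_{\HH}\int_0^t \|r_n(s)\|_Y \,ds + L_g\,\|r_n(t)\|_Y + L_g\,\|\dl_n(t)-\dl(t)\|_Z\Big).
\end{equation*}
For $B_n$, the directional differentiability of $\HH$ and $g$ delivers $q_n \to q$ in $\llx \times \lY$, and the Lipschitz continuity of $\FF$ upgrades its directional derivative to a Hadamard directional derivative, so $B_n(t) \to 0$ a.e.\ with a dominating bound in $\lly$. Gronwall's inequality then forces $r_n \to 0$ in $C([0,T];Y)$, and revisiting the ODE \eqref{eq:ode_lin} upgrades this to convergence in $\hy$.

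Since the limit is independent of the chosen sequences, $\dy_n \to \dy$ along the full family $(\tau_n,\dl_n)$, which is exactly Hadamard directional differentiability. The principal obstacle is the passage to zero in $B_n$: as the direction $q_n$ varies with $n$, a bare directional derivative of $\FF$ is not sufficient, and the Lipschitz continuity (Lemma \ref{lip_F}) must be used to secure a genuine Hadamard directional derivative that accommodates variable directions.
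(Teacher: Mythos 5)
Your proposal is correct and follows essentially the same route as the paper's proof: unique solvability of the linearized ODE by the same contraction argument, insertion of the intermediate point $(\yy+\tau\dy,\ll+\tau\dl)$ into the composite map $\FF(\HH(\cdot),g(\cdot,\cdot))$, Lipschitz/Gronwall control of the resulting first difference via Lemma \ref{lip_F} and \eqref{eq:h'}, and a dominated-convergence plus Hadamard-upgrade argument (exploiting the Lipschitz continuity of $\FF$) to send the difference-quotient residual to zero. The only cosmetic deviation is that you secure the Hadamard property of $\SS$ directly by letting the direction vary ($\dl_n\to\dl$) and absorbing the extra $L_g\|\dl_n-\dl\|_Z$ term into the Gronwall estimate, whereas the paper first proves plain directional differentiability for a fixed direction and then upgrades via Proposition \ref{prop:lip} and \cite[Lem.\ 3.1.2(b)]{schirotzek}; both routes are legitimate.
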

\vspace{-0.4cm}\begin{proof}
By arguing as in the  proof of Theorem \ref{thm:ode}, we get that, for any $\dl \in \lH$, \eqref{eq:ode_lin} admits a unique solution $\dy \in \hy$. Note that in this case we rely on the Lipschitz continuity of the directional derivatives of $\FF$ and $g$ w.r.t.\ direction and on the estimate \eqref{eq:h'}. 
Further, since $\FF$ is Lipschitzian according to Lemma \ref{lip_F}, we can apply Lebesgue's dominated convergence theorem to obtain that  \[\FF:\llx \times L^2(0,T;Y^\ast) \to \lly\] is directionally differentiable at $(\HH(\bar y),g(\bar y,\ll))$. Moreover, by relying again on Lemma \ref{lip_F}, 
we obtain that $\FF$ is even Hadamard directionally differentiable at $(\HH(\bar y),g(\bar y,\ll))$ \cite[Def.\ 3.1.1]{schirotzek}, as a result of \cite[Lem.\ 3.1.2(b)]{schirotzek}.  Since 
$$f:\lly \times L^2(0,T;Z) \ni  (y,\ell) \mapsto (\HH(y),g(y,\ell)) \in \llx \times L^2(0,T;Y^\ast)$$ is directionally differentiable, by Assumptions \ref{assu:st}.\ref{it:st_h} and  \ref{assu:st}.\ref{it:st3}, chain rule \cite[Prop.\ 3.6(i)]{shapiro} implies that \vspace{-0.4cm} \[\widehat G:=\FF \circ f\]
 is (Hadamard) directionally differentiable at $(\yy,\ll)$ with 
  \begin{equation}\label{eq:g_d}
  \widehat G'((\yy,\ll);h)=\FF'\Big((\HH(\bar y),g(\bar y,\ll));\big(\HH'(\bar y;h_1),g'((\yy,\ll){;}h)\big)\Big) \quad \forall\, h=(h_1,h_2) \in  L^2(0,T;Y \times Z). \end{equation}
For simplicity, in the following we abbreviate $\bar y^{\tau}:=\SS(\ll + \tau \,\dl)$, where $\tau>0$ and $\dl \in \lH$ are arbitrary, but fixed. Due to \eqref{eq:g_d} and by combining the equations for $\bar y^{\tau}$, $\bar y$ and \eqref{eq:ode_lin}, we obtain
  \begin{equation}\label{eq:www} \begin{aligned}
 \frac{d}{dt} \Big(\frac{\bar y^\tau-\bar y}{\tau}-\dy \Big)&=\frac{\widehat G(\bar y^\tau,\ll+ \tau \,\dl) -\widehat G(\bar y,\ll ) }{\tau}
\\&\qquad \qquad -\widehat G'\big((\yy,\ll);(\dy,\dl)\big) \quad \text{\ae} (0,T),  
 \\  \Big(\frac{\bar y^\tau-\bar y}{\tau}-\dy\Big)(0) &= 0.
 \end{aligned}
 \end{equation}This implies  
   \begin{equation}\label{eq:awalin-}
 \begin{aligned}
&\Big\|\Big(\frac{\bar y^\tau-\bar y}{\tau}-\dy \Big)(t)\Big\|_Y
\\  &\  \leq \int_0^t \Big\|\frac{\widehat G (\bar y^\tau,\ll + \tau \,\dl)(s) -\widehat G\big((\bar y,\ll )+\tau (\dy,\dl) \big)(s) }{\tau}\Big\|_Y \,ds
\\ +& \int_0^t \underbrace{\Big\|\frac{\widehat G\big((\bar y,\ll )+\tau (\dy,\dl) \big)(s)-\widehat G(\bar y,\ll )(s)}{\tau}
 -\widehat G'\big((\yy,\ll);(\dy,\dl)\big) (s)\Big\|_Y }_{=:A_\tau(s)} \,ds
 \\  &\  \leq \frac{ L_\FF}{\tau} \int_0^t \, \Big\|{f (\bar y^\tau,\ll + \tau \,\dl)(s) -f \big((\bar y,\ll )+\tau (\dy,\dl) \big)(s) }\Big\|_{X \times Y^\ast}
  \\&\qquad + \int_0^t A_\tau(s) \,ds 
   \\  &\  \leq \frac{ L_\FF}{\tau} \int_0^t \, \|\HH(\bar y^\tau)(s)-\HH(\bar y +\tau \dy)(s)\|_{X} \,ds
   \\&\qquad + \frac{ L_\FF}{\tau} \int_0^t\|g(\bar y^\tau(s),(\ll + \tau \,\dl)(s))-g((\bar y +\tau \dy)(s),(\ll + \tau \,\dl)(s))\|_{Y^\ast} \,ds
  \\&\qquad + \int_0^t A_\tau(s) \,ds 
     \\  &\  \leq \frac{ L_\FF}{\tau} \int_0^t \,  L_{\HH} \int_0^s \|\bar y^\tau(\zeta)-\bar y(\zeta)-\tau \dy (\zeta) \|_{Y}\,d\zeta   +  L_g\,\|\bar y^\tau(s)-\bar y(s)-\tau \dy  (s) \|_{Y}\,ds
  \\&\qquad + \int_0^t A_\tau(s) \,ds
       \\  &\  \leq (T\,{ L_\FF}  L_{\HH} +L_g) \int_0^t \Big\|\frac{\bar y^\tau(s)-\bar y(s)-\tau \dy  (s) }{\tau}  \Big\|_{Y}\,ds + \int_0^t A_\tau(s) \,ds \qquad \forall\,t \in [0,T],
 \end{aligned}
 \end{equation}
where we employed the definitions of $\widehat G$, of $f$, and  the fact that $\FF:X \times Y^\ast \to Y$ is Lipschitzian according to Lemma \ref{lip_F}, as well as Assumptions \ref{assu:st}.\ref{it:st_h} and \ref{assu:st}.\ref{it:st3}.
Applying Gronwall's inequality in \eqref{eq:awalin-} then yields 
   \begin{equation}\label{eq:wwww}
\Big\|\Big(\frac{\bar y^\tau-\bar y}{\tau}-\dy \Big)(t)\Big\|_Y \leq c\,\int_0^t A_\tau(s) \,ds \qquad \forall\,t \in [0,T],
 \end{equation}where $c>0$ is a constant dependent only on the given data. Now, \eqref{eq:www}  and estimating  as in \eqref{eq:awalin-}, in combination with \eqref{eq:wwww}, leads to    \begin{equation}\label{eq:ww}
\Big\|\frac{\bar y^\tau-\bar y}{\tau}-\dy \Big\|_{H^1(0,T;Y)} \leq c\,\|A_\tau\|_{L^2(0,T)} \quad \forall\,\tau >0.
 \end{equation}
 On the other hand, we recall the definition of $A_\tau$ in  \eqref{eq:awalin-} and the fact that $\widehat G:\lly \times L^2(0,T;Z) \to \lly$ is directionally differentiable at $(\yy,\ll)$, from which we deduce $$\|A_\tau\|_{L^2(0,T)} \to 0 \quad \text{as } \tau \searrow 0.$$
Finally, the desired assertion follows from \eqref{eq:ww}. The Hadamard directional differentiability \cite[Def.\ 3.1.1]{schirotzek} is due to Proposition \ref{prop:lip} and  \cite[Lem.\ 3.1.2(b)]{schirotzek}. This completes the proof.  \end{proof}

\begin{remark}\label{p}
All the results established in this subsection are valid for right-hand sides $\ell \in L^p(0,T;Z)$, where $1 \leq p \leq \infty$, in which case the unique solution to \eqref{eq:ode} belongs to $W^{1,p}_0(0,T;Y)$ (provided that the history operator $\HH$ maps between $L^p(0,T)$ spaces).
\end{remark}

\section{Strong stationarity for the control of viscous damage models with fatigue}\label{sec:dm}
Based on the results from the previous sections, we next derive strong stationary optimality conditions for the  control of two viscous damage models with fatigue. 

The underlying \textit{non-viscous} damage problem with fatigue reads as follows:
\begin{equation}\label{eq:n}
\left.
  \begin{gathered}
-\partial_q \EE(t,q(t)) \in \partial_2 \RR (H(q)(t),\dot{q}(t)) \quad \text{in }\hoo , \quad q(0) = 0
  \end{gathered}\ \right\}
  \end{equation}a.e.\ in $(0,T)$, cf\, \cite{alessi}.    In \eqref{eq:n}, $\EE:[0,T] \times H^{1}(\Omega) \rightarrow \mathbb{R}$ is the stored energy; this will be specified in the upcoming subsections, depending on the setting.
   The non-viscous dissipation   $\RR:L^2(\O) \times  H^1(\O) \rightarrow (-\infty,\infty]$ is defined as \begin{equation}\label{def:rrr}
\RR(\zeta,\eta):=\left\{ \begin{aligned}\int_\Omega \kappa(\zeta) \,  \eta \;dx , &\quad \text{if }\eta \geq 0 \text{ a.e. in }\Omega,
\\\infty  &\quad \text{otherwise.}\end{aligned} \right.
\end{equation}

The differential inclusion appearing in \eqref{eq:n} describes the evolution of the damage variable $q$
under \textit{fatigue} effects. Therein, $H$ is a so-called \textit{history operator}  that models how the  damage experienced by the material affects its fatigue level. 
The fatigue degradation mapping $\kappa:\R \to \R$ appearing in \eqref{def:rrr} indicates in which measure the fatigue affects the fracture toughness  of the material.  Whereas usually the toughness of the material  is described by a fixed  (nonnegative) constant \cite{fn96, FK06}, in the present model it changes at each point in time and space, depending on $H(q)$. To be more precise, the value of the fracture toughness of the body at $(t,x)$ is given by  $\kappa(H(q))(t,x)$, cf.\ \eqref{def:rrr}. Hence, the model \eqref{eq:n} takes into account the following crucial aspect: the occurrence of damage is favoured in regions where fatigue accumulates.

\begin{assumption}\label{assu:standd}For the mappings associated with fatigue  in \eqref{eq:n} we require the following:
 \begin{enumerate}
  \item\label{it:stand1d}  The \textit{history operator} $H:\llz \to \llz$ satisfies 
$$\|H(y_1)(t)-H (y_2)(t)\|_{\lo} \leq L_H\, \int_0^t \|y_1(s)-y_2(s)\|_{\lo} \,ds \quad \ae  (0,T),$$for all  $y_1,y_2 \in \llz$, where $L_H>0$ is a positive constant.   {Moreover, $ H:  \llz \to \llz$ is supposed to be G\^ateaux-differentiable.}
\item\label{it:stand2d}
The non-linear function  $\kappa: \R \to \R$ is assumed to be Lipschitz continuous with Lipschitz constant $L_\kappa>0$ and  differentiable.  
 \end{enumerate}
\end{assumption} 

\subsection{$H^1_0-$viscosity}\label{sec:0}
The model we intend to examine describes the evolution of damage under the influence of a time-dependent load $\ell:[0,T] \to H^{-1}(\Omega)$ (control) acting on a body occupying the bounded Lipschitz domain $\O \subset \R^N$, $N \in \{2,3\}$. The induced  damage is expressed in terms of $q : [0,T] \to H^{1}_0(\Omega)$. 
The problem we consider is a viscous version of the fatigue damage model addressed in \cite{alessi}; for simplicity reasons, we do not take a displacement variable into account. Let us mention that $H_0^1-$ viscosity has been used in the context of optimal control  in \cite{sww} as well, see \cite[Eq.\,(5)]{sww}, where a sweeping process is considered. For more details on the viscous approximation of damage models we refer the reader to \cite[Sec.\,4.4]{mielke}.

The viscous (single-field) damage problem with fatigue reads as follows:
\begin{equation}\label{eq:q1}
-\partial_q \EE(t,q(t)) \in \partial_2 \RR_\epsilon (H(q)(t),\dot{q}(t))\quad \text{in }\hoon  , \quad q(0) = 0
  \end{equation}a.e.\ in $(0,T)$.   In \eqref{eq:q1}, the stored energy $\EE:[0,T] \times H_0^{1}(\Omega) \rightarrow \mathbb{R}$ is given by
\begin{equation}\label{eq:e1}\begin{aligned}
\EE(t, q)&:=\frac{\alpha}{2}\|\nabla q\|_{L^2(\O)}^2-\dual{\ell(t)}{q}_{H^{1}(\Omega)},
\end{aligned}\end{equation}where $\alpha>0$ is a fixed parameter.
  The viscous dissipation  $\RR_\epsilon:L^2(\O) \times  H^1_0(\O) \rightarrow (-\infty,\infty]$ is defined as \begin{equation}\label{def:r1}
\RR_\epsilon(\zeta,\eta):=\left\{ \begin{aligned}\int_\Omega \kappa(\zeta) \,  \eta \;dx +\frac{\epsilon}{2}\|\nabla \eta\|^2_{\lo}, &\quad \text{if }\eta \geq 0 \text{ a.e. in }\Omega,
\\\infty  &\quad \text{otherwise,}\end{aligned} \right.
\end{equation}
where  $\epsilon>0$ is the viscosity parameter.
\begin{definition}\label{def:c}
The polyhedric set $\CC \subset \hon$ is defined as 
 $$\CC:=\{v \in \hon: v\geq 0 \text{ \ae} \O\}.$$
\end{definition}

In all what follows, the \bl{(metric)} projections onto a subset of $\hoon$ are considered w.r.t.\ the \bl{inner product $\dual{\frac{1}{\epsilon}(-\Delta)^{-1}\cdot }{\cdot }_{\hoon}$}, unless otherwise specified; cf.\,also \eqref{eq:min}.

\begin{lemma}\label{lem:proj}
For each $(\zeta, \omega) \in \lo \times \hoon$ it holds
$$P_{\partial_2 \RR(\zeta,0)}\omega=P_{\partial I_\CC (0)}(\omega-\kappa(\zeta))+\kappa(\zeta) \quad \text{in }\hoon.$$
\end{lemma}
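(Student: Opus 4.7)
My approach is two-step: first identify $\partial_2 \RR(\zeta, 0)$ as the affine translate $\kappa(\zeta) + \partial I_\CC(0)$, and then invoke the translation invariance of the metric projection onto translates of closed convex sets. The resulting formula is then immediate.

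For the first step, I observe that for fixed $\zeta \in \lo$ the functional $\RR(\zeta,\cdot)$ defined in \eqref{def:rrr}, restricted to $\hon$, decomposes as
$$ \RR(\zeta,\eta) \;=\; \dual{\kappa(\zeta)}{\eta}_{\hon} + I_\CC(\eta), $$
where $\kappa(\zeta) \in \lo \hookrightarrow \hoon$ is regarded as a dual element via the continuous embedding, and $I_\CC$ is the indicator of the closed convex cone $\CC$. Since the first summand is continuous and linear while $I_\CC$ is proper, convex, and lower semicontinuous, the Moreau--Rockafellar sum rule applies at $\eta = 0$ and yields
$$ \partial_2 \RR(\zeta, 0) \;=\; \kappa(\zeta) + \partial I_\CC(0). $$
This identity can also be verified directly from the characterization of the subdifferential in Definition \ref{proj}: $\varphi \in \partial_2 \RR(\zeta, 0)$ iff $\dual{\varphi-\kappa(\zeta)}{v}_{\hon} \leq 0$ for all $v \in \CC$, i.e.\ $\varphi - \kappa(\zeta) \in \CC^\circ = \partial I_\CC(0)$, where the last equality holds because $\CC$ is a convex cone.

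For the second step, for any closed convex set $K \subset \hoon$, any vector $a \in \hoon$, and any equivalent Hilbert inner product on $\hoon$, the substitution $\mu = a + \nu$ in the minimization problem \eqref{eq:min} reduces $\min_{\mu \in a+K} \tfrac{1}{2}\|\omega - \mu\|_{\V^{-1}}^2$ to $\min_{\nu \in K} \tfrac{1}{2}\|(\omega - a) - \nu\|_{\V^{-1}}^2$, which immediately yields the standard translation identity
$$ P_{a+K}(\omega) \;=\; a + P_K(\omega - a). $$
Taking $a = \kappa(\zeta)$ and $K = \partial I_\CC(0)$, with the inner product $\dual{\tfrac{1}{\epsilon}(-\Delta)^{-1}\cdot}{\cdot}_{\hoon}$, delivers the claimed formula. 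I expect no substantive obstacle here: the only points requiring care are the identification $\kappa(\zeta) \in \lo \hookrightarrow \hoon$ needed to make the sum $\kappa(\zeta) + \partial I_\CC(0)$ meaningful, and the fact that $\partial I_\CC(0) = \CC^\circ$ is closed and convex so that the projection is well defined, both of which are routine.
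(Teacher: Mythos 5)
Your proposal is correct and follows essentially the same route as the paper: the identity $\partial_2 \RR(\zeta,0)=\kappa(\zeta)+\partial I_\CC(0)$ (which the paper obtains by exactly the direct subdifferential computation you give as your alternative verification) combined with the translation invariance $P_{a+K}(\omega)=a+P_K(\omega-a)$ of the metric projection. The Moreau--Rockafellar sum rule is a harmless extra justification but adds nothing beyond the paper's argument.
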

\begin{proof}
We observe that
\begin{equation}\label{subdifff}
\begin{aligned}
\partial_2 \RR(\zeta,0)&=\{\mu \in \hoon | \,  \dual{\mu}{v}_{\hon} \leq \RR(\zeta,v) -\RR(\zeta,0) \quad \forall\,v\in \hon\}
\\&=\{\mu \in \hoon | \,  \dual{\mu-\kappa(\zeta)}{v}_{\hon} \leq 0 \quad \forall\,v\in \hon \text{ with } v\geq 0 \text{ a.e.\ in }\O\}
\\&=\partial I_\CC(0)+\kappa(\zeta) \quad \forall\,\zeta \in \lo,\end{aligned}\end{equation}where the second identity in \eqref{subdifff} is due to \eqref{def:rrr}. Moreover, given a closed convex set $M \subset \hoon$, it holds $$P_M \omega=P_{M-\psi}(\omega -\psi)+\psi \quad \forall\,\o,\psi \in \hoon, $$which can be easily checked by employing the definition of the projection operator. The desired assertion now follows from \eqref{subdifff}.
\end{proof}
\begin{proposition}[Control-to-state map]\label{lem:ode1}
For every $\ell \in \llun$, the viscous damage problem with fatigue \eqref{eq:q1} admits a unique solution $q \in   \hhyy $, which is characterized by  \begin{equation}\label{eq:syst_diff1}
 \dot q(t)   = \frac{1}{\epsilon} (-\Delta)^{-1}(\mathbb{I}-P_{\partial I_\CC (0)})(\alpha \Delta q(t) +\ell(t)-\F(q)(t)) \quad \text{in }\hon,  \quad q(0) = 0
 \end{equation} a.e.\ in $(0,T)$.
   \end{proposition}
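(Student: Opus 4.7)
\begin{proofof}{Proposition 4.2 (plan)}
The plan is to recognize \eqref{eq:q1} as a particular instance of the viscous history-dependent EVI \eqref{eq:evi} and then invoke Theorem \ref{thm:ode} together with the projection formula of Lemma \ref{lem:proj}. Concretely, I would choose the Hilbert space $Y:=\hon$, the auxiliary space $X:=\lo$, the control space $Z:=\hoon$, the viscosity operator $\V:=-\eps \Delta \in \LL(\hon,\hoon)$ (which is self-adjoint and coercive with $\vartheta=\eps$ in view of the Poincar\'e inequality), the history operator $\HH:=H$ inherited from Assumption \ref{assu:standd}.\ref{it:stand1d}, the affine driving force $g(y,\ell):=\alpha\Delta y+\ell$, and the purely non-smooth part of the dissipation
\[
R(\zeta,\eta):=\int_\Omega \kappa(\zeta)\,\eta\,dx + I_\CC(\eta),\qquad (\zeta,\eta)\in\lo\times\hon.
\]
With this splitting $\RR_\eps(\zeta,\eta)=R(\zeta,\eta)+\tfrac{\eps}{2}\|\nabla\eta\|_{\lo}^2$, so that the convex subdifferential w.r.t.\ $\eta$ satisfies $\partial_2\RR_\eps(\zeta,\dot q)=\partial_2 R(\zeta,\dot q)+\{-\eps\Delta\dot q\}$.

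The first block of work is verifying Assumption \ref{assu:st}. Positive homogeneity, properness, convexity, and lower semicontinuity of $R(\zeta,\cdot)$ are immediate from its definition. The mixed Lipschitz estimate \ref{assu:st}.\ref{it:st12} reduces, for $\eta_i\in\dom R(\zeta_i,\cdot)\subset\CC$, to
\[
\int_\Omega(\kappa(\zeta_1)-\kappa(\zeta_2))(\eta_2-\eta_1)\,dx\leq L_\kappa\|\zeta_1-\zeta_2\|_{\lo}\|\eta_1-\eta_2\|_{\lo},
\]
which combined with Poincar\'e yields the requested bound in $\|\cdot\|_Y=\|\nabla\cdot\|_{\lo}$. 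Assumption \ref{assu:st}.\ref{it:st_h} is exactly Assumption \ref{assu:standd}.\ref{it:stand1d} together with the continuous embedding $\hon\embed\lo$. Assumption \ref{assu:st}.\ref{it:st3} is clear since $g$ is affine and continuous from $\hon\times\hoon$ to $\hoon$.

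Once the fit is in place, the differential inclusion \eqref{eq:q1} translates, by the definition of the convex subdifferential and the decomposition above, into
\[
R(H(q),\eta)-R(H(q),\dot q)+\dual{\V\dot q}{\eta-\dot q}_{\hon}\geq\dual{g(q,\ell)}{\eta-\dot q}_{\hon}\quad\forall\,\eta\in\hon,
\]
a.e.\ in $(0,T)$, which is precisely \eqref{eq:evi}. Theorem \ref{thm:ode} then yields at the same time (i) unique solvability of \eqref{eq:q1} in $\hhyy$ for every $\ell\in\llun$ and (ii) equivalence with the non-smooth ODE $\dot q=\FF(H(q),g(q,\ell))$ a.e.\ in $(0,T)$, $q(0)=0$, where $\FF(\zeta,\omega)=\V^{-1}(\omega-B(\zeta,\omega))$ and $B(\zeta,\omega)=P_{\partial_2 R(\zeta,0)}\omega$. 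Substituting $\V^{-1}=\tfrac{1}{\eps}(-\Delta)^{-1}$ and, crucially, the identity $P_{\partial_2 R(\zeta,0)}\omega=P_{\partial I_\CC(0)}(\omega-\kappa(\zeta))+\kappa(\zeta)$ from Lemma \ref{lem:proj} gives
\[
\omega-B(\zeta,\omega)=(\mathbb{I}-P_{\partial I_\CC(0)})(\omega-\kappa(\zeta)),
\]
and plugging in $\omega=g(q,\ell)=\alpha\Delta q+\ell$ and $\zeta=H(q)$, $\kappa(\zeta)=\F(q)$, produces exactly the claimed ODE \eqref{eq:syst_diff1}.

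The only genuinely delicate step is the subdifferential splitting $\partial_2\RR_\eps=\partial_2 R+\{-\eps\Delta\cdot\}$, for which I would invoke the Moreau--Rockafellar sum rule, justified because the quadratic viscous term $\tfrac{\eps}{2}\|\nabla\cdot\|_{\lo}^2$ is finite and continuous on all of $\hon$, so its domain meets the relative interior of $\dom R(\zeta,\cdot)=\CC$. Everything else is bookkeeping once the spaces and operators are matched against Assumption \ref{assu:st}.
\end{proofof}
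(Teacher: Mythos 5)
Your proposal is correct and follows essentially the same route as the paper's proof: split $\RR_\eps$ into the non-smooth part $\RR$ plus the quadratic viscous term, match the data against Assumption \ref{assu:st} with $Y=\hon$, $X=\lo$, $\V=-\eps\Delta$, $g(q,\ell)=\alpha\Delta q+\ell$, invoke Theorem \ref{thm:ode}, and convert the resulting projection via Lemma \ref{lem:proj}. Your explicit justification of the subdifferential sum rule (continuity of the quadratic term on all of $\hon$) is a point the paper passes over silently, but it changes nothing in substance.
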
 
 \begin{proof}
 In view of \eqref{def:r1}, \eqref{def:rrr} and the sum rule for convex subdifferentials, it holds
\[\partial_2 \RR_\epsilon (H(q)(t),\dot{q}(t))=\partial_2 \RR (H(q)(t),\dot{q}(t))-\epsilon\, \Delta \dot{q}(t)\]
a.e.\ in $(0,T)$. Thus, on account of \eqref{eq:e1}, the evolution \eqref{eq:q1} is  equivalent to 
    \begin{equation}\label{eq:q_red}  
    \RR(H(q)(t),v)-\RR(H(q)(t),\dot{q}(t))+\epsilon\,(\nabla \dot{q}(t),\nabla (v-\dot{q}(t)))_{\lo}  \geq \dual{\alpha \Delta q(t) +\ell(t)}{v-\dot{q}(t)}_{H_0^1(\O)}          \end{equation}for all $v \in \hon$.
Now, with the notations from section \ref{sec:evi}, we see that if we set
       \begin{subequations}\label{eq:assu}\begin{gather}
        X:=\lo,\quad Y:=\hon,\quad Z:=\hoon, \\
       R:=\RR, \quad \HH:=H,\quad \V:=-\epsilon\, \Delta, \quad g( \widetilde q, \widetilde \ell):=\alpha \Delta \widetilde q+\widetilde \ell, \end{gather}
 \end{subequations}then \eqref{eq:evi} coincides with \eqref{eq:q_red}. Indeed, the quantities in \eqref{eq:assu} satisfy Assumption \ref{assu:st}, as we will next see. While Assumption \ref{assu:st}.\ref{it:st11} can be easily checked, the condition in Assumption \ref{assu:st}.\ref{it:st12} is due to
 \begin{align*}
R(\zeta_1,\eta_2)-& R(\zeta_1,\eta_1)+R(\zeta_2,\eta_1)-R(\zeta_2,\eta_2)
\\&= \int_\Omega \kappa(\zeta_1) \,(\eta_2-\eta_1) \;dx-\int_\Omega \kappa(\zeta_2) \,(\eta_2-\eta_1) \;dx
\\&\quad \leq L_{\kappa} \,\|\zeta_1-\zeta_2\|_{\lo}\|\eta_1-\eta_2\|_{\hon} 
   \\& \quad \quad \qquad  \qquad \forall\,\zeta_1,\zeta_2 \in \lo,\ \forall\,\eta_1, \eta_2 \in  \CC.
   \end{align*}Note that $\dom R(\zeta,\cdot)=\CC$ for all $\zeta \in \lo$ and that the above inequality follows from Assumption \ref{assu:stand}.\ref{it:stand2d}. Further, in view of $\hon \embed \lo$ and Assumption \ref{assu:stand}.\ref{it:stand1d}, we see that Assumption \ref{assu:st}.\ref{it:st_h} is satisfied as well. Moreover, we observe that Assumption \ref{assu:st}.\ref{it:st2} (with $\vartheta=\epsilon$) and Assumption \ref{assu:st}.\ref{it:st3} also hold true in the setting \eqref{eq:assu}.

 Since the quantities in \eqref{eq:assu} satisfy the entire Assumption \ref{assu:st} and since the evolution in \eqref{eq:q1} is equivalent to \eqref{eq:q_red} we immediately obtain from Theorem \ref{thm:ode} the unique solvability of \eqref{eq:q1} and the desired regularity of the solution. Moreover, according to Theorem \ref{thm:ode}, \eqref{eq:q_red} is equivalent to
   \begin{equation*}
   \dot q(t)=\FF \big(H(q)(t),g(q,\ell)(t)\big) \ \text{ a.e.\ in }(0,T),\end{equation*}
   where 
      \begin{equation}\label{FF}
   \FF(\zeta, \omega)=\frac{1}{\epsilon} (-\Delta)^{-1}(\mathbb{I}-P_{\partial_2 \RR(\zeta,0)})\omega \quad \forall\,  (\zeta,\omega) \in \lo \times \hoon.
   \end{equation}
    That is, \eqref{eq:q_red}, and thus, the evolution in \eqref{eq:q1}, can be rewritten as
    \begin{equation}\label{ode}
   \dot q(t)=\frac{1}{\epsilon} (-\Delta)^{-1}(\mathbb{I}-P_{\partial_2 \RR(H(q)(t),0)})\big(g(q,\ell)(t)\big)\ \text{ a.e.\ in }(0,T).\end{equation}
 
 Applying Lemma \ref{lem:proj} now yields that 
 $$P_{\partial_2 \RR(H(q)(t),0)}\big(g(q,\ell)(t)\big)=P_{\partial I_\CC (0)}\big[g(q,\ell)(t)-\F(q)(t)\big]+\F(q)(t),$$
which inserted in \eqref{ode} gives in turn
  \begin{equation*}
   \dot q(t)=\frac{1}{\epsilon} (-\Delta)^{-1}\big(g(q,\ell)(t)-P_{\partial I_\CC (0)}\big[g(q,\ell)(t)-\F(q)(t)\big]-\F(q)(t)\big)\ \text{ a.e.\ in }(0,T).\end{equation*}
In view of the definition of the mapping $g$ in \eqref{eq:assu} we can finally deduce that \eqref{eq:q1} is equivalent to \eqref{eq:syst_diff1}.
 \end{proof}
 \begin{lemma}\label{f_dir}
The mapping $ f: \hoon \to \hon$ defined as $$ f( \omega)= \frac{1}{\epsilon} (-\Delta)^{-1} (\mathbb{I}-P_{\partial I_\CC (0)})(\omega)$$ is directionally differentiable with
$$ f'( \omega;\delta \omega)= \frac{1}{\epsilon} (-\Delta)^{-1} (\mathbb{I}-P_{T(\omega)})(\delta \omega) \quad \forall\,\omega,\delta \omega \in \hoon,$$
where $T(\o):=\overline{\R^+( \partial I_\CC(0 )-P_{\partial I_\CC(0)}\o)}^{\hoon} \cap [\psi(\omega)]^{\perp}$ denotes {the critical cone of $\partial I_\CC(0 ) \subset \hoon$ at $(\omega,\psi(\omega))$} and $\psi(\omega):=\frac{1}{\epsilon} (-\Delta)^{-1}(\omega-P_{\partial I_\CC(0)}\omega).$
 \end{lemma}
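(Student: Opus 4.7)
The plan is first to recognize $f$ as a particular instance of the operator $\FF$ from Section~\ref{sec:evi}: with the choice $Y = \hon$, $\V = -\epsilon\,\Delta$ (so that $\V^{-1} = \tfrac{1}{\epsilon}(-\Delta)^{-1}$) and $\zeta$-independent dissipation $R = I_{\CC}$, one has $\partial_2 R(0) = \partial I_{\CC}(0)$ and consequently $f = \FF$ as given in \eqref{eq:f}. Lemma~\ref{lem:f} then identifies $z = f(\omega)$ with the unique solution of the classical obstacle problem
\begin{equation*}
z \in \CC, \qquad \epsilon\,(\nabla z,\nabla(v-z))_{\lo} \geq \dual{\omega}{v-z}_{\hon} \quad \forall\, v\in \CC,
\end{equation*}
and Corollary~\ref{cor:fp} reduces the claim to showing that the projection $B(\omega) := P_{\partial I_{\CC}(0)}\omega$ is directionally differentiable at $\omega$ with $B'(\omega;\delta\omega) = P_{T(\omega)}(\delta\omega)$.

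For the second step I would invoke the classical Mignot--Haraux theorem, which asserts that the metric projection onto a closed convex polyhedric subset of a Hilbert space is Hadamard directionally differentiable, with derivative equal to the projection onto the critical cone. To apply it here one has to verify that $\partial I_{\CC}(0) = \CC^{\circ}$ is polyhedric in $\hoon$ endowed with the inner product $\langle\cdot,\cdot\rangle_\epsilon := \tfrac{1}{\epsilon}\dual{(-\Delta)^{-1}\cdot}{\cdot}_{\hoon}$. This follows by transferring the well-known polyhedricity of $\CC$ in $\hon$ through the isometric isomorphism $-\Delta : (\hon,\tfrac{1}{\epsilon}(\cdot,\cdot)_{\hon}) \to (\hoon, \langle\cdot,\cdot\rangle_\epsilon)$, which carries the orthogonal complement $\CC^{\perp_{\hon}} = (-\Delta)^{-1}(\CC^{\circ})$ onto $\CC^{\circ}$; Moreau decomposition in $\hon$ then relates $P_{\CC^{\perp_{\hon}}}$ to the classical obstacle projection $P_{\CC}$ and propagates polyhedricity from the latter.

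Once $B'(\omega;\delta\omega) = P_{T(\omega)}(\delta\omega)$ has been established, it only remains to reconcile the critical cone with the formula in the statement. By construction, the $\langle\cdot,\cdot\rangle_\epsilon$-orthogonality condition $\langle\omega - P_{\partial I_{\CC}(0)}\omega,\mu\rangle_\epsilon = 0$ rewrites as $\dual{\psi(\omega)}{\mu}_{\hoon} = 0$ and therefore coincides with the annihilator $[\psi(\omega)]^{\perp}$ used in the statement, while the radial/tangent-cone part $\overline{\R^+(\partial I_\CC(0) - P_{\partial I_\CC(0)}\omega)}^{\hoon}$ is inherited directly from the definition of the critical cone. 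Substituting $B'(\omega;\delta\omega) = P_{T(\omega)}(\delta\omega)$ into the formula supplied by Corollary~\ref{cor:fp} then yields the desired expression $f'(\omega;\delta\omega) = \tfrac{1}{\epsilon}(-\Delta)^{-1}(\mathbb{I} - P_{T(\omega)})(\delta\omega)$. I expect the only real delicate point to be the careful bookkeeping of the non-standard inner product on $\hoon$ and the transfer of polyhedricity via the isometry; everything else is a direct application of the classical theory of obstacle problems.
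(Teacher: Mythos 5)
Your proposal is correct and takes essentially the same route as the paper: reduce the claim (via Corollary~\ref{cor:fp}) to the directional differentiability of $P_{\partial I_\CC(0)}$ with respect to the $\frac{1}{\epsilon}\dual{(-\Delta)^{-1}\cdot}{\cdot}_{\hoon}$ inner product, and obtain that from the polyhedricity of $\partial I_\CC(0)=\CC^\circ$ together with the Mignot--Haraux theorem, identifying $[\psi(\omega)]^{\perp}$ with the relevant orthogonality condition exactly as you do. The only difference is that where you sketch the transfer of polyhedricity from $\CC$ in $\hon$ to $\CC^\circ$ in $\hoon$ via the isometry $-\Delta$ and the Moreau decomposition, the paper simply cites this fact from the literature (\cite{wachs}, \cite{bs}) and quotes the projection-differentiability statement from \cite{st_coup}.
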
\begin{proof}
 The assertion is due to the polyhedricity of ${\partial I_\CC (0)}=\CC^\circ \subset \hoon$  (as a result of \cite[Lem.\ 3.2]{wachs} and e.g.\,\cite[Cor.\ 6.46]{bs}) in combination with \cite[Lemma A.1]{st_coup}, which tells us that $P_{\partial I_\CC(0)}:\hoon \to \hoon$ is directionally differentiable with 
$$P_{\partial I_\CC(0)}'(\omega;\delta \omega)=P_{T(\omega)}\delta \omega \quad \forall\,\omega,\delta \omega \in \hoon.$$
 \end{proof}
\begin{proposition}[Directional differentiability]\label{lem:ode1_diff}
The solution map associated to \eqref{eq:q1} $$S: \llun \ni \ell \mapsto q \in   \hhyy$$ is directionally differentiable. 
Its directional derivative $\delta q:=S'(\ell;\dl)$ at the point $\ell  \in \llun$ in direction $\dl \in \llun$ is the unique solution of
\begin{equation}\label{eq:syst_dif}
 \dot {\delta q}(t)   = \frac{1}{\epsilon} (-\Delta)^{-1}(\mathbb{I}-P_{T(z(t))})\Big(\alpha \Delta \delta q(t) +\dl(t)-\F'(q)(\delta q)(t)\Big) \quad \text{in }\hon,  \quad \delta q(0) = 0
 \end{equation} a.e.\ in $(0,T)$. Here, $$T(z(t))=\overline{\R^+( \partial I_\CC(0 )-P_{\partial I_\CC(0 )}z(t))}^{\hoon} \cap [ \dot { q}(t)]^{\perp}$$ denotes {the critical cone of $\partial I_\CC(0 ) \subset \hoon$ at $(z(t),\dot q(t))$}, where
we abbreviate $ z(t):=\alpha \Delta q(t) +\ell(t)-\F(q)(t)$.      \end{proposition}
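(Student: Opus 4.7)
The strategy is to apply Theorem \ref{thm:ode_dir} in the setting \eqref{eq:assu} introduced in the proof of Proposition \ref{lem:ode1}. Since that proposition already verified Assumption \ref{assu:st} for the quantities in \eqref{eq:assu}, the only remaining hypothesis to check is the directional differentiability of the non-smooth nonlinearity $\FF:\lo \times \hoon \to \hon$ from \eqref{FF} at the relevant base point $(H(q)(t),g(q,\ell)(t))$ a.e.\ in $(0,T)$. Once this is established, Theorem \ref{thm:ode_dir} directly gives Hadamard directional differentiability of $S$ together with the linearized equation \eqref{eq:ode_lin}.

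The key simplification is that Lemma \ref{lem:proj} lets us rewrite
\begin{equation*}
\FF(\zeta,\omega)\;=\;\tfrac{1}{\epsilon}(-\Delta)^{-1}\bigl((\omega-\kappa(\zeta))-P_{\partial I_\CC(0)}(\omega-\kappa(\zeta))\bigr)\;=\;f(\omega-\kappa(\zeta)),
\end{equation*}
with $f:\hoon \to \hon$ the mapping from Lemma \ref{f_dir}. Thus $\FF$ is the composition of $f$ with the map $(\zeta,\omega)\mapsto \omega-\kappa(\zeta)$. Lemma \ref{f_dir} delivers directional differentiability of $f$, and Assumption \ref{assu:standd}.\ref{it:stand2d} (together with boundedness of $\kappa'$) yields differentiability of the Nemytskii map $\kappa:\lo\to\lo$; the chain rule then furnishes
\begin{equation*}
\FF'((\zeta,\omega);(\delta\zeta,\delta\omega))\;=\;\tfrac{1}{\epsilon}(-\Delta)^{-1}(\mathbb{I}-P_{T(\omega-\kappa(\zeta))})\bigl(\delta\omega-\kappa'(\zeta)\,\delta\zeta\bigr).
\end{equation*}

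Plugging $\zeta=H(q)$, $\omega=g(q,\ell)=\alpha\Delta q+\ell$, $\delta\zeta=H'(q;\delta q)$ and $\delta\omega=\alpha\Delta\delta q+\delta\ell$ into \eqref{eq:ode_lin}, and invoking the chain rule
$\kappa'(H(q))\,H'(q;\delta q)=\F'(q)(\delta q)$ for $\F=\kappa\circ H$, we arrive at exactly \eqref{eq:syst_dif}. It remains to identify the critical cone: with $z(t)=\alpha\Delta q(t)+\ell(t)-\F(q)(t)=g(q,\ell)(t)-\kappa(H(q))(t)$, the definition $\psi(\omega)=\tfrac{1}{\epsilon}(-\Delta)^{-1}(\omega-P_{\partial I_\CC(0)}\omega)$ from Lemma \ref{f_dir} combined with \eqref{eq:syst_diff1} gives $\psi(z(t))=f(z(t))=\FF(H(q)(t),g(q,\ell)(t))=\dot q(t)$, so that the orthogonality constraint $[\psi(z(t))]^{\perp}$ coincides with $[\dot q(t)]^{\perp}$, matching the statement. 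Uniqueness of $\delta q\in \hhyy$ follows from the Picard-type argument used in the proof of Theorem \ref{thm:ode_dir} for the linearized equation.

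The only delicate point I anticipate is justifying the chain rule for $\kappa\circ H$ rigorously at the level of $L^2(0,T;\lo)$ from the mere Lipschitz+pointwise differentiability of $\kappa$ and G\^ateaux differentiability of $H$; this will rely on the fact that Nemytskii operators generated by Lipschitz, pointwise differentiable $\kappa$ act and are directionally differentiable between $L^2$ spaces once one passes to subsequences converging a.e.\ (as in the proof of \eqref{eq:h'}), together with dominated convergence. Everything else is bookkeeping through the machinery of section \ref{sec:evi}.
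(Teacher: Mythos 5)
Your proposal is correct and follows essentially the same route as the paper: both reduce the claim to Theorem \ref{thm:ode_dir}, use Lemma \ref{lem:proj} to express the non-smoothness through $P_{\partial I_\CC(0)}$, invoke Lemma \ref{f_dir} together with the G\^ateaux differentiability of the Nemytskii operator $\kappa:\lo\to\lo$ and the chain rule, and finish by identifying $\psi(z(t))=\dot q(t)$ to obtain the critical cone in \eqref{eq:syst_dif}. The only cosmetic difference is that you differentiate $\FF(\zeta,\omega)=f(\omega-\kappa(\zeta))$ directly, whereas the paper differentiates the projection map $B$ and passes through Corollary \ref{cor:fp}; these are equivalent.
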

\begin{proof}According to Theorem \ref{thm:ode_dir}, $S: \llun \ni \ell \mapsto q \in   \hhyy$ is directionally differentiable, if $B: \lo \times \hoon \ni (\zeta, \omega) \mapsto P_{\partial_2 \RR(\zeta,0)}\omega \in \hoon$ does so. Thus, in the light of Lemma \ref{lem:proj} we  need to check that 
\begin{equation}\label{eq:bb}
B: \lo \times \hoon \ni (\zeta, \omega) \mapsto P_{\partial I_\CC (0)}(\omega-\kappa(\zeta))+\kappa(\zeta)  \in \hoon
\end{equation}
 is directionally differentiable. 
 In view of Assumption \ref{assu:standd}.\ref{it:stand2d} and by employing Lebesgue's dominated convergence theorem we obtain that $\kappa:\lo \to \lo$ is G\^ateaux-differentiable. As $P_{\partial I_\CC(0)}:\hoon \to \hoon$ is Lipschitz continuous and directionally differentiable, see  Lemma \ref{f_dir}, we can make use of the chain rule for directionally differentiable functions  \cite[Prop.\ 3.6(i)]{shapiro}. This implies that the mapping $B$ from \eqref{eq:bb} is indeed directionally differentiable with 
\begin{equation}\label{eq:bb_d}
B'((\zeta, \omega);(\delta \zeta, \delta \omega))= P_{T(\omega-\kappa(\zeta))}(\delta \omega-\kappa'(\zeta)(\delta \zeta))+\kappa'(\zeta)(\delta \zeta)\end{equation}
for all $(\zeta,\omega), (\delta \zeta,\delta \omega) \in \lo \times \hoon$.
By Theorem \ref{thm:ode_dir} we then obtain that $S: \llun \ni \ell \mapsto q \in   \hhyy$ is directionally differentiable with directional derivative $\delta q=S'(\ell;\dl)$ satisfying 
 \begin{equation} \label{eq:ode_linn}\begin{aligned}
 &  \dot {\delta q}(t)= \FF'\Big((H(q),g(q,\ell));\big(H'(q)(\delta q),g'((q,\ell){;}(\delta q,\dl))\big)\Big) (t)
   \\&=\frac{1}{\epsilon} (-\Delta)^{-1}[g'((q,\ell){;}(\delta q,\dl))(t)-B'((H(q), g(q,\ell));(H'(q)(\delta q),g'((q,\ell){;}(\delta q,\dl)))(t)] 
   \\&=\frac{1}{\epsilon} (-\Delta)^{-1}[\mathbb{I}   -P_{T(z(t))}][g'((q,\ell){;}(\delta q,\dl))(t)-\kappa'(H(q))(H'(q)(\delta q)) (t)] \quad 
\text{ \ae }(0,T),\end{aligned}\end{equation}
where $\FF$ is given by \eqref{FF} and $g$ is the mapping from \eqref{eq:assu}; recall that $z=\alpha \Delta  q+\ell-\F(q)$ and that $H$ is G\^ateaux-differentiable, by Assumption \ref{assu:standd}.\ref{it:stand1d}. Note that the second identity in \eqref{eq:ode_linn} is due to Corollary \ref{cor:fp}, while the last identity follows from \eqref{eq:bb_d}. Since $g'((q,\ell){;}(\delta q,\dl))=\alpha \Delta \delta q+\delta \ell$ and 
$$\psi(z(t))=\frac{1}{\epsilon} (-\Delta)^{-1}(z(t)-P_{\partial I_\CC(0)}z(t))=\dot q(t) \quad \text{\ae}(0,T),$$
cf.\,\eqref{eq:syst_diff1}, the proof is now complete.
\end{proof}

Next, we want to apply the strong stationarity result from section \ref{sec:pres} to the following
 optimal control problem:

\begin{equation}\tag{P}\label{eq:optprob_q1}
 \left.
 \begin{aligned}
  \min_{\ell \in \llz} \quad & \JJ(q,\ell)
  \\
 \text{s.t.} \quad & q \text{ solves }\eqref{eq:q1} \text{ with r.h.s.\ }\ell.
 \end{aligned}
 \quad \right\}
\end{equation}
In the sequel, the objective $\JJ$ is supposed to fulfill 
\begin{assumption}\label{assu:j_ex1}
  The functional $\JJ: \llu \times \llz \to \R$ 
  is  Fr\'echet-differentiable.\end{assumption}
  
Before stating the strong stationary optimality conditions we establish an estimate which will be useful in the proof of Theorem \ref{thm:ss_qsep1} below.
\begin{lemma}
 There exists a constant $K>0$ dependent only on the given data such that for all $\ell,\dl_1,\dl_2 \in \llun$ it holds 
 \begin{equation}\label{eq:bound2}
\|S'( \ell;\dl_1)-S'( \ell;\dl_2)\|_{L^2(0,T;\hon)} \leq K\, \|\dl_1-\dl_2\|_{L^2(0,T;\hoon)},\end{equation}where $S: \llun \to  \hhyy$ is the solution operator to \eqref{eq:q1}.
\end{lemma}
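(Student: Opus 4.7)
The plan is to exploit the explicit ODE characterization of $\delta q_i := S'(\ell;\dl_i)$ provided by Proposition \ref{lem:ode1_diff}, subtract the two equations, and close a pointwise estimate through Gronwall's inequality. Writing $\delta q := \delta q_1 - \delta q_2$, the integrand on the right-hand side of \eqref{eq:syst_dif} depends on $\dl_i$ only through the linear term $\alpha \Delta \delta q_i + \dl_i - \F'(q)(\delta q_i)$, so the difference gives
\begin{equation*}
\dot{\delta q}(t) = \tfrac{1}{\epsilon}(-\Delta)^{-1}\bigl[(\mathbb{I}-P_{T(z(t))})w_1(t) - (\mathbb{I}-P_{T(z(t))})w_2(t)\bigr],
\end{equation*}
with $w_i(t) := \alpha\Delta \delta q_i(t) + \dl_i(t) - \F'(q)(\delta q_i)(t)$ and $\delta q(0) = 0$.

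The first key ingredient is that $T(z(t))$ is a closed convex cone in $\hoon$, hence by Moreau's decomposition $\mathbb{I} - P_{T(z(t))}$ equals the projection onto the polar cone and is therefore non-expansive with respect to the inner product $\dual{\frac{1}{\epsilon}(-\Delta)^{-1}\cdot}{\cdot}_{\hoon}$. Combined with the fact that $(-\Delta)^{-1}:\hoon \to \hon$ is an isometry and that $\|\cdot\|_{\V^{-1}}$ is equivalent to $\|\cdot\|_{\hoon}$, this yields a bound
\begin{equation*}
\|\dot{\delta q}(t)\|_{\hon} \leq C\,\|w_1(t)-w_2(t)\|_{\hoon} \leq C\bigl(\alpha\|\delta q(t)\|_{\hon} + \|(\dl_1-\dl_2)(t)\|_{\hoon} + \|\F'(q)(\delta q)(t)\|_{\hoon}\bigr),
\end{equation*}
for a constant $C$ independent of the data. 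The second key ingredient handles the history term: since $\F = \kappa\circ H$ with $\kappa$ differentiable and Lipschitz and $H$ G\^ateaux-differentiable, the chain rule together with the estimate \eqref{eq:h'} and the embedding $\hon \hookrightarrow \lo \hookrightarrow \hoon$ gives
\begin{equation*}
\|\F'(q)(\delta q)(t)\|_{\hoon} \leq L_\kappa L_H \int_0^t \|\delta q(s)\|_{\hon}\,ds.
\end{equation*}

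Integrating the differential inequality from $0$ to $t$ using $\delta q(0)=0$, exchanging the order of integration in the iterated integral (so that $\int_0^t\int_0^s \|\delta q(\tau)\|_{\hon}\,d\tau\,ds \leq T\int_0^t \|\delta q(s)\|_{\hon}\,ds$), one obtains
\begin{equation*}
\|\delta q(t)\|_{\hon} \leq C_1\int_0^t \|\delta q(s)\|_{\hon}\,ds + C_2 \int_0^t \|(\dl_1-\dl_2)(s)\|_{\hoon}\,ds.
\end{equation*}
Gronwall's lemma then produces $\|\delta q(t)\|_{\hon} \leq C_2 e^{C_1 T}\int_0^t \|(\dl_1-\dl_2)(s)\|_{\hoon}\,ds$, and a final Cauchy--Schwarz inequality in time (followed by integration over $(0,T)$) yields \eqref{eq:bound2} with $K$ depending only on $\epsilon$, $\alpha$, $T$, $L_\kappa$, $L_H$ and the embedding constants.

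The main obstacle is bookkeeping rather than conceptual: one must carefully verify that the non-expansiveness of $\mathbb{I} - P_{T(z(t))}$ w.r.t.\ the $\V^{-1}$ inner product indeed translates into Lipschitz continuity from $\hoon$ to $\hon$ after composition with $(-\Delta)^{-1}$, and that the history contribution enters as a Volterra-type perturbation which Gronwall can absorb. Both issues are resolved by the cone structure of $T(z(t))$ and the linear growth estimate \eqref{eq:h'}, respectively.
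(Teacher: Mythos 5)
Your argument is correct, but it takes a genuinely different route from the paper. The paper's proof is a two-line reduction: since \eqref{eq:q1} fits the abstract setting of section \ref{sec:evi} (as verified in the proof of Proposition \ref{lem:ode1}), the nonlinear solution map $S$ is Lipschitz continuous from $L^2(0,T;\hoon)$ to $H^1_0(0,T;\hon)$ by Proposition \ref{prop:lip}; writing $S'(\ell;\dl_1)-S'(\ell;\dl_2)$ as the limit of $\tau^{-1}\bigl(S(\ell+\tau\dl_1)-S(\ell+\tau\dl_2)\bigr)$, which is legitimate by Proposition \ref{lem:ode1_diff}, the Lipschitz constant of $S$ is inherited directly and \eqref{eq:bound2} follows with no further computation. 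You instead re-derive the stability estimate at the level of the linearized system \eqref{eq:syst_dif}: subtracting the two equations, using that $\mathbb{I}-P_{T(z(t))}$ is non-expansive (which in fact holds for any closed convex set, so the Moreau/polar-cone observation is more than you need), controlling the history term via \eqref{eq:h'} and the Lipschitz bound on $\kappa'$, and closing with Gronwall. Both are sound; the paper's version buys brevity and avoids re-examining the projection and the Volterra term, while yours is self-contained at the level of the linearized equation and makes the dependence of $K$ on $\epsilon$, $\alpha$, $T$, $L_\kappa$, $L_H$ explicit --- it is essentially the same Gronwall computation that the paper performs once, inside the proof of Proposition \ref{prop:lip}, for the nonlinear system. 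One cosmetic remark: your intermediate claim that the constant $C$ in the pointwise bound is ``independent of the data'' is inaccurate (it carries the factor $1/\epsilon$), but since you later list $\epsilon$ among the dependencies of $K$, this does not affect the result.
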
\begin{proof}
In the proof of Proposition \ref{lem:ode1} we established that \eqref{eq:q1}  fits in the setting of section \ref{sec:evi} with the quantities from \eqref{eq:assu}. This means that $S: \llun \ni \ell \mapsto q \in   \hhyy$ is Lipschitz continuous according to Proposition \ref{prop:lip}. In view of Proposition \ref{lem:ode1_diff}, we can now conclude the desired estimate.
\end{proof}

The main result of this subsection reads as follows.
\begin{theorem}[Strong stationarity for the optimal control of the viscous damage model with fatigue]\label{thm:ss_qsep1}
 Let $\bar \ell \in \llz$ be locally optimal for \eqref{eq:optprob_q1} with associated state  $
  \bar q \in \hhyy.$   Then, there exists a unique adjoint state
$
  \xi \in H^{1}_T(0,T;\hoon)$ and a unique multiplier $\lambda \in L^{2}(0,T;H_0^1(\O))$ such that the following system is satisfied 
 \begin{subequations}\label{eq:strongstat_q1}
 \begin{gather}
  -\dot \xi-\alpha \Delta \lambda+[\F'(\bar q)]^\star \lambda= \partial_{q} \JJ(\bar q, \ll) \  \text{ in  }L^{2}(0,T;\hoon), \quad \xi(T) = 0,\label{eq:adjoint1_q1} \\[1mm]
\dual{\xi(t)}{  \frac{1}{\epsilon} (-\Delta)^{-1} (\mathbb{I}-P_{T(\bar z(t))})\,v}_{\hon}  \geq  \dual{\lambda(t)}{v}_{\hoon}   \quad \forall\, v\in \hoon, \;\text{a.e.\ in  }  \, (0,T),\label{eq:signcond_q1}
\\[1mm] \lambda+\partial_\ell \JJ(\bar q, \ll)=0\ \ \text{ in } L^2(0,T;\lo),\label{eq:grad_q1}
 \end{gather} 
 \end{subequations}    where we abbreviate $\bar z:=\alpha \Delta \bar q+\ll-\F(\bar q )$. Again, $$T(\bar z(t))=\overline{\R^+( \partial I_\CC(0 )-P_{\partial I_\CC(0 )}\bar z(t))}^{\hoon} \cap [ \dot { \bar q}(t)]^{\perp}$$ denotes {the critical cone of $\partial I_\CC(0 ) \subset \hoon$ at $(\bar z(t),\dot {\bar q}(t))$}.\end{theorem}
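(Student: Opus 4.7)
The plan is to recast \eqref{eq:optprob_q1} as an instance of the abstract coupled control problem \eqref{eq:optprob} from Section \ref{sec:pres} and then invoke Theorem \ref{thm:strongstat}. Guided by Proposition \ref{lem:ode1}, I will set $Y:=\hon$, $U:=V:=\lo$ (with the dummy identification $u:=\ell$ and $J(q,u,\ell):=\JJ(q,\ell)$), and take
\begin{equation*}
f(\omega):=\tfrac{1}{\epsilon}(-\Delta)^{-1}\bigl(\mathbb{I}-P_{\partial I_\CC(0)}\bigr)\omega,\quad \Phi(q,u):=\alpha\Delta q + u - \F(q),\quad \Psi(q,u):=u.
\end{equation*}
With these choices the state system \eqref{eq:pde} reduces exactly to the non-smooth ODE \eqref{eq:syst_diff1}, so \eqref{eq:optprob_q1} fits the abstract template.

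Next I have to verify the hypotheses of Theorem \ref{thm:strongstat}. Assumption \ref{assu:stand} is immediate: $V=U^{\ast}=\lo$ trivially densely, $\Phi$ and $\Psi$ are affine and bounded, $f$ is Lipschitz and directionally differentiable by Lemma \ref{f_dir}, and $J$ inherits Fr\'echet differentiability from Assumption \ref{assu:j_ex1}. Assumption \ref{assu:s_diff} is provided by Proposition \ref{lem:ode1} (solvability and ODE reformulation), Proposition \ref{lem:ode1_diff} (directional differentiability, with the linearised system \eqref{eq:syst_dif} coinciding with \eqref{eq:pde_lin} under the identifications), and the Lipschitz bound \eqref{eq:bound2} combined with positive homogeneity of $S'$ in its direction (which yields both the linear estimate and the continuity property required by \ref{assu:s_diff}.\ref{it:s3}). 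Assumption \ref{assu:surj} reduces to the observation that $\partial_u\Phi(\bar q,\bar u)$ is the pointwise inclusion $\lo\hookrightarrow\hoon$, so $\rg(\partial_u\Phi)=L^2(0,T;\lo)$, and density of $\lo$ in $\hoon$ transfers to the Bochner spaces.

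The substantive step is Assumption \ref{assu:reg}: with my choice $\partial_u J=0$ and $\partial_\ell J=\partial_\ell\JJ$, while $\partial_u\Phi^{\ast}$ and $\partial_u\Psi^{\ast}$ act respectively as the inclusion $\hon\hookrightarrow\lo$ and the identity on $\lo$, the multiplier equation collapses to $\lambda=-\partial_\ell\JJ(\bar q,\ll)$ in $L^2(0,T;\lo)$. The candidate $\lambda:=-\partial_\ell\JJ(\bar q,\ll)$ therefore settles \ref{assu:reg} provided it can be identified with an element of $L^2(0,T;\hon)$; this $\hon$-regularity of the partial gradient of $\JJ$ in $\ell$ is the genuinely restrictive ingredient and is the principal obstacle, reflecting the gap between the control space $\lo$ and the state space $\hon$. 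Once it is available, Theorem \ref{thm:strongstat} produces unique $\xi\in H^1_T(0,T;\hoon)$, $w\in L^2(0,T;\lo)$ and $\lambda\in L^2(0,T;\hon)$ solving \eqref{eq:adjoint1}--\eqref{eq:grad}, and the translation back is mechanical: self-adjointness of $\Delta$ and $\partial_y\Psi=0$ turn \eqref{eq:adjoint1} into \eqref{eq:adjoint1_q1}; \eqref{eq:adjoint2} collapses to $w=\lambda$ in $\lo$, which combined with \eqref{eq:grad} reproduces \eqref{eq:grad_q1}; and substituting the formula $f'(\bar z(t);v)=\tfrac{1}{\epsilon}(-\Delta)^{-1}(\mathbb{I}-P_{T(\bar z(t))})v$ from Lemma \ref{f_dir} into \eqref{eq:signcond} delivers \eqref{eq:signcond_q1}, completing the argument.
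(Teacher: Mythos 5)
Your reduction to the abstract framework of Section \ref{sec:pres} is exactly the paper's route: the same choices $Y:=\hon$, $U:=V:=\lo$, $f(\omega)=\tfrac{1}{\epsilon}(-\Delta)^{-1}(\mathbb{I}-P_{\partial I_\CC(0)})\omega$, $\Phi(q,u)=\alpha\Delta q+u-\F(q)$, $\Psi(q,u)=u$, the same verification of Assumptions \ref{assu:stand}, \ref{assu:s_diff} and \ref{assu:surj} via Lemma \ref{f_dir}, Propositions \ref{lem:ode1}, \ref{lem:ode1_diff} and the bound \eqref{eq:bound2}, and the same mechanical back-translation of \eqref{eq:strongstat} into \eqref{eq:strongstat_q1}.

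However, there is one genuine gap: you correctly identify that Assumption \ref{assu:reg} reduces to showing $\partial_\ell\JJ(\bar q,\ll)\in L^2(0,T;\hon)$, but you then leave this as a conditional hypothesis (``once it is available''), whereas the theorem assumes only Fr\'echet differentiability of $\JJ$ (Assumption \ref{assu:j_ex1}) and must therefore \emph{prove} this extra regularity. The paper closes this gap by exploiting the local optimality of $\ll$: the B-stationarity inequality \eqref{eq:vi} reads $\partial_q\JJ(\bar q,\ll)S'(\ll;\dl)+\partial_\ell\JJ(\bar q,\ll)\dl\geq 0$ for all $\dl\in L^2(0,T;\lo)$, and testing with $\pm\dl$ together with the estimate $\|S'(\ll;\dl)\|_{L^2(0,T;\hon)}\leq K\|\dl\|_{L^2(0,T;\hoon)}$ from \eqref{eq:bound2} shows that $\dl\mapsto\partial_\ell\JJ(\bar q,\ll)\dl$ is bounded with respect to the $L^2(0,T;\hoon)$-norm on the dense subspace $L^2(0,T;\lo)$; the Hahn--Banach theorem then extends it to an element of $L^2(0,T;\hoon)^\ast\cong L^2(0,T;\hon)$. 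In other words, the $\hon$-regularity of $\partial_\ell\JJ(\bar q,\ll)$ is not a restriction on the objective but a consequence of optimality combined with the smoothing property of the linearized solution map; without this step your argument does not establish the theorem as stated.
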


\begin{proof}
We aim to apply the strong stationarity result given by Theorem \ref{thm:strongstat} for the optimal control problem \eqref{eq:optprob_q1}. To this end, we have to check if \eqref{eq:optprob_q1} fits in the general setting from section \ref{sec:pres}, see Assumption \ref{assu:stand}. After that, we  verify Assumptions \ref{assu:s_diff}, \ref{assu:surj} and \ref{assu:reg}. 
Indeed, with the notations from section \ref{sec:pres}, we see that if we set
       \begin{subequations}\label{eq:assuu1}\begin{gather}
     V:=L^{2}(\Omega), \   Y:=H^{1}_0(\Omega),\  U:=L^{2}(\Omega), \  J:=\JJ, \\
                       f: Y^\ast \to Y, \quad f( \omega)= \frac{1}{\epsilon} (-\Delta)^{-1} (\mathbb{I}-P_{\partial I_\CC (0)})(\omega), \label{eq:as_f1} \\
            \Phi:L^2(0,T; Y \times U) \ni (q,u) \mapsto \alpha \Delta q +u-\F(q) \in L^2(0,T;Y^\ast), \label{eq:as_phi1}  \\ 
            \Psi : L^2(0,T; Y \times U) \ni(q,u)  \mapsto u \in L^2(0,T;  U^\ast ) \label{eq:as_psi1} , 
 \end{gather}
 \end{subequations}then \eqref{eq:optprob} coincides with \eqref{eq:optprob_q1}, thanks to Proposition \ref{lem:ode1}. Clearly, Assumption \ref{assu:stand}.\ref{it:stand1} is satisfied. Since $\F:\llz \to \llz$ is G\^ateaux-differentiable, see\,Assumption \ref{assu:standd}, the requirement in  Assumption \ref{assu:stand}.\ref{it:stand3} is fulfilled as well. Assumption \ref{assu:stand}.\ref{it:stand2} is satisfied by the mapping introduced in \eqref{eq:as_f1}, see Lemma \ref{f_dir}. Thus, the entire Assumption \ref{assu:stand} holds true in the setting \eqref{eq:assuu1}, cf.\ also Assumption \ref{assu:j_ex}.
 
 Moreover,  since for the setting considered in \eqref{eq:assuu1}, \eqref{eq:pde} is equivalent to \eqref{eq:syst_diff1}, we see that, in view of Proposition \ref{lem:ode1}, Assumption \ref{assu:s_diff}.\ref{it:s1} holds. The solution operator of \eqref{eq:pde} is given by $L^2(0,T;\lo) \ni \ell \mapsto (S(\ell),\ell) \in   \hhyy \times L^2(0,T;\lo)$, where $S: \llun \ni \ell \mapsto q \in   \hhyy$ is the solution operator of \eqref{eq:syst_diff1}. According to Proposition  \ref{lem:ode1_diff}, this is  directionally differentiable and its directional derivative $S'(\ell;\dl)$ at $\ell  $ in direction $\dl $ is the unique solution of \eqref{eq:syst_dif}. In light of Lemma \ref{f_dir}, this means that the pair $(\delta q,\delta u)=(S'(\ell;\dl),\dl)$ satisfies 
 \begin{equation*}\label{eq:syst_dif2}\begin{aligned}
 \dot {\delta q}(t)   = f'(\Phi(q,u)(t);\Phi'(q,u)(\delta q,\delta u)(t)) ,  \quad \delta q(0) = 0,
 \\\Psi'(q,u)(\delta q,\delta u)(t)=\delta \ell(t) \quad \text{\ae}(0,T).
\end{aligned} \end{equation*}that is, 
 \eqref{eq:pde_lin}. Thus, Assumption \ref{assu:s_diff}.\ref{it:s2} holds true.
Further, \eqref{eq:bound2} in combination with the embedding $\lo \embed \hoon$  implies that Assumption \ref{assu:s_diff}.\ref{it:s3} is verified as well; note that the second statement in Assumption \ref{assu:s_diff}.\ref{it:s3} is true in our setting, since $S'(\ell;\cdot):L^2(0,T;\lo) \to L^2(0,T;\hon)$ is continuous for any $\ell \in L^2(0,T;\lo)$.
Hence, the entire Assumption \ref{assu:s_diff} is true for the quantities in \eqref{eq:assuu}.

It remains to check that Assumptions \ref{assu:surj} and \ref{assu:reg} are guaranteed. To this end, we observe that 
  \begin{equation*}\label{eq:der_phi1}
\partial_{u}  \Phi (\bar q,\bar u)=\mathbb{I}:  \llz \to L^2(0,T;\hoon).
\end{equation*}
As a result of $\llz \dense L^2(0,T;\hoon)$, the 'constraint qualification' in Assumption \ref{assu:surj} is fulfilled. In the light of  \eqref{eq:as_phi1}-\eqref{eq:as_psi1}, the adjoints of the partial derivatives of $\Phi$ and $\Psi$ are given by  
\begin{equation}\label{eq:addj1}
\begin{aligned}
 \partial_{q}  \Phi (\bar q,\bar u)^\ast &= \alpha \Delta -[\F'(\bar q)]^\star: L^2(0,T;\hon)   \to   L^2(0,T;\hoon),\\
\partial_{u}  \Phi (\bar q,\bar u)^\ast &= \mathbb{I}: L^2(0,T;\hon)  \to   L^2(0,T;\lo),\\
\partial_{q}  \Psi (\bar q,\bar u)^\ast &= 0: L^2(0,T;\lo)  \to   L^2(0,T;\hoon),\\
\partial_{u}  \Psi (\bar q,\bar u)^\ast &= \mathbb{I}: L^2(0,T;\lo)   \to L^2(0,T;\lo).
\end{aligned}
\end{equation}
To see  that Assumption \ref{assu:reg} is true, we only need to check if $ \partial_\ell \JJ(\bar q, \ll)  \in  L^2(0,T;\hon)$ (cf.\,\eqref{eq:addj1} and note that $\partial_u \JJ=0$). To this end, we make use of \eqref{eq:vi}, which in the setting \eqref{eq:assuu1} reads
$$ \partial_{q} \JJ(\bar q,\ll) S'(\bar \ell; \delta \ell) + \partial_{\ell} \JJ(\bar q,\ll) \delta \ell \geq 0  \quad \forall\, \delta \ell \in L^2(0,T;L^2(\O)).$$
Since $$
\|S'( \ll;\dl)\|_{L^2(0,T;\hon)} \leq K\, \|\dl\|_{L^2(0,T;\hoon)} \quad \forall\,\dl \in L^2(0,T;\hoon),$$see \eqref{eq:bound2}, Hahn-Banach theorem now gives in turn that $ \partial_\ell \JJ(\bar q, \ll)  \in  L^2(0,T;\hon)$.

Thus, we can apply Theorem \ref{thm:strongstat}, which  in combination with \eqref{eq:addj1} tells us that there exist unique adjoint "states" $\xi \in H^1_T(0,T;\hoon)$ and $w \in L^2(0,T;\lo)$ and a unique multiplier $\lambda \in L^2(0,T;\hon)$ such that 
 \begin{subequations}\label{eq:strongstat_qq1}
 \begin{gather}
-\dot \xi-\alpha \Delta \lambda+[\F'(\bar q)]^\star \lambda= \partial_{q} \JJ(\bar q, \ll) \  \text{ in  }L^{2}(0,T;\hoon), \quad \xi(T) = 0,\label{eq:adjoint1_qq1} \\[1mm]
- \lambda+ w= 0 \  \text{ in  }L^{2}(0,T;\lo), \label{eq:adjoint1_qq2} \\[1mm]
\dual{\xi(t)}{ f' (\Phi (\bar q , \bar u)(t) ; v)}_{\hon}  \geq  \dual{\lambda(t)}{v}_{\hoon}   \quad \forall\, v\in \hoon, \;\text{a.e.\ in  }  \, (0,T), \label{eq:signcond_qq1} \\[1mm]
w+\partial_\ell \JJ(\bar q, \ll)=0\ \ \text{ in } L^2(0,T;L^2(\Omega)). \label{eq:grad_qq1}
 \end{gather} 
 \end{subequations}  
Inserting \eqref{eq:adjoint1_qq2} in \eqref{eq:grad_qq1} and employing Lemma \ref{f_dir} to compute the derivative in \eqref{eq:signcond_qq1} finally yields \eqref{eq:strongstat_q1}; recall that $\psi(z(t))=\dot q (t)$, see the end of the proof of Proposition \ref{lem:ode1_diff} and Lemma \ref{f_dir}. The proof is now complete.
\end{proof}

The optimality system in Theorem \ref{thm:ss_qsep1} is indeed of strong stationary type, as the next result shows:

\begin{theorem}[Equivalence between B- and strong stationarity]\label{thm:B_qsep1} Assume that $\bar \ell \in L^2(0,T;L^2(\O))$ together with its state $\bar q \in \hhyy $, some 
 adjoint state $\xi \in H^{1}_T(0,T;\hoon)$ and a multiplier $\lambda \in L^{2}(0,T;H_0^1(\O))$ 
 satisfy the optimality system \eqref{eq:strongstat_q1}.
 Then, it also satisfies the variational inequality 
  \begin{equation}\label{eq:vi_max_q1}
  \partial_{q} \JJ(\bar q,\ll) S'(\bar \ell; \delta \ell) + \partial_{\ell} \JJ(\bar q,\ll) \delta \ell \geq 0  \quad \forall\, \delta \ell \in L^2(0,T;L^2(\O)),
 \end{equation}where $S$ is the solution mapping associated to \eqref{eq:q1}, see Proposition \ref{lem:ode1}.
 \end{theorem}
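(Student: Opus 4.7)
The strategy is to reduce the claim to the abstract equivalence result Theorem~\ref{thm:equiv_B_strong} via the identification of \eqref{eq:optprob_q1} with the general control problem \eqref{eq:optprob} already carried out in the proof of Theorem~\ref{thm:ss_qsep1}. In that proof we verified that, with the quantities in \eqref{eq:assuu1}, all of Assumptions~\ref{assu:stand}, \ref{assu:s_diff}, \ref{assu:surj} and \ref{assu:reg} are satisfied; in particular, the two hypotheses required by Theorem~\ref{thm:equiv_B_strong} hold.

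Concretely, I would define $w:=\lambda\in L^2(0,T;\lo)$ (which makes sense because Assumption~\ref{assu:reg}, verified in the proof of Theorem~\ref{thm:ss_qsep1}, yields $\lambda\in L^2(0,T;\hon)\hookrightarrow L^2(0,T;\lo)$). Then I would check, using the explicit adjoint expressions in \eqref{eq:addj1}, that the concrete system \eqref{eq:strongstat_q1} is precisely the translation of the abstract system \eqref{eq:adjoint1}--\eqref{eq:grad} into the setting \eqref{eq:assuu1}: equation \eqref{eq:adjoint1_q1} is \eqref{eq:adjoint1} since $\partial_q\Phi^*=\alpha\Delta-[\F'(\bar q)]^\star$ and $\partial_q\Psi^*=0$; the abstract equation \eqref{eq:adjoint2} reduces to $-\lambda+w=0$ (as $\partial_u\JJ=0$), which holds by the very definition of $w$; the sign condition \eqref{eq:signcond_q1} matches \eqref{eq:signcond} upon invoking Lemma~\ref{f_dir} for $f'(\Phi(\bar q,\bar u)(t);v)$ together with $\psi(\bar z(t))=\dot{\bar q}(t)$; finally, \eqref{eq:grad_q1} combined with $w=\lambda$ yields \eqref{eq:grad}.

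Having established that the hypotheses of Theorem~\ref{thm:equiv_B_strong} are satisfied in the setting \eqref{eq:assuu1}, that theorem delivers the abstract B-stationarity inequality \eqref{eq:vi}, which in our case reads
\begin{equation*}
\partial_{(q,u)}\JJ(\bar q,\ll)\,\SS'(\ll;\delta\ell)+\partial_\ell\JJ(\bar q,\ll)\,\delta\ell\ge 0\qquad\forall\,\delta\ell\in L^2(0,T;\lo),
\end{equation*}
where $\SS$ denotes the solution operator of \eqref{eq:pde} in the setting \eqref{eq:assuu1}, i.e.\ $\SS(\ell)=(S(\ell),\ell)$. Since $\partial_u\JJ=0$ (the objective $\JJ$ depends only on $q$ and $\ell$), the first term collapses to $\partial_q\JJ(\bar q,\ll)S'(\bar\ell;\delta\ell)$, which gives exactly \eqref{eq:vi_max_q1} and completes the proof.

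The only mildly delicate point in this plan is bookkeeping the correspondence between the abstract control $u$ and the right-hand side $\ell$ (which coincide here due to $\Psi(q,u)=u$) and making sure that the sign condition \eqref{eq:signcond_q1} is literally the specialization of \eqref{eq:signcond} via the formula for $f'$ supplied by Lemma~\ref{f_dir}; everything else is a direct transcription.
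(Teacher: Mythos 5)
Your proposal is correct and follows essentially the same route as the paper: both reduce the claim to Theorem~\ref{thm:equiv_B_strong} by reusing the verification of Assumptions~\ref{assu:stand}--\ref{assu:reg} in the setting \eqref{eq:assuu1} from the proof of Theorem~\ref{thm:ss_qsep1}, and both recover the abstract system \eqref{eq:adjoint1}--\eqref{eq:grad} from \eqref{eq:strongstat_q1} by setting $w=\lambda$ (cf.\ \eqref{eq:adjoint1_qq2}) and invoking Lemma~\ref{f_dir} for the sign condition. Your explicit bookkeeping of the translation is exactly what the paper's shorter proof delegates to the end of the proof of Theorem~\ref{thm:ss_qsep1}.
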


\begin{proof}We show the result by means of Theorem \ref{thm:equiv_B_strong}. 
In the proof of Theorem \ref{thm:ss_qsep1}, we have seen that the problem \eqref{eq:optprob_q1} fits in the setting from section \ref{sec:pres}, i.e., Assumptions \ref{assu:stand} and \ref{assu:s_diff} are satisfied for the quantities in \eqref{eq:assuu1}. According to the end of the  proof of Theorem \ref{thm:ss_qsep1}, the system \eqref{eq:strongstat_q1} coincides with \eqref{eq:strongstat_qq1}, which is the same as \eqref{eq:strongstat}  in this particular setting, see \eqref{eq:addj1}. 
We also note that \eqref{eq:vi} is just \eqref{eq:vi_max_q1}. Hence, the desired statement is true.
Note that, since Assumptions \ref{assu:surj} and \ref{assu:reg} are fulfilled, cf.\ the proof of Theorem \ref{thm:ss_qsep1}, we have the equivalence $\eqref{eq:vi_max_q1} \Longleftrightarrow \eqref{eq:strongstat_q1}$.
\end{proof}

\subsection{Penalization ($L^2-$viscosity)}\label{sec:1}

In this subsection we apply the result from section \ref{sec:pres} to obtain strong stationary optimality conditions for the  control of a \textit{two-field} gradient damage model with fatigue. 
The problem we consider is a penalized version of the viscous fatigue damage model addressed in \cite{alessi}. This kind of penalization has already been proven to be successful  in the context of classical damage models (without fatigue). 
Firstly, it  approximates the single-field damage model, cf.\ \cite{paper2}, and secondly, it is frequently employed in computational mechanics (see e.g.\ \cite{hackl} and the references therein). 

The model we intend to examine describes the evolution of damage under the influence of a time-dependent load $\ell:[0,T] \to H^1(\Omega)^\ast$ (control) acting on a body occupying the bounded Lipschitz domain $\O \subset \R^N$, $N \in \{2,3\}$. The induced 'local' and 'nonlocal' damage are expressed in terms of $\varphi : [0,T] \to H^{1}(\Omega)$ and $d : [0,T] \to L^2(\Omega)$, respectively (states). For more details, see \cite[Sec.\ 1]{hal}.

The viscous two-field gradient damage problem with fatigue reads as follows:
\begin{equation}\label{eq:q}
\left.
  \begin{gathered}
   \varphi(t) \in \argmin_{\varphi \in H^1(\Omega)} \EE(t,  \varphi, d(t)),
    \\ -\partial_d \EE(t,\varphi(t),d(t)) \in \partial_2 \RR_\epsilon (H(d)(t),\dot{d}(t)) \quad \text{in }\lo , \quad d(0) = 0
  \end{gathered}\ \right\}
  \end{equation}a.e.\ in $(0,T)$.    In \eqref{eq:q}, the stored energy $\EE:[0,T] \times H^{1}(\Omega) \times L^2(\Omega)\rightarrow \mathbb{R}$ is given by
\begin{equation*}\label{eq:e}\begin{aligned}
\EE(t, \varphi,d)&:=\frac{\alpha}{2}\|\nabla \varphi\|_{L^2(\O)}^2+\frac{\beta}{2}\| \varphi - d\|_{L^2(\O)}^2-\dual{\ell(t)}{\varphi}_{H^{1}(\Omega)},
\end{aligned}\end{equation*}where $\alpha>0$ is the gradient regularization and $ \beta >0$ denotes the {penalization parameter}. 
   The viscous dissipation  $\RR_\epsilon:L^2(\O) \times  L^2(\O) \rightarrow (-\infty,\infty]$ is defined as \begin{equation*}\label{def:r}
\RR_\epsilon(\zeta,\eta):=\left\{ \begin{aligned}\int_\Omega \kappa(\zeta) \,  \eta \;dx +\frac{\epsilon}{2}\|\eta\|^2_{\lo}, &\quad \text{if }\eta \geq 0 \text{ a.e. in }\Omega,
\\\infty  &\quad \text{otherwise,}\end{aligned} \right.
\end{equation*}
where  $\epsilon>0$ is the viscosity parameter.

\begin{remark}
Optimality conditions for the control of \eqref{eq:q} have been recently established in \cite{hal}, however in a more general setting. Therein, the fatigue degradation function $\kappa$  is assumed to be only directionally differentiable, so that conditions of strong stationary type are not to be expected \cite[Remark 3.22]{hal}. Moreover, in the contribution \cite{hal}, the control space is $H^1(0,T;\lo)$, whereas we will work with $\llz$. Note that there is no need to apply the findings in section \ref{sec:evi} to see that \eqref{eq:q} is a system of the type \eqref{eq:pde}; this is already stated in \cite[Prop.\,2.3]{st_coup}, which is stated below.
\end{remark}

\begin{lemma}{\cite[Propositions 2.3, 2.6]{hal}}[Control-to-state map  and directional differentiability]\label{lem:ode}
 \begin{enumerate} \item\label{it:l1}For every $\ell \in \llU$, the viscous damage problem with fatigue \eqref{eq:q} admits a unique solution $(d,\varphi) \in   \hhyn \times \llu $, which is characterized by  the following PDE system
\begin{subequations}\label{eq:syst_diff}
\begin{gather}
\dot d(t)   =   \frac{1}{\epsilon} \max (-\beta(d(t)-\varphi(t))-(\kappa \circ H)(d)(t), 0)\ \text{ in }\lo, \quad d(0) = 0,    \label{eq:syst_diff11} 
\\- \alpha \Delta \varphi(t) + \beta \,\varphi(t)   
= \beta d(t) +\ell(t) \quad  \text{in }H^{1}(\Omega)^\ast \label{eq:syst_diff12} 
\end{gather}
\end{subequations}a.e.\ in $(0,T)$. 
\item\label{it:l2}
The solution map associated to \eqref{eq:q} $$S: \llU \ni \ell \mapsto (d,\varphi) \in   \hhyn \times \llu$$ is directionally differentiable. 
Its directional derivative $(\dd,\df):=S'(\ell;\dl)$ at $\ell  \in \llU$ in direction $\dl \in \llU$ is the unique solution of
 \begin{equation}\label{eq:ode_lin_q}\begin{aligned}
\dot \dd(t)&=  \frac{1}{\epsilon} \maxlim ' \big(z(t); -\beta(\dd(t)-\df(t))-\F'(d)(\delta d)(t)\big)\ \text{ in }\lo, \ \, \dd(0)=0, \\
&- \alpha \Delta \df(t) + \beta \,\df(t) = \beta \dd(t) +\dl(t) \quad  \text{in }H^{1}(\Omega)^\ast
\end{aligned}
 \end{equation}a.e.\ in $(0,T)$, where we abbreviate $ z(t):=-\beta(d(t)-\varphi(t))-(\kappa \circ H)(d)(t)$.  
 \end{enumerate}      \end{lemma}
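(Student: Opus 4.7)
The statement has two items and I would address both by feeding \eqref{eq:q} through the machinery of section \ref{sec:evi}, the coupled structure being handled by reducing to a single ODE in $d$. For part \ref{it:l1}, the elliptic equation \eqref{eq:syst_diff12} is just the first-order optimality condition of the strictly convex quadratic $\varphi\mapsto \EE(t,\varphi,d(t))$, so $\varphi(t)$ is uniquely determined by $(d(t),\ell(t))$ through the bounded linear resolvent $(-\alpha\Delta+\beta\,\mathbb{I})^{-1}:H^1(\Omega)^\ast\to H^1(\Omega)$. Substituting $\varphi$ back into the differential inclusion places it in the framework of section \ref{sec:evi} with $Y=X=L^2(\Omega)$, $Z=H^1(\Omega)^\ast$, viscosity $\V=\epsilon\,\mathbb{I}$, dissipation $R=\RR$, history $\HH=H$, and driving term $g(d,\ell)=\beta(\varphi-d)$; Assumption \ref{assu:st} is immediate, the only nontrivial item \ref{it:st12} following from Lipschitzianity of $\kappa$. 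Theorem \ref{thm:ode} then gives unique solvability in $H^1_0(0,T;L^2(\Omega))$ and the ODE form $\dot d=\FF(H(d),g(d,\ell))$.

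To pass to \eqref{eq:syst_diff11}, I would compute $\FF$ explicitly via Definition \ref{def:fp}. Since $\RR(\zeta,\cdot)$ is linear on its domain $\{v\geq 0\}$, one has $\partial_2\RR(\zeta,0)=\{\mu\in L^2(\Omega):\mu\leq \kappa(\zeta)\text{ a.e.}\}$; the associated projection (with respect to the scaled inner product $\epsilon^{-1}(\cdot,\cdot)_{L^2}$, which is proportional to the standard one) is the pointwise minimum, so $B(\zeta,\omega)=\min(\omega,\kappa(\zeta))$ and
\[\FF(\zeta,\omega)=\tfrac{1}{\epsilon}\bigl(\omega-\min(\omega,\kappa(\zeta))\bigr)=\tfrac{1}{\epsilon}\max(\omega-\kappa(\zeta),0).\]
Inserting $\zeta=H(d)$ and $\omega=\beta(\varphi-d)$ then yields \eqref{eq:syst_diff11}.

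For part \ref{it:l2}, I would apply Theorem \ref{thm:ode_dir} to the reduced ODE in $d$. By Corollary \ref{cor:fp}, the required hypothesis reduces to directional differentiability of $B$; since $\min(\cdot,\cdot):\mathbb{R}^2\to\mathbb{R}$ is Lipschitz and directionally differentiable, $\kappa$ is $C^1$ by Assumption \ref{assu:standd}.\ref{it:stand2d}, and the resulting Nemytskii operator acts between $L^2$ spaces, Lebesgue dominated convergence yields directional differentiability of $B:L^2(\Omega)\times L^2(\Omega)\to L^2(\Omega)$. Theorem \ref{thm:ode_dir} then gives Hadamard directional differentiability of the reduced solution map and identifies $\dot{\dd}=\FF'(\cdot;\cdot)$, which after evaluating $\FF'=\tfrac{1}{\epsilon}\max'(\cdot;\cdot)$ and using the chain rule $\kappa'(H(d))\,H'(d;\dd)=\F'(d)(\dd)$ recovers the first line of \eqref{eq:ode_lin_q}; the second line is obtained by simply linearizing the linear elliptic equation \eqref{eq:syst_diff12} in $d$ and $\ell$. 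The main obstacle is the bookkeeping involved in composing the $L^2$-valued directional derivatives cleanly, which is handled by the Hadamard character furnished by Theorem \ref{thm:ode_dir} together with the chain rule \cite[Prop.\ 3.6(i)]{shapiro}.
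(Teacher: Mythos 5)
Your argument is correct, but it is worth noting that the paper does not prove this lemma at all: it is imported verbatim from \cite[Propositions 2.3, 2.6]{hal}, and the remark preceding it even states explicitly that ``there is no need to apply the findings in section \ref{sec:evi}'' to see that \eqref{eq:q} has the form \eqref{eq:pde}. What you have done instead is to rederive the result from the paper's own general machinery, in exact parallel to how the paper treats the $H^1_0$-viscosity model in Propositions \ref{lem:ode1} and \ref{lem:ode1_diff}: eliminate $\varphi$ through the resolvent $(-\alpha\Delta+\beta\,\mathbb{I})^{-1}$, verify Assumption \ref{assu:st} with $Y=X=L^2(\Omega)$, $\V=\epsilon\,\mathbb{I}$ and $g(d,\ell)=\beta(\varphi-d)$ (the key point \ref{it:st12} reducing to the Lipschitz estimate for $\kappa$, just as in the proof of Proposition \ref{lem:ode1}), compute $\partial_2\RR(\zeta,0)=\{\mu\le\kappa(\zeta)\}$ so that the projection is pointwise and $\FF(\zeta,\omega)=\tfrac{1}{\epsilon}\max(\omega-\kappa(\zeta),0)$, and then invoke Theorems \ref{thm:ode} and \ref{thm:ode_dir} together with Corollary \ref{cor:fp}. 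All steps check out; the only cosmetic imprecision is calling $\kappa$ of class $C^1$ (Assumption \ref{assu:standd}.\ref{it:stand2d} only gives Lipschitz and differentiable, which together with dominated convergence is what the paper itself uses to get G\^ateaux differentiability of the Nemytskii operator on $L^2$). The trade-off: the paper's citation is economical and leans on \cite{hal}, which was proved there under weaker hypotheses on $\kappa$ (only directional differentiability) and a different control space; your derivation makes the result self-contained and demonstrates that the two-field model is a genuine instance of the general framework of section \ref{sec:evi}, which the paper deliberately chose not to exhibit. You additionally obtain Hadamard (not merely G\^ateaux-type) directional differentiability of the reduced map $\ell\mapsto d$ for free from Theorem \ref{thm:ode_dir}.
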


Next, we want to apply the strong stationarity result from section \ref{sec:pres} to the following
 optimal control problem:

\begin{equation}\tag{Q}\label{eq:optprob_q}
 \left.
 \begin{aligned}
  \min_{\ell \in \llz} \quad & \JJ(d,\varphi,\ell)
  \\
 \text{s.t.} \quad & (d,\varphi) \text{ solves }\eqref{eq:q} \text{ with r.h.s.\ }\ell.
 \end{aligned}
 \quad \right\}
\end{equation}

In the sequel, the objective $\JJ$ is supposed to fulfill 
\begin{assumption}\label{assu:j_ex}
  The objective functional $\JJ: L^2(0,T;\lo)\times \llu \times \llz \to \R$ 
  is continuously Fr\'echet-differentiable and  Lipschitz continuous on bounded sets, i.e.,
for all $M>0$ there exists $L_M>0$ so that 
  \begin{equation*}\label{J_lip}
  |\JJ(v_1)-\JJ(v_2)| \leq L_M\,\|v_1-v_2\|_X  \quad \forall\, v_1,v_2 \in B_X(0,M),
  \end{equation*}where we abbreviate $X:=L^2(0,T;\lo)\times \llu \times \llz$.
\end{assumption}

Note that Assumption \ref{assu:j_ex} is satisfied by classical objectives of tracking type such as
\begin{equation*}
\JJ_{ex}( d ,\varphi, \ell) := \frac{1}{2}\, \|\varphi-\varphi_d\|^2_{L^2(0,T;H^1(\Omega))}
 + \frac{\alpha_1}{2} \|d\|^2_{L^2(0,T;L^2(\Omega))}+\frac{\alpha_2}{2} \|\ell-\ell_d\|^2_{L^2(0,T;\lo)},
\end{equation*}
where $\varphi_d \in L^2(0,T;H^1(\Omega))$, $\ell_d \in L^2(0,T;\lo)$ and $\alpha_1,\alpha_2 > 0$.

Before stating the strong stationary optimality  conditions, we check that Assumption \ref{assu:reg}  is satisfied in our setting. As it will turn out in the proof of Theorem \ref{thm:ss_qsep} below, this is indeed the case, as a result of the following

\begin{lemma}\label{lem:reg}
For any local optimum $\bar \ell$ of \eqref{eq:optprob_q}, there exists $(\lambda,w)\in \llz \times L^2(0,T;\ho)$ so that 
\begin{subequations} \begin{gather}
-\alpha  \Delta w(t)  + \beta (w(t)-\lambda(t) ) =\partial_\varphi \JJ(S(\ll), \ll)(t) \ \ \text{ in } H^{1}(\Omega)^\ast, \label{eq:adj_syst2p}\\
w(t)+\partial_\ell \JJ(S(\ll), \ll)(t)=0\ \ \text{ in } H^{1}(\Omega), \quad \text{a.e.\ in }(0,T),\label{eq:gradgl}
\end{gather}\end{subequations} 
where $S$ is the solution operator associated to \eqref{eq:q}, see Lemma \ref{lem:ode}.\ref{it:l2}.
\end{lemma}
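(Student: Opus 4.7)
Since $\bar\ell$ is locally optimal and $S$ is directionally differentiable by Lemma~\ref{lem:ode}.\ref{it:l2}, the B-stationarity inequality (cf.\ Lemma~2.5 of \cite{st_coup}) applies and yields
\begin{equation}\label{eq:bstat_plan}
(\partial_d \JJ, \dd)_{\llz} + \langle \partial_\varphi \JJ, \df\rangle_{\llu} + (\partial_\ell \JJ, \dl)_{\llz} \geq 0 \qquad \forall\, \dl \in \llz,
\end{equation}
where $(\dd,\df)=S'(\bar\ell;\dl)$ solves \eqref{eq:ode_lin_q} and the partial derivatives of $\JJ$ are evaluated at $(\bar d, \bar\varphi, \bar\ell)$. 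I introduce the notation $z := -\beta(\bar d - \bar\varphi) - \F(\bar d)$ and $A := -\alpha\Delta + \beta\,\mathbb{I}$, which is an isomorphism $H^1(\O) \to H^1(\O)^*$ and self-adjoint with respect to the $L^2(\O)$-pivot.

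The strategy is to extract the pair $(\lambda,w)$ by specialising \eqref{eq:bstat_plan} to directions yielding $\dd\equiv 0$. A case analysis of $\max'(z;\beta\df)$ in the linearised ODE of \eqref{eq:ode_lin_q} shows that $\dd\equiv 0$ occurs for both $+\dl$ and $-\dl$ if and only if $\df$ belongs to the cone $\mathcal{K}:=\{\eta\in\llu:\eta=0 \text{ on } \{z\geq 0\}\}$; the linearised PDE then collapses to $\dl=A\df$. Applying \eqref{eq:bstat_plan} with both signs on this cone converts the inequality into the equation
\begin{equation}\label{eq:equality_plan}
\langle\partial_\varphi \JJ, \df\rangle_{\llu} + (\partial_\ell \JJ, A\df)_{\llz} = 0 \qquad \forall\, \df \in \mathcal{K} \text{ with } A\df\in\llz,
\end{equation}
the latter admissibility being ensured by elliptic regularity for the Neumann problem associated with $A$ whenever $\dl\in\llz$.

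From \eqref{eq:equality_plan} the map $A\df \mapsto (\partial_\ell \JJ, A\df)_{\llz}$ is majorised by $\|\partial_\varphi \JJ\|_{\llu^*}\|\df\|_{\llu} \leq C\|A\df\|_{L^2(0,T;H^1(\O)^*)}$ because $A^{-1}: H^1(\O)^* \to H^1(\O)$ is bounded. A Hahn--Banach extension across the $H^1(\O)^*$-closure of $A(\mathcal{K})$ then represents $\partial_\ell \JJ$ on this cone by an element $-w\in L^2(0,T;H^1(\O))$, which provides \eqref{eq:gradgl}. The multiplier $\lambda$ is then forced by $\beta\lambda := -\alpha\Delta w + \beta w - \partial_\varphi \JJ$: equation \eqref{eq:equality_plan} shows that this distribution annihilates $\mathcal{K}$, so it is concentrated on $\{z\geq 0\}$, and the sign/complementarity structure inherited from $\max'$ promotes it to an $L^2$-density there, giving $\lambda\in\llz$ and \eqref{eq:adj_syst2p}.

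\textbf{Main obstacle.} The delicate step is the passage from \eqref{eq:equality_plan}, which only involves test directions vanishing on the unknown active set $\{z\geq 0\}$, to the \emph{global} $H^1$-regularity of $\partial_\ell\JJ$ and to the $L^2$-integrability of $\lambda$. The Hahn--Banach extension is not canonical and has to be executed in a measurably consistent way in time; alternatively, one may adapt the more general argument of \cite{hal} to the present $L^2$-control setting.
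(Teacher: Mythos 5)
Your route is genuinely different from the paper's: you try to extract $(\lambda,w)$ \emph{directly} from B-stationarity by restricting to directions $\dl=(-\alpha\Delta+\beta\,\mathbb{I})\df$ with $\df$ vanishing on the active set $\{\bar z\ge 0\}$ (so that $\dd\equiv 0$ and the inequality becomes an equality), and then invoke Hahn--Banach. The paper instead proves Lemma \ref{lem:reg} by the classical regularization/penalization technique of \cite{Barbu:1981:NCD,tiba}: one smooths $\max(\cdot,0)$, shows that $\ll$ is approximated by local minimizers of the regularized problems, writes down the exact (smooth) KKT system at each regularization level — which already contains \eqref{eq:adj_syst2p} and \eqref{eq:gradgl} with the correct regularity — establishes uniform bounds on the regularized adjoints following \cite[Prop.\,3.10]{hal}, and passes to the limit. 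That detour is precisely what delivers the \emph{global} identities and the extra regularity that your direct argument cannot reach.

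The gap in your proposal is the one you yourself flag, and it is fatal as written. Your equality only constrains the functional $\dl\mapsto(\partial_\ell\JJ,\dl)_{\llz}$ on the proper subspace $\{A\df:\df\in\mathcal K,\ A\df\in\llz\}$ of $\llz$, where $A:=-\alpha\Delta+\beta\,\mathbb{I}$. A Hahn--Banach extension of the bounded functional $A\df\mapsto-\langle\partial_\varphi\JJ,\df\rangle_{\llu}$ produces \emph{some} $\tilde w\in L^2(0,T;\ho)$ agreeing with $-\partial_\ell\JJ(S(\ll),\ll)$ on that subspace only; since $\mathcal K$ forces vanishing on the (possibly large) set $\{\bar z\ge 0\}$, the subspace is far from dense in $\llz$, so the conclusion $\partial_\ell\JJ(S(\ll),\ll)=-\tilde w$ \emph{as elements of} $L^2(0,T;\ho)$ — i.e.\ \eqref{eq:gradgl} — does not follow. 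The same problem recurs for $\lambda$: defining $\beta\lambda:=-\alpha\Delta w+\beta w-\partial_\varphi\JJ$ yields a priori only an element of $L^2(0,T;\hoo)$ that annihilates $\mathcal K$, and your claim that the ``sign/complementarity structure inherited from $\max'$'' upgrades it to an $L^2(\O)$-density is circular: the sign condition \eqref{eq:signcond_q} is an \emph{output} of Theorem \ref{thm:strongstat}, which needs Assumption \ref{assu:reg} — i.e.\ the present lemma — as an \emph{input}. There is also a smaller technical issue: the test class $\{\df\in\mathcal K:A\df\in\llz\}$ requires $H^2$-type regularity with a Neumann condition \emph{and} vanishing on $\{\bar z\ge0\}$ simultaneously, and you give no argument that this class is rich enough for the duality step. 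To close the argument you would have to either prove density of $A(\mathcal K)$ in a suitable space (false in general) or switch to the regularization strategy the paper uses.
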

\begin{proof}
We refer to the proof of \cite[Lem.\,4.3]{st_coup}, where a very similar setting is considered. The result is shown by a classical regularization approach, see \cite{tiba, Barbu:1981:NCD} for instance. One defines a smooth approximation of the function ${\max(\cdot, 0)}$,  to which one associates a state equation where the solution mapping is G\^{a}teaux-differentiable. Then, it is shown that $\ll$ can be approximated by a sequence of local minimizers of an optimal control problem governed by the regularized state equation. Passing to the limit in the adjoint system associated to the regularized optimal control problem finally yields the desired assertion. The only difference to the proof of \cite[Lem.\,4.3]{st_coup} consists in having to show the existence and uniformly boundedness of the regularized adjoint state. To this end, one can follow  arguments employed in the proof of \cite[Prop.\,3.10]{hal}.
\end{proof}

\begin{remark}\label{rem:lambda}
Note that the existence of a tuple $(\xi,\lambda,w)\in H^1_T(0,T;\hoo) \times L^2(0,T;\hoo) \times L^2(0,T;\ho)$ satisfying the adjoint system \eqref{eq:adjoint1_q}-\eqref{eq:adjoint2_q}-\eqref{eq:grad_q} below  follows directly, without employing Lemma \ref{lem:reg}. It is the $\llz$-regularity of the multiplier $\lambda$ which cannot be immediately deduced from \eqref{eq:adjoint2_q}. The fact that this type of additional information may follow from a regularization approach is not new, see   \cite{paper, st_coup} for similar situations. Let us underline that the improved space-regularity of $\lambda$ is essential for deriving the sign condition in \eqref{eq:signcond_q} a.e.\ in $(0,T) \times \O$, see the proof of Theorem \ref{thm:ss_qsep} below.
\end{remark}

The main result of this subsection reads as follows.
\begin{theorem}[Strong stationarity for the optimal control of the viscous two-field  damage model with fatigue]\label{thm:ss_qsep}
 Let $\bar \ell \in \llz$ be locally optimal for \eqref{eq:optprob_q} with associated states  \begin{equation*}
  \bar d \in \hhyn \quad \text{and} \quad \bar \varphi \in \llu. 
 \end{equation*}   Then, there exist unique adjoint states
 \begin{equation*}
  \xi \in H^{1}_T(0,T;\lo) \quad \text{and} \quad w \in \llu,
 \end{equation*}  
 and a unique multiplier $\lambda \in L^{2}(0,T;\lo)$ such that the following system is satisfied 
 \vspace{-0.5cm}
 \begin{subequations}\label{eq:strongstat_q}
 \begin{gather}
  -\dot \xi- \beta (w-\lambda)+[\F'(\bar d)]^\ast \lambda= \partial_{d} \JJ(\bar d,\bar \varphi, \ll) \  \text{ in  }L^{2}(0,T;\lo), \quad \xi(T) = 0,\label{eq:adjoint1_q} \\[1mm]
-\alpha  \Delta w + \beta (w-\lambda)=\partial_\varphi \JJ(\bar d,\bar \varphi, \ll) \ \ \text{ in } L^2(0,T;H^{1}(\Omega)^\ast), \label{eq:adjoint2_q}\\[1mm]
 \left. \begin{aligned} 
  \lambda(t,x)&=\frac{1}{\epsilon} \raisebox{3pt}{$\chi$}_{\{ \bar  z>0\}}(t,x) \xi(t,x) \quad \text{a.e.\ where  }\bar z(t,x) \neq 0,\\
  \lambda(t,x) &\in \big[0,\frac{1}{\epsilon} \xi(t,x) \big]  \quad \text{a.e.\ where  }\bar z(t,x) = 0,\end{aligned}\right\}\label{eq:signcond_q}
\\[1mm] w+\partial_\ell \JJ(\bar d,\bar \varphi, \ll)=0\ \ \text{ in } L^2(0,T;H^{1}(\Omega)),\label{eq:grad_q}
 \end{gather} 
 \end{subequations}    where we abbreviate $\bar z:=-\beta(\bar d -\bar \varphi)-\F(\bar d )$. 
\end{theorem}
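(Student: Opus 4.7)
The plan is to cast \eqref{eq:optprob_q} into the abstract template of Section~\ref{sec:pres} and apply Theorem~\ref{thm:strongstat}. Concretely, I would set
\begin{equation*}
V := \lo,\quad Y := \lo,\quad U := \ho,\quad J := \JJ,
\end{equation*}
together with the Nemytskii operator $f:\lo\to\lo$, $f(\omega) := \tfrac{1}{\epsilon}\max(\omega,0)$, and the mappings
\begin{equation*}
\Phi(d,\varphi) := -\beta(d-\varphi)-\F(d),\qquad \Psi(d,\varphi) := -\alpha\Delta\varphi + \beta(\varphi-d).
\end{equation*}
Under these choices the PDE system \eqref{eq:syst_diff} coincides with the abstract state equation \eqref{eq:pde}. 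Assumption~\ref{assu:stand} is then routine: $\lo \dense \hoo$ gives $V \dense U^\ast$; $\Psi$ is linear and $\Phi$ is G\^ateaux-differentiable by Assumption~\ref{assu:standd}; the Nemytskii map $f$ is globally $\tfrac{1}{\epsilon}$-Lipschitz and directionally differentiable with pointwise derivative $f'(\omega;v) = \tfrac{1}{\epsilon}\max'(\omega;v)$; and $\JJ$ is Fr\'echet-differentiable by Assumption~\ref{assu:j_ex}. Assumption~\ref{assu:s_diff} is essentially a rephrasing of Lemma~\ref{lem:ode}, the only non-trivial piece being the estimate in Assumption~\ref{assu:s_diff}.\ref{it:s3}, which follows from a standard energy estimate for \eqref{eq:ode_lin_q} combined with the continuous embedding $\lo \embed \hoo$.

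Turning to the qualification conditions: since $\partial_\varphi\Phi(\bar d,\bar\varphi)=\beta\,\mathbb{I}:\ho\to\lo$ and $\ho \dense \lo$, one gets $\rg(\partial_u\Phi(\bar d,\bar\varphi)) \dense \lY$, which is Assumption~\ref{assu:surj}. For Assumption~\ref{assu:reg}, a brief computation yields
\begin{equation*}
\partial_\varphi\Phi^\ast = \beta\,\mathbb{I}:\lo\to\hoo,\qquad \partial_\varphi\Psi^\ast = -\alpha\Delta+\beta\,\mathbb{I}:\ho\to\hoo,
\end{equation*}
so that upon setting $w := -\partial_\ell\JJ(\bar d,\bar\varphi,\ll)$ and rearranging, the identity demanded in Assumption~\ref{assu:reg} reduces to the system \eqref{eq:adj_syst2p}--\eqref{eq:gradgl}, which admits a solution $(\lambda,w)\in\llz\times L^2(0,T;\ho)$ by Lemma~\ref{lem:reg}. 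It is precisely the $\llz$-regularity of $\lambda$ delivered by Lemma~\ref{lem:reg} (cf.\ Remark~\ref{rem:lambda}) that ensures $\lambda\in L^2(0,T;Y)=\llz$ as required by Assumption~\ref{assu:reg}.

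With these preparations, Theorem~\ref{thm:strongstat} produces unique adjoint states $\xi\in H^{1}_T(0,T;\lo)$, $w\in\llu$ and a unique multiplier $\lambda\in\llz$. Substituting
\begin{equation*}
\partial_d\Phi^\ast = -\beta\,\mathbb{I}-[\F'(\bar d)]^\ast,\qquad \partial_d\Psi^\ast = -\beta\,\mathbb{I}
\end{equation*}
into \eqref{eq:adjoint1}, \eqref{eq:adjoint2} and \eqref{eq:grad} and simplifying reproduces \eqref{eq:adjoint1_q}, \eqref{eq:adjoint2_q} and \eqref{eq:grad_q} respectively.

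The step I expect to require most care is the translation of the abstract sign condition \eqref{eq:signcond} into the pointwise relations \eqref{eq:signcond_q}. With the present identifications, \eqref{eq:signcond} reads
\begin{equation*}
\tfrac{1}{\epsilon}\int_\O \xi(t,x)\,\max'(\bar z(t,x);v(x))\,dx \;\geq\; \int_\O \lambda(t,x)\,v(x)\,dx \quad\forall\,v\in\lo, \text{ a.e.\ in } (0,T).
\end{equation*}
Because $\lambda$ lies in $\llz$, I would localise: testing with $v$ supported in each of $\{\bar z(t,\cdot)>0\}$, $\{\bar z(t,\cdot)<0\}$ and $\{\bar z(t,\cdot)=0\}$ (on the last set with both signs), and using $\max'(s;v)=v$ for $s>0$, $\max'(s;v)=0$ for $s<0$ and $\max'(0;v)=\max(v,0)$, one reads off the three alternatives $\lambda = \tfrac{1}{\epsilon}\xi$, $\lambda = 0$ and $\lambda\in[0,\tfrac{1}{\epsilon}\xi]$ that together constitute \eqref{eq:signcond_q}. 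This pointwise reduction is the only place where the $\llz$-regularity of $\lambda$ is genuinely essential; without it, the inequality would hold only in a distributional sense against test functions in $\hoo$ and the characterisation on the set $\{\bar z=0\}$ would be lost.
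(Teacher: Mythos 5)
Your proposal is correct and follows essentially the same route as the paper's proof: the same identifications $V=U^\ast$-dual pair, $f$, $\Phi$, $\Psi$, verification of Assumptions \ref{assu:stand}, \ref{assu:s_diff}, \ref{assu:surj} and \ref{assu:reg} (the last via Lemma \ref{lem:reg}), application of Theorem \ref{thm:strongstat}, and a localisation of the abstract sign condition using the pointwise form of $\max'$ to obtain \eqref{eq:signcond_q}. The only cosmetic difference is that the paper invokes the Lipschitz continuity of $S$ from \cite[Lemma 2.4]{hal} for Assumption \ref{assu:s_diff}.\ref{it:s3} rather than a direct energy estimate, which changes nothing of substance.
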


\begin{proof}We aim to apply the strong stationarity result given by Theorem \ref{thm:strongstat} for the optimal control problem \eqref{eq:optprob_q}. To this end, we have to check if \eqref{eq:optprob_q} fits in the general setting from section \ref{sec:pres}. After that, we  verify Assumptions \ref{assu:s_diff}, \ref{assu:surj} and \ref{assu:reg}. 
Indeed, with the notations from section \ref{sec:pres}, we see that if we set
       \begin{subequations}\label{eq:assuu}\begin{gather}
     V:=L^{2}(\Omega), \   Y:=\lo,\  U:=H^{1}(\Omega), \  J:=\JJ, \\
                       f: Y^\ast \to Y, \quad f( \omega)= \frac{1}{\epsilon}  \max(\omega , {0}), \label{eq:as_f} \\
            \Phi:L^2(0,T; Y \times U) \ni (d,\varphi) \mapsto -\beta( d - \varphi)-\F(d) \in L^2(0,T;Y^\ast), \label{eq:as_phi}  \\ 
            \Psi : L^2(0,T; Y \times U) \ni(d,\varphi)  \mapsto -\alpha  \Delta \varphi + \beta \varphi - \beta d \in L^2(0,T;  U^\ast ) \label{eq:as_psi} , 
 \end{gather}
 \end{subequations}then \eqref{eq:optprob} coincides with \eqref{eq:optprob_q}, thanks to Lemma \ref{lem:ode}.\ref{it:l1}. Notice that $V \dense U^\ast $ so that Assumption \ref{assu:stand}.\ref{it:stand1} is satisfied. Since $\F:\llz \to \llz$ is G\^ateaux-differentiable, see\,Assumption \ref{assu:standd}, the requirement in  Assumption \ref{assu:stand}.\ref{it:stand3} is fulfilled as well. Since $\max(\cdot,0):\lo \to \lo$ is Lipschitz continuous and directionally differentiable, Assumption \ref{assu:stand}.\ref{it:stand2} is satisfied by \eqref{eq:as_f}. Thus, the entire Assumption \ref{assu:stand} holds true in the setting \eqref{eq:assuu}, cf.\ also Assumption \ref{assu:j_ex}.

 Moreover, by employing  Lemma \ref{lem:ode}.\ref{it:l1}, we see that Assumption \ref{assu:s_diff}.\ref{it:s1} holds. The resulting solution operator of \eqref{eq:pde}, i.e., $$S: \llU \ni \ell \mapsto (d,\varphi) \in   \hhyn \times \llu$$ is directionally differentiable, cf.\ Lemma  \ref{lem:ode}.\ref{it:l2}. According to the latter, its directional derivative $S'(\ell;\dl)$ at $\ell  $ in direction $\dl $ is the unique solution of \eqref{eq:ode_lin_q}, and thus, of 
 \eqref{eq:pde_lin}, whence Assumption \ref{assu:s_diff}.\ref{it:s2} follows. 
According to \cite[Lemma 2.4]{hal}, $S: \llU \to   \hhyn \times \llu$ is Lipschitz continuous, which implies that Assumption \ref{assu:s_diff}.\ref{it:s3} is verified as well. Hence, the entire Assumption \ref{assu:s_diff} is true for the setting \eqref{eq:assuu}.
 

It remains to check that Assumptions \ref{assu:surj} and \ref{assu:reg} are guaranteed. To this end, we observe that 
  \begin{equation*}\label{eq:der_phi}
\partial_{\varphi}  \Phi (\bar d,\bar \varphi)=\beta \,\mathbb{I}:  \llu \to L^2(0,T;\lo).
\end{equation*}
As a result of $\llu \dense L^2(0,T;\lo)$, the 'constraint qualification' in Assumption \ref{assu:surj} is fulfilled. In the light of  \eqref{eq:as_phi}-\eqref{eq:as_psi}, the adjoints of the partial derivatives of $\Phi$ and $\Psi$ are given by  
\begin{equation}\label{eq:addj}
\begin{aligned}
 \partial_{d}  \Phi (\bar d,\bar \varphi)^\ast &= -\beta \,\mathbb{I}-[\F'(\bar d)]^\ast : L^2(0,T;\lo)   \to   L^2(0,T;\lo),\\
\partial_{\varphi}  \Phi (\bar d,\bar \varphi)^\ast &= \beta \,\mathbb{I}: L^2(0,T;\lo)  \to   L^2(0,T;\hoo),\\
\partial_{d}  \Psi (\bar d,\bar \varphi)^\ast &= -\beta \,\mathbb{I}: \llu   \to   L^2(0,T;\lo),\\
\partial_{\varphi}  \Psi (\bar d,\bar \varphi)^\ast &= -\alpha \Delta + \beta \,\mathbb{I}: \llu   \to L^2(0,T;\hoo).
\end{aligned}
\end{equation}
Now Lemma \ref{lem:reg} gives in turn that Assumption \ref{assu:reg} is true for the setting \eqref{eq:assuu}.
Thus, we can apply Theorem \ref{thm:strongstat}, which  in combination with \eqref{eq:addj} tells us that there exist unique adjoint states $\xi \in H^1_T(0,T;\lo)$ and $w \in \llu$ and a unique multiplier $\lambda \in L^2(0,T;\lo)$ such that 
 \begin{subequations}\label{eq:strongstat_qq}
 \begin{gather}
-\dot \xi+\beta \lambda+[\F'(\bar d)]^\ast \lambda- \beta w= \partial_{d} \JJ(\bar d,\bar \varphi, \ll) \  \text{ in  }L^{2}(0,T;\lo), \quad \xi(T) = 0,\label{eq:adjoint1_qq} \\[1mm]
  -\beta \lambda-\alpha  \Delta w + \beta w=\partial_\varphi \JJ(\bar d,\bar \varphi, \ll) \ \ \text{ in } L^2(0,T;H^{1}(\Omega)^\ast), \label{eq:adjoint2_qq}\\[1mm]
\big(\xi(t), f' (\Phi (\bar d , \bar \varphi)(t) ; v)\big)_{\lo}  \geq  (\lambda(t),v)_{\lo}   \quad \forall\, v\in \lo, \;\text{a.e.\ in  }  \, (0,T), \label{eq:signcond_qq} \\[1mm]
w+\partial_\ell \JJ(\bar d,\bar \varphi, \ll)=0\ \ \text{ in } L^2(0,T;H^{1}(\Omega)). \label{eq:grad_qq}
 \end{gather} 
 \end{subequations}  
It remains to show that \eqref{eq:signcond_qq} implies \eqref{eq:signcond_q}. Here, we recall the abbreviation $\bar z:=-\beta(\bar d-\bar \varphi)-\F(\bar d)$ and \eqref{eq:as_phi}, i.e., $\bar z=\Phi (\bar  d,\bar \varphi)$. An argument based on the fundamental lemma of calculus of variations and the positive homogeneity of the directional derivative w.r.t.\ direction yields
    \begin{equation*}\label{eq:d1}
\frac{1}{\epsilon} \xi(t,x) \maxlim ' (\bar z(t,x);1)  \geq  \lambda(t,x) \geq -\frac{1}{\epsilon} \xi(t,x) \maxlim ' (\bar z(t,x);-1)
  \quad  \;\text{a.e.\ in  }  \, (0,T) \times \O,
   \end{equation*} in view of \eqref{eq:as_f}.
The desired assertion now follows by distinguishing between the sets $\{(t,x):\bar z (t,x) >0\}$, $\{(t,x):\bar z (t,x) <0\}$ and $\{(t,x):\bar z (t,x) =0\}$.
\end{proof}

\begin{remark}
 If $\bar z(t,x) \neq  0$ \ae $(0,T) \times \O$, then  \eqref{eq:strongstat_q} reduces to the standard KKT-conditions, since in this case, \eqref{eq:signcond_q} is equivalent to 
 \[\lambda=\frac{1}{\epsilon}\maxlim '(\bar z)\xi \quad \ae (0,T) \times \O.\]
\end{remark}

The optimality system in Theorem \ref{thm:ss_qsep} is indeed of strong stationary type, as the next result shows:

\begin{theorem}[Equivalence between B- and strong stationarity]\label{thm:B_qsep} Assume that $\bar \ell \in L^2(0,T;L^2(\O))$ together with its states $(\bar d, \bar \varphi) \in \hhyn \times \llu$, some 
 adjoint states $(\xi,w) \in H^{1}_T(0,T;\lo) \times \llu$, and a multiplier $\lambda \in L^{2}(0,T;\lo)$ 
 satisfy the optimality system \eqref{eq:adjoint1_q}--\eqref{eq:grad_q}.
 Then, it also satisfies the variational inequality 
  \begin{equation}\label{eq:vi_max_q}
  \partial_{( d,  \varphi)} \JJ(\bar d, \bar \varphi,\ll) S'(\bar \ell; \delta \ell) + \partial_{\ell} \JJ(\bar d, \bar \varphi,\ll) \delta \ell \geq 0  \quad \forall\, \delta \ell \in L^2(0,T;L^2(\O)),
 \end{equation}where $S: \llU  \to  \hhyn \times \llu$ is the solution mapping associated to \eqref{eq:q}, see Lemma \ref{lem:ode}.
 \end{theorem}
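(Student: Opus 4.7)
My plan is to mirror the strategy used in the proof of Theorem~\ref{thm:B_qsep1} and apply the abstract equivalence result Theorem~\ref{thm:equiv_B_strong} in the concrete setting \eqref{eq:assuu}. In the proof of Theorem~\ref{thm:ss_qsep} it was already verified that the two-field optimal control problem \eqref{eq:optprob_q} fits the framework of section~\ref{sec:pres} with the identifications in \eqref{eq:assuu}, and that Assumptions~\ref{assu:stand}, \ref{assu:s_diff}, \ref{assu:surj} and \ref{assu:reg} all hold. Moreover, inspecting the adjoint formulas in \eqref{eq:addj}, the equations \eqref{eq:adjoint1_q}, \eqref{eq:adjoint2_q} and \eqref{eq:grad_q} are nothing but \eqref{eq:adjoint1_qq}, \eqref{eq:adjoint2_qq} and \eqref{eq:grad_qq} rewritten, which in turn are the abstract conditions \eqref{eq:adjoint1}, \eqref{eq:adjoint2} and \eqref{eq:grad}. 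Hence the only nontrivial task is to show that the pointwise sign condition \eqref{eq:signcond_q} implies the abstract sign condition \eqref{eq:signcond_qq}, i.e.\ the abstract \eqref{eq:signcond} for the choice \eqref{eq:as_f}. Once this implication is established, Theorem~\ref{thm:equiv_B_strong} yields \eqref{eq:vi}, which, under the identifications \eqref{eq:assuu}, is exactly \eqref{eq:vi_max_q}.

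For the sign-condition translation I recall that $f(\omega)=\frac{1}{\epsilon}\max(\omega,0)$, so that $f'(\bar z(t,x);v) = \frac{1}{\epsilon}\max'(\bar z(t,x);v)$ pointwise, with $\max'(s;v) = v$ if $s>0$, $\max'(s;v)=0$ if $s<0$, and $\max'(0;v) = \max(v,0)$. What I need to check is that for every $v \in \lo$,
\begin{equation*}
\frac{1}{\epsilon}\int_\Omega \xi(t,x)\max'(\bar z(t,x);v(x))\,dx \;\geq\; \int_\Omega \lambda(t,x)v(x)\,dx
\quad \text{a.e.\ in }(0,T).
\end{equation*}
I would argue pointwise by splitting $\Omega$ into $\{\bar z>0\}$, $\{\bar z<0\}$ and $\{\bar z=0\}$. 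On $\{\bar z>0\}$ both sides equal $\frac{1}{\epsilon}\xi v$ by \eqref{eq:signcond_q}; on $\{\bar z<0\}$ the left-hand side vanishes and $\lambda=0$ by \eqref{eq:signcond_q}; on $\{\bar z=0\}$ the inequality reduces to $\frac{1}{\epsilon}\xi(t,x)\max(v(x),0) \geq \lambda(t,x)v(x)$, which is verified by distinguishing the sign of $v(x)$ and using $0 \leq \lambda \leq \frac{1}{\epsilon}\xi$ there. Integrating over $\Omega$ then yields \eqref{eq:signcond_qq}, which combined with \eqref{eq:adjoint1_qq}--\eqref{eq:grad_qq} is the abstract system \eqref{eq:strongstat}; Theorem~\ref{thm:equiv_B_strong} delivers \eqref{eq:vi_max_q}.

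The main (minor) obstacle I anticipate is the careful case analysis on $\{\bar z=0\}$, because this is the set where $\max'(\cdot;\cdot)$ is not linear in direction and where the pointwise bound $0 \leq \lambda \leq \frac{1}{\epsilon}\xi$ from \eqref{eq:signcond_q} must be invoked; in particular, this implicitly uses $\xi \geq 0$ a.e.\ on $\{\bar z=0\}$, which is an information already encoded in \eqref{eq:signcond_q}. Everything else is a direct transcription of the abstract equivalence through the identifications \eqref{eq:assuu} and the adjoint computation \eqref{eq:addj}, exactly as in the proof of Theorem~\ref{thm:B_qsep1}.
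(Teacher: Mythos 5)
Your proposal is correct and follows essentially the same route as the paper: reduce everything to Theorem~\ref{thm:equiv_B_strong} via the identifications \eqref{eq:assuu} and \eqref{eq:addj}, and then verify that \eqref{eq:signcond_q} implies \eqref{eq:signcond_qq} by the pointwise case distinction on $\{\bar z>0\}$, $\{\bar z<0\}$ and $\{\bar z=0\}$ (with the sign of $v$ split on the latter set, exactly the paper's $M^+$/$M^-$ decomposition). Your remark that $\xi\ge 0$ a.e.\ on $\{\bar z=0\}$ is implicitly encoded in the interval condition is a correct and worthwhile observation.
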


\begin{proof}We show the result by means of Theorem \ref{thm:equiv_B_strong}. 
In the proof of Theorem \ref{thm:ss_qsep}, we have seen that the problem \eqref{eq:optprob_q} fits in the setting from Section \ref{sec:pres}, i.e., Assumptions \ref{assu:stand} and \ref{assu:s_diff} are satisfied for the quantities in \eqref{eq:assuu}. According to the proof of Theorem \ref{thm:ss_qsep}, the system \eqref{eq:strongstat} coincides with \eqref{eq:strongstat_qq} in this particular setting, see \eqref{eq:addj}. 
We also note that \eqref{eq:vi} is just \eqref{eq:vi_max_q}. 
Thus, in view of Theorem \ref{thm:equiv_B_strong}, we only need to show that \eqref{eq:signcond_q} implies \eqref{eq:signcond_qq}, which, in view of \eqref{eq:as_f} and \eqref{eq:as_phi}, reads
\begin{equation}\label{eq:to_show}
\big(\xi(t),\frac{1}{\epsilon} \maxlim ' (\bar z(t); v)\big)_{\lo}  \geq  (\lambda(t),v)_{\lo}   \quad \forall\, v\in \lo, \;\text{a.e.\ in  }  \, (0,T),  \end{equation}where $\bar z:=-\beta(\bar d -\bar \varphi)-\F(\bar d)$. 

  To this end, let $v\in \lo$ be arbitrary, but fixed. 
  From the first identity in \eqref{eq:signcond_q}, we know that  \begin{equation} \label{eq:da} \begin{aligned}
\lambda(t,x)v(x) &=\frac{1}{\epsilon} \raisebox{3pt}{$\chi$}_{\{ \bar  z>0\}}(t,x)v(x) \xi(t,x)
\\&=\frac{1}{\epsilon} \maxlim '(\bar z(t,x))v(x)\xi(t,x) \quad \text{a.e.\ where } \bar z(t,x) \neq 0.
\end{aligned} \end{equation}

Further, we define $M^+:=\{(t,x) \in (0,T) \times  \O: \bar z(t,x) = 0 \text{ and } v(x)>0 \}$ and $M^-:=\{(t,x) \in (0,T) \times  \O: \bar z(t,x) = 0 \text{ and }  v(x) \leq 0 \}$ (up to sets of measure zero).   Then, the second identity in \eqref{eq:signcond_q} yields  
\begin{equation}  \label{eq:db} \begin{aligned}
 \lambda(t,x)v(x)  &\leq \left \{
 \begin{aligned}
 &\frac{1}{\epsilon}  \xi(t,x)v(x)
&& \ \ \text{ a.e.\ in  }M^+
\\ &0 && \ \  \text{ a.e.\ in  }M^-
\end{aligned} \right.
\\&\quad =\frac{1}{\epsilon} \maxlim '(\bar z(t,x);v(x))\xi(t,x) \quad \text{a.e.\ where } \bar z(t,x) = 0.
\end{aligned}
\end{equation}  Now, \eqref{eq:to_show} follows from \eqref{eq:da} and \eqref{eq:db}.
Note that, since Assumptions \ref{assu:surj} and \ref{assu:reg} are fulfilled, cf.\ the proof of Theorem \ref{thm:ss_qsep}, we have the equivalence $\eqref{eq:vi_max_q} \Longleftrightarrow \eqref{eq:strongstat_q}$.
\end{proof}

\section*{Acknowledgment}This work was supported by the DFG grant BE 7178/3-1 for the project "Optimal Control of Viscous
Fatigue Damage Models for Brittle Materials: Optimality Systems".

\bibliographystyle{plain}
\bibliography{hist_dep_EVI}

\begin{thebibliography}{10}

\bibitem{alessi}
R.~Alessi, V.~Crismale, and G.~Orlando.
\newblock Fatigue effects in elastic materials with variational damage models:
  A vanishing viscosity approach.
\newblock {\em Journal of Nonlinear Science}, 29:1041--1094, 2019.

\bibitem{frac}
H.~Antil, L.~Betz, and D.~Wachsmuth.
\newblock Strong stationarity for optimal control problems with non-smooth
  integral equation constraints: Application to continuous {DNN}s.
\newblock https://arxiv.org/abs/2302.05318, 2023.

\bibitem{mig_bai}
Y.~Bai and S.~Mig\'{o}rski.
\newblock Well-posedness of history-dependent evolution inclusions with
  applications.
\newblock {\em Z. Angew. Math. Phys.}, 70(114), 2019.

\bibitem{Barbu:1981:NCD}
V.~Barbu.
\newblock Necessary conditions for distributed control problems governed by
  parabolic variational inequalities.
\newblock {\em SIAM Journal on Control and Optimization}, 19(1):64--86, 1981.

\bibitem{barbu84}
V.~Barbu.
\newblock {\em Optimal control of variational inequalities}.
\newblock Pitman (Advanced Publishing Program), Boston, 1984.

\bibitem{barbu_tiba}
V.~Barbu and D.~Tiba.
\newblock Optimal control of abstract variational inequalities.
\newblock In {\em Control for distributed parameter systems}. Pergamon Press,
  Oxford, 1990.
\newblock Eds.\ M. Amouroux and A. EI Jai.

\bibitem{st_coup}
L.~Betz.
\newblock Strong stationarity for optimal control of a non-smooth coupled
  system: Application to a viscous evolutionary {VI} coupled with an elliptic
  {PDE}.
\newblock {\em SIAM J.\ on Optimization}, 29(4):3069--3099, 2019.

\bibitem{mcrf}
L.~Betz.
\newblock Strong stationarity for a highly nonsmooth optimization problem with
  control constraints.
\newblock {\em Mathematical Control and Related Fields,
  doi={10.3934/mcrf.2022047}}, 2022.

\bibitem{hal}
L.~Betz.
\newblock Optimal control of a viscous two-field damage model with fatigue.
\newblock https://arxiv.org/abs/2302.05627, 2023.

\bibitem{bs}
J.F. Bonnans and A.~Shapiro.
\newblock {\em Pertubation Analysis of Optimization Problems}.
\newblock Springer, New York, 2000.

\bibitem{cc}
C.~Christof.
\newblock Sensitivity analysis and optimal control of obstacle-type evolution
  variational inequalities.
\newblock {\em SIAM J. Control Optim.}, 57(1):192--218, 2019.

\bibitem{brok_ch}
C.~Christof and M.~Brokate.
\newblock Strong stationarity conditions for optimal control problems governed
  by a rate-independent evolution variational inequality.
\newblock arXiv:2205.01196, 2022.

\bibitem{qevi}
C.~Christof and G.~Wachsmuth.
\newblock Lipschitz stability and {H}adamard directional differentiability for
  elliptic and parabolic obstacle-type quasi-variational inequalities.
\newblock {\em SIAM Journal on Control and Optimization}, 60(6):3430--3456,
  2022.

\bibitem{DelosReyes-Meyer}
J.~C. De~los Reyes and C.~Meyer.
\newblock Strong stationarity conditions for a class of optimization problems
  governed by variational inequalities of the second kind.
\newblock {\em Journal of Optimization Theory and Applications},
  168(2):375--409, 2016.

\bibitem{hackl}
B.J. Dimitrijevic and K.~Hackl.
\newblock A method for gradient enhancement of continuum damage models.
\newblock {\em Technische Mechanik}, 28(1):43–52, 2008.

\bibitem{FK06}
M.~Fr\'emond and N.~Kenmochi.
\newblock Damage problems for viscous locking materials.
\newblock {\em Adv. Math. Sci. Appl.}, 16(2):697–716, 2006.

\bibitem{fn96}
M.~Fr\'emond and B.~Nedjar.
\newblock Damage, gradient of damage and principle of virtual power.
\newblock {\em Int. J. Solids Struct.}, 33(8):1083–1103, 1996.

\bibitem{HintermuellerKopacka2008:1}
M.~Hinterm{\"u}ller and I.~Kopacka.
\newblock Mathematical programs with complementarity constraints in function
  space: {C}- and strong stationarity and a path-following algorithm.
\newblock {\em SIAM Journal on Optimization}, 20(2):868--902, 2009.

\bibitem{ItoKunisch2008:1}
K.~Ito and K.~Kunisch.
\newblock Optimal control of parabolic variational inequalities.
\newblock {\em Journal de Mathématiques Pures et Appliqués}, 93(4):329--360,
  2010.

\bibitem{conical}
J.~Jaru\v{s}ek, M.~Krbec, M.~Rao, and J.~Soko{\l}owski.
\newblock Conical differentiability for evolution variational inequalities.
\newblock {\em Journal of Differential Equations}, 193(1):131--146, 2003.

\bibitem{paper}
C.~Meyer and L.M. Susu.
\newblock Optimal control of nonsmooth, semilinear parabolic equations.
\newblock {\em SIAM Journal on Control and Optimization}, 55(4):2206--2234,
  2017.

\bibitem{paper2}
C.~Meyer and L.M. Susu.
\newblock Analysis of a viscous two-field gradient damage model. {P}art {II}:
  Penalization limit.
\newblock {\em Z.\ Anal.\ Anwend.}, 38(4), 2019.

\bibitem{mielke}
A.~Mielke.
\newblock Differential, energetic, and metric formulations for rate-independent
  processes.
\newblock In {\em Nonlinear PDE's and Applications}, volume 2028 of {\em Lect.\
  Notes Math.}, pages 87--167. Springer, Berlin, 2011.
\newblock C.I.M.E. Summer School, Cetraro, Italy 2008.

\bibitem{mp76}
F~Mignot.
\newblock Contr{\^{o}}le dans les in{\'{e}}quations variationelles elliptiques.
\newblock {\em Journal of Functional Analysis}, 22(2):130--185, 1976.

\bibitem{mp84}
F.~Mignot and J.-P. Puel.
\newblock Optimal control in some variational inequalities.
\newblock {\em SIAM Journal on Control and Optimization}, 22(3):466--476, 1984.

\bibitem{oc_h_d}
S.~Mig\'{o}rski.
\newblock Optimal control of history-dependent evolution inclusions with
  applications to frictional contact.
\newblock {\em Journal of Optimization Theory and Applications}, 185:574--596,
  2020.

\bibitem{mig2021}
S.~Mig\'{o}rski.
\newblock A class of history-dependent systems of evolution inclusions with
  applications.
\newblock {\em Nonlinear Analysis: Real World Applications}, 59, 2021.

\bibitem{mig_sof}
S.~Mig\'{o}rski, M.~Sofonea, and S.~Zeng.
\newblock Well-posedness of history-dependent sweeping processes.
\newblock {\em SIAM J. Math. Anal.}, 51(2):1082--1107, 2019.

\bibitem{rockaf}
R.T. Rockafellar.
\newblock {\em Convex Analysis}.
\newblock Princeton University Press, Princeton, 1970.

\bibitem{ScheelScholtes2000}
H.~Scheel and S.~Scholtes.
\newblock Mathematical programs with complementarity constraints: Stationarity,
  optimality, and sensitivity.
\newblock {\em Mathematics of Operations Research}, 25(1):1--22, 2000.

\bibitem{schirotzek}
W.~Schirotzek.
\newblock {\em Nonsmooth Analysis}.
\newblock Springer, Berlin, 2007.

\bibitem{shapiro}
A.~Shapiro.
\newblock On concepts of directional differentiability.
\newblock {\em Journal of Optimization Theory and Applications},
  66(3):477--487, 1990.

\bibitem{sofonea}
M.~Sofonea and M.~Andaluzia.
\newblock {\em Variational Inequalities with Applications}.
\newblock Springer, New York, 2009.

\bibitem{sww}
U.~Stefanelli, G.~Wachsmuth, and D.~Wachsmuth.
\newblock Optimal control of a rate-independent evolution equation via viscous
  regularization.
\newblock {\em Discrete and Continuous Dynamical Systems Series S},
  10(6):1467--1485, 2017.

\bibitem{susu_dam}
L.M. Susu.
\newblock Optimal control of a viscous two-field gradient damage model.
\newblock {\em GAMM-Mitt.}, 40(4):287 -- 311, 2018.

\bibitem{oc_sof}
C.~Tao and M.~Sofonea.
\newblock A history-dependent inclusion with applications in contact mechanics.
\newblock {\em Numer. Funct. Anal. Optim.}, 43(5):497--521, 2022.

\bibitem{tiba_hyp}
D.~Tiba.
\newblock Optimal control of hyperbolic variational inequalities.
\newblock In {\em Nondifferentiable optimization: {M}otivations and
  applications}, volume 255 of {\em Lect.\ Notes Econ. and Math. Systems}.
  Springer, Berlin, 1985.
\newblock Eds. V.F.Demyanov and D.Pallaschke.

\bibitem{tiba}
D.~Tiba.
\newblock {\em Optimal control of nonsmooth distributed parameter systems}.
\newblock Springer, 1990.

\bibitem{wachs}
G.~Wachsmuth.
\newblock A guided tour of polyhedric sets.
\newblock {\em Journal of Convex Analysis}, 26(1), 2019.

\bibitem{e_qvi}
G.~Wachsmuth.
\newblock Elliptic quasi-variational inequalities under a smallness assumption:
  uniqueness, differential stability and optimal control.
\newblock {\em Calc. Var. Partial Differential Equations}, 59(2):Paper No. 82,
  15, 2020.

\end{thebibliography}

\end{document}